\documentclass[14pt,dvipsnames]{extarticle}

\usepackage[left=2cm,top=2cm,right=2cm,bottom=20mm]{geometry}
\usepackage[T1]{fontenc}
\usepackage[utf8]{inputenc}
\usepackage[cmintegrals,cmbraces]{newtxmath}
\usepackage[lining]{ebgaramond}
\usepackage{ebgaramond-maths}
\usepackage{euscript}
\usepackage{mathtools}
\makeatletter
\DeclareSymbolFont{ntxletters}{OML}{ntxmi}{m}{it}
\SetSymbolFont{ntxletters}{bold}{OML}{ntxmi}{b}{it}
\re@DeclareMathSymbol{\partial}{\mathord}{ntxletters}{"40}
\makeatother
\usepackage{graphicx}
\usepackage[figurename=Fig.]{caption}
\usepackage[tracking=true,letterspace=50]{microtype}
\SetTracking{encoding=T1,shape=sc}{50}

\usepackage{amsthm}
\usepackage{array,booktabs}
\usepackage{enumitem}
\usepackage{xcolor}
\usepackage{tocloft}
\usepackage{csquotes}

\usepackage[explicit]{titlesec}
\usepackage{pgfornament}

\newcommand{\sectionflourish}{%
  \pgfornament[width=3cm]{87}%
}

\titleformat{\section}[block]
  {\normalfont\Large\bfseries\filcenter} 
  {}
  {0pt}
  {\thesection.\hspace{0.45em}#1}
  [\vspace{-0.15em}\sectionflourish]

\titleformat{name=\section,numberless}[block]
  {\normalfont\Large\bfseries\filcenter}
  {}
  {0pt}
  {#1}
  [\vspace{-0.15em}\sectionflourish]

\titlespacing*{\section}
  {0pt}
  {3ex plus 0.3ex minus 0.2ex}
  {2.5ex}
  
  \usepackage{indentfirst}

\usepackage{hyperref}

\usepackage{xurl}

\usepackage[
 backend=biber,
 style=alphabetic,
 sorting=nyt,
 giveninits=true,
 maxbibnames=99,
 doi=true,
 url=true,
 isbn=false,
 eprint=true
]{biblatex}
\AtBeginBibliography{\normalsize\raggedright}
\renewbibmacro{in:}{}

\definecolor{paperred}{RGB}{125,18,18}
\hypersetup{
 colorlinks=true,
 linkcolor={red!50!black},
 citecolor={blue!50!black},
 urlcolor={blue!80!black},
 pdfauthor={Victor Buchstaber and Mikhail Kornev},
 pdftitle={Logarithm of the Universal Two-Valued Formal Group}
}

\setlist{nosep}
\allowdisplaybreaks
\numberwithin{equation}{subsection}
\renewcommand{\theequation}{%
  \ifnum\value{subsection}>0 \thesubsection\else\thesection\fi.\arabic{equation}%
}

\usepackage{aliascnt}
\usepackage[nameinlink,capitalise,noabbrev]{cleveref}
\newtheoremstyle{plain}
  {\topsep}{\topsep}{\itshape}{}{\bfseries}{.}
  {5pt plus 1pt minus 1pt}
  {\thmname{#1}\thmnumber{ \textup{#2}}\thmnote{ {\fontseries{\mddefault}\upshape #3}}}
\newtheoremstyle{definition}
  {\topsep}{\topsep}{\normalfont}{}{\bfseries}{.}
  {5pt plus 1pt minus 1pt}
  {\thmname{#1}\thmnumber{ \textup{#2}}\thmnote{ {\fontseries{\mddefault}\upshape #3}}}
\theoremstyle{plain}
\newaliascnt{theorem}{equation}
\newtheorem{theorem}[theorem]{Theorem}
\aliascntresetthe{theorem}
\newaliascnt{proposition}{equation}
\newtheorem{proposition}[proposition]{Proposition}
\aliascntresetthe{proposition}
\newaliascnt{lemma}{equation}
\newtheorem{lemma}[lemma]{Lemma}
\aliascntresetthe{lemma}
\newaliascnt{corollary}{equation}
\newtheorem{corollary}[corollary]{Corollary}
\aliascntresetthe{corollary}
\theoremstyle{definition}
\newaliascnt{definition}{equation}

\aliascntresetthe{definition}
\newaliascnt{example}{equation}
\newtheorem{example}[example]{Example}
\aliascntresetthe{example}
\newaliascnt{problem}{equation}

\aliascntresetthe{problem}
\theoremstyle{definition}
\newaliascnt{remark}{equation}
\newtheorem{remark}[remark]{Remark}
\aliascntresetthe{remark}

\makeatletter
\renewenvironment{proof}[1][\proofname]{
 \par\pushQED{\qed}\normalfont
 \topsep6\p@\@plus6\p@\relax
 \trivlist\item[\hskip\labelsep\bfseries\itshape #1\@addpunct{.}]
}{\popQED\endtrivlist\@endpefalse}
\makeatother

\let\phi\varphi

\DeclareMathOperator*{\res}{res}
\DeclareMathOperator{\lcm}{lcm}

\DeclareMathOperator{\ord}{ord}
\DeclareMathOperator{\Td}{Td}
\DeclareMathOperator{\Och}{Och}
\DeclareMathOperator{\Tors}{Tors}
\DeclareMathOperator{\pr}{pr}
\newcommand{\dd}{\mathop{}\!\mathrm d}
\newcommand{\vp}{\nu_p}
\newcommand{\OmU}{\Omega_{\mathrm U}^{\ast}}
\newcommand{\OmSp}{\Omega_{\mathrm{Sp}}^{\ast}}
\newcommand{\OmSpfree}{\widetilde{\Omega}_{\mathrm{Sp}}}

\newcommand{\U}{\mathrm U}
\newcommand{\MSp}{\mathrm{MSp}}
\newcommand{\CP}{\mathbb{C}\mathrm P}
\newcommand{\HP}{\mathbb{H}\mathrm P}
\newcommand{\Sp}{\mathrm{Sp}}

\newcommand{\Ii}{\mathcal I}
\newcommand{\Ll}{\mathcal L\mkern1mu}
\newcommand{\Uu}{\mathrm{U}}

\newcommand{\Bc}{\mathrm{Bc}}
\newcommand{\St}{\mathrm{St}}
\newcommand{\LambdaSt}{\Lambda_{\mathrm{St}}}
\newcommand{\Q}{\mathbb{Q}}

\newcommand{\Ztwo}{\mathbb{Z}_{(2)}}

\title{\bfseries Logarithm of the Universal\\ Two-Valued Formal Group}
\author{Victor Buchstaber and Mikhail Kornev}
\date{}
\begin{document}

\maketitle

\begin{abstract}
We solve the long-standing problem of determining the exact denominators of the coefficients of the logarithm $B(x)=x+\sum_{n\geq1}b_nx^{n+1}$, obtained from the universal formal group of complex cobordism by the modulus square construction:
$$
b_n=\frac{C_n}{d_n},\qquad
d_n=\frac{n+1}{2}\operatorname{lcm}(1,\ldots,2n+2),
$$
where $C_n\in\Omega_{\mathrm U}^{-4n}$ is primitive and undecomposable. We prove that $C_n$ belongs to the coefficient ring $\Lambda$ of the universal two-valued law and to the subring $\Lambda_{\mathrm{St}}$ generated by quotient Stong manifolds, and give an explicit integral Stong manifold formula for it. The rational lift of $b_n$ to quaternionic cobordism modulo torsion has exact denominator $2d_n$. Consequently, $C_n$ has no integral quaternionic lift, whereas $2C_n$ does. Every Chern number of $C_n$ is divisible by $d_n$, and $c_{2n}(C_n)=d_n$.

We introduce odd genera $\mathrm{Bc}_N$. For $N\geq n$, $\mathrm{Bc}_N$ detects the odd part of $d_n$. Its restriction to the Stong ring is integral exactly for $N\leq3$, while the universal odd genus $\mathrm{Bc}_{\infty}$ takes values there with only odd denominators. For every $n\geq5$, we construct undecomposable classes in $\Lambda^{-4n}$ with equal Ochanine genera and top Chern numbers but distinct $\mathrm{Bc}_3$ values. Finally, the oriented extension of $\mathrm{Bc}_3$ is integral on closed spin manifolds of real dimension below $24$ and on closed string manifolds of real dimension at most $24$. The spin bound is sharp: an Anderson--Brown--Peterson spin $24$-manifold has nonintegral $\mathrm{Bc}_3$ genus.
\end{abstract}

\thispagestyle{empty}

\newpage

\begin{center}
  \pgfornament[width=3cm]{87}
\end{center}

\setcounter{tocdepth}{2}
\tableofcontents
\thispagestyle{empty}
\newpage

\section{Introduction}

One of the best-known problems in complex cobordism theory concerns the
relations among the Chern characteristic numbers of stably complex manifolds.
The Milnor--Novikov theorem shows that these numbers determine a complex
cobordism class, while the Hattori--Stong theorem gives a complete system of
$\mathrm{K}$-theoretic integrality conditions distinguishing the integral complex
cobordism group from its rationalization
\cite{Mil60,Nov60,Nov62,Hat66,Sto65}. This problem is closely tied to two
classical structural questions: to determine the coefficient ring
$\Omega_{\mathrm U}^{\ast}$ and to describe the formal group law associated
with the $\mathrm{MU}$-theoretic\footnote{For a generalized cohomology theory
represented by a Thom spectrum $\mathrm{MG}$, we omit the letter $\mathrm M$
from the notation and write $\mathrm{G}^{*}(X)$ instead of
$\mathrm{MG}^{*}(X)$. For example, we write $\Uu^{*}(X)$ instead of
$\mathrm{MU}^{*}(X)$.} first Chern class $c_1^{\mathrm U}$. Lazard had shown
that the coefficient ring of the universal one-dimensional commutative formal
group law is a polynomial ring over $\mathbb Z$ \cite{Laz55}. Milnor and
Novikov obtained a similar polynomial description for
$\Omega_{\mathrm U}^{\ast}$ \cite{Mil60,Nov60,Nov62,Nov67}.

In the topology section of the 1966 International Congress of Mathematicians in Moscow, Novikov outlined a programme for the development of algebraic topology based on complex cobordism. During his visit to the United States in the spring of 1967, he presented a substantially expanded version of this programme, including applications of the formal group law arising in geometric cobordism. At a Harvard seminar attended by Quillen, Tate remarked after Novikov's lecture that he had never imagined algebraic topology to lie so close to his own mathematical interests \cite{Nov11}. Two years later, Quillen proved that the formal group law of complex cobordism is universal, or equivalently that the canonical classifying homomorphism from the Lazard ring to $\Omega_{\mathrm U}^{\ast}$ is an isomorphism \cite{Qui69}. This theorem explained the striking agreement between the polynomial descriptions that had arisen independently in formal group theory and in the computation of the complex cobordism ring. Novikov's study of geometric cobordism and Buchstaber's construction of the Chern--Dold character then provided effective topological tools for explicit calculations with the universal formal group law \cite{Nov60,Nov62,Nov67,Buc70}.

The classical realization question underlying this circle of ideas is the
Milnor--Hirzebruch problem, first formulated by Hirzebruch in his plenary
address at the 1958 International Congress of Mathematicians
\cite{Hir58}. Its algebro-geometric form asks which collections of the
$p(n)=|\mathcal P(n)|$ integers
\[
 c_I,\qquad I=(i_1,\ldots,i_r)\in\mathcal P(n),
\]
where $\mathcal P(n)$ is the set of partitions of $n$, can occur as the
ordinary tangent Chern numbers
\[
 c_I[M^{2n}]
 =\left\langle c_{i_1}(\tau_{M^{2n}})\cdots c_{i_r}(\tau_{M^{2n}}),[M^{2n}]\right\rangle
\]
of a smooth irreducible complex projective variety $M^{2n}$.\footnote{Equivalently,
the same realization question may be posed using ordinary tangent Chern
numbers, monomial symmetric tangent Chern numbers, or monomial symmetric
normal Chern numbers.} This realization problem remains largely
open. The Hirzebruch--Riemann--Roch theorem and its Grothendieck
generalization yield arithmetic congruences that every realizable collection
must satisfy, but they do not give a complete characterization.

Mishchenko found the following geometric expansion for the logarithm
of the universal formal group
\begin{equation}\label{eq:intro-mishchenko}
 g(u)=u+\sum_{n\geq1}\frac{[\CP^n]}{n+1}u^{n+1}.
\end{equation}
It is recorded in Appendix~1 of \cite{Nov67}. Buchstaber proved in
\cite[Theorem~2.3]{Buc70} that the exponential $f=g^{-1}$ of the universal formal group
has the Hurwitz expansion
\begin{equation}\label{eq:intro-theta-exponential}
 f(z)=z+\sum_{n\geq1}\frac{[M^{2n}]}{(n+1)!}z^{n+1}
\end{equation}
for certain stably complex manifolds $M^{2n}$ of real dimension $2n$.
Much later, Buchstaber and Veselov found smooth irreducible algebraic
representatives for these classes:
\[
 [M^{2n}]=[\Theta^n].
\]
Here \(\Theta^n\) is a smooth theta divisor of complex dimension \(n\) on a
general principally polarized abelian variety of dimension \(n+1\)
\cite[Theorem~1.1]{BV24}.

We distinguish primitivity, irreducibility, and undecomposability. A nonzero
element of a torsion-free abelian group is \emph{primitive} if it is not
divisible by any integer greater than one. A nonzero homogeneous class
$X\in\OmU$ of negative degree is \emph{irreducible} if
every factorization $X=YZ$ in $\OmU$ has a unit factor. Let
$\mathfrak a=\bigoplus_{n>0}\Omega_{\Uu}^{-2n}$ be the augmentation ideal.
We call $X$ \emph{undecomposable} if $X\notin\mathfrak a^2$, that is, if it is
not a finite sum of products of positive-dimensional cobordism classes. This
means that its image in $\mathfrak a/\mathfrak a^2$ is nonzero, not necessarily
primitive. In the graded polynomial ring $\OmU$, primitivity together with
undecomposability implies irreducibility. We say that a nonzero element \(W\in\Omega_U^*\otimes\mathbb Q\) has an exact denominator \(k\in\mathbb Z_{>0}\) if \(kW\in\Omega_U^*\) is primitive in the underlying additive group. It is clear that not every element admits an exact denominator (for example, \(\frac23[\mathbb CP^1]\) does not).

For every $n\geq1$, the classes $[\CP^n]$ and $[\Theta^n]$ are primitive
and undecomposable in $\Omega_{\Uu}^{-2n}$: their Todd genera are $1$ and
$(-1)^n$, respectively, while their Milnor numbers are $n+1$ and
$-(n+1)!$, respectively.

The passage from \eqref{eq:intro-theta-exponential} to
\eqref{eq:intro-mishchenko} is formally an inversion of power series, but its
arithmetic content is stronger than the general inversion theorem for Hurwitz
series. Recall that a strict Hurwitz series has the form
\[
 z+\sum_{n\geq1}a_n\frac{z^{n+1}}{(n+1)!},
\]
and its compositional inverse is again a Hurwitz series over the same
coefficient ring. This gives only the a priori factorial denominator bound
\((n+1)!\) for the coefficient of \(z^{n+1}\). In contrast,
\eqref{eq:intro-mishchenko} has denominator only \(n+1\). The Lagrange
inversion formula displays the cancellations responsible for this sharper bound:
\begin{equation}\label{eq:intro-lagrange}
 [\CP^n]
 =(n+1)\Ll_n\left(
 \frac{[\Theta^1]}{2!},\ldots,
 \frac{[\Theta^n]}{(n+1)!}
 \right),
\end{equation}
where \(\Ll_n\) is the \(n\)th Lagrange inversion polynomial
\cite{BV26}. More generally, Buchstaber and Veselov expressed every complex
cobordism class in terms of products of theta divisors. For any stably
complex manifold $X^{2n}$,
\begin{equation}\label{eq:intro-BV-normal-Chern}
 [X^{2n}]
 =\sum_{\lambda\vdash n}
 m_\lambda^\nu(X^{2n})\frac{[\Theta^\lambda]}{(\lambda+1)!},
\end{equation}
where, for $\lambda=(i_1,\ldots,i_k)$,
\[
 \Theta^\lambda=\Theta^{i_1}\times\cdots\times\Theta^{i_k},
 \qquad
 (\lambda+1)!=\prod_{j=1}^k(i_j+1)!,
\]
and $m_\lambda^\nu(X^{2n})$ is the monomial symmetric Chern number of the
stable normal bundle \cite[formula~(6)]{BV24}. The cancellation of the factor \(n!\) is therefore a genuine
divisibility statement in the integral complex cobordism ring.

Hirzebruch genera and formal groups provide effective methods for proving such
divisibility results. An integral genus converts divisibility of cobordism
classes into divisibility of characteristic numbers. A suitable genus can also
detect that a class is primitive. Classical divisibility theorems for the
Euler characteristic and the signature fit naturally into the same conceptual framework
\cite{Hir66}. A basic example is Ochanine's theorem that the signature of every
closed spin manifold of real dimension \(8k+4\) is divisible by \(16\). This
generalizes Rokhlin's theorem in dimension four and is proved by the methods of
Hirzebruch genera and formal groups \cite{Rok52,Och81}.

The two-valued formal group used in this paper comes from the
\emph{modulus square construction} of Buchstaber and Novikov \cite{BN71}.
We first recall its topological origin. Let $\zeta$ be the universal
quaternionic line bundle over $\HP^\infty$, let
$\zeta_i=\pr_i^*\zeta$ on $\HP^\infty\times\HP^\infty$, and let
$\iota\colon\CP^\infty\to\HP^\infty$ be induced by the inclusion
$S^1\subset\Sp(1)$. Let
$\rho\colon\Sp^*(-)\to\U^*(-)$ be the transformation that forgets the
quaternionic structure. If $\eta$ is the universal complex line bundle and
$\eta_i=\pr_i^*\eta$ on $\CP^\infty\times\CP^\infty$, then
$(\iota\times\iota)^*\zeta_i\cong\eta_i\oplus\bar\eta_i$ as complex bundles.

The complex tensor product $\zeta_1\otimes_{\mathbb C}\zeta_2$ is not a
quaternionic bundle. After pullback along $\iota\times\iota$, however, its underlying complex
bundle splits as
\[
 (\iota\times\iota)^*(\zeta_1\otimes_{\mathbb C}\zeta_2)
 \cong \xi_+\oplus\xi_-,
\]
where
\[
 \xi_+=\eta_1\eta_2\oplus\bar\eta_1\bar\eta_2,
 \qquad
 \xi_-=\eta_1\bar\eta_2\oplus\bar\eta_1\eta_2.
\]
Each $\xi_\pm$ has the form $L\oplus\bar L$ and carries its standard
quaternionic structure.

Let $F(u,v)$ be the universal formal group law in complex cobordism, put
\[
 u=c_1^{\U}(\eta_1),
 \quad \bar u=c_1^{\U}(\bar\eta_1),
 \qquad
 v=c_1^{\U}(\eta_2),
 \quad \bar v=c_1^{\U}(\bar\eta_2),
\]
and let $p_i^{\Sp}$ denote the Buchstaber--Novikov symplectic Pontryagin
classes. We use the normalization
$\rho\bigl(p_1^{\Sp}(L\oplus\bar L)\bigr)=c_2^{\U}(L\oplus\bar L)$.
For the universal bundle put
\[
 x_{\mathbb C}:=\rho\bigl(p_1^{\Sp}(\zeta)\bigr)
 =c_2^{\U}(\zeta_{\mathbb C})\in\U^4(\HP^\infty),
\]
where $\zeta_{\mathbb C}$ is the underlying complex bundle of rank two
associated with $\zeta$. Its restriction along $\iota$ satisfies
\[
 \begin{aligned}
 x:=\iota^*x_{\mathbb C}
 &=\iota^*c_2^{\U}(\zeta_{\mathbb C})
 =c_1^{\U}(\eta)c_1^{\U}(\bar\eta)
 \in\U^4(\CP^\infty),\\
 \iota^*c_1^{\U}(\zeta_{\mathbb C})
 &=c_1^{\U}(\eta)+c_1^{\U}(\bar\eta)\in\U^2(\CP^\infty).
 \end{aligned}
\]
By abuse of notation, on $\CP^\infty\times\CP^\infty$ we suppress the
projection pullbacks and write
$x=\pr_1^*(\iota^*x_{\mathbb C})$ and
$y=\pr_2^*(\iota^*x_{\mathbb C})$.
Then
\[
 \begin{aligned}
 x&=\rho\bigl(p_1^{\Sp}((\iota\times\iota)^*\zeta_1)\bigr)=u\bar u,
 &
 y&=\rho\bigl(p_1^{\Sp}((\iota\times\iota)^*\zeta_2)\bigr)=v\bar v,\\
 z_+&=\rho\bigl(p_1^{\Sp}(\xi_+)\bigr)
     =F(u,v)F(\bar u,\bar v),
 &
 z_-&=\rho\bigl(p_1^{\Sp}(\xi_-)\bigr)
     =F(u,\bar v)F(\bar u,v).
 \end{aligned}
\]
The Whitney formula now gives
\[
 \begin{aligned}
 \Theta_1(x,y)
  &=\rho\bigl(p_1^{\Sp}(\xi_+\oplus\xi_-)\bigr)=z_++z_-,\\
 \Theta_2(x,y)
  &=\rho\bigl(p_2^{\Sp}(\xi_+\oplus\xi_-)\bigr)=z_+z_-.
 \end{aligned}
\]
Thus $z_+$ and $z_-$ are the roots of the two-valued law
\begin{equation}\label{universal_two_valued_formal_group}
 z^2-\Theta_1(x,y)z+\Theta_2(x,y)=0.
\end{equation}
These formulas are the modulus square construction: they pass from the
one-valued law $F(u,v)$ to a two-valued law in the coordinates
$x=u\bar u$ and $y=v\bar v$. The elementary symmetric functions of the two
values $z_{+}$ and $z_{-}$ belong to $\Omega_{\mathrm U}^{\ast}[[x,y]]$, see \cite{BN71}. Every
one-dimensional commutative formal group law $F_A$ is classified by a homomorphism
$\varphi\colon\Omega_{\mathrm U}^{\ast}\to A$, and the modulus square
construction commutes with this coefficient change. Hence $\varphi$ carries
\eqref{universal_two_valued_formal_group} to the two-valued formal group
obtained from $F_A$. Thus every two-valued formal group arising from the
modulus square construction is a specialization of
\eqref{universal_two_valued_formal_group}. Buchstaber proved that, over the
subring generated by its coefficients, the law \eqref{universal_two_valued_formal_group} is isomorphic to the universal
two-valued formal group of the first type \cite[Theorems~2.3--2.4]{Buc78}.
We therefore call it the universal two-valued law. Its algebraic theory and
applications to cobordism were developed in \cite{Buc75,Buc76,Buc78}.
See also \cite{Buc06}.

Let \(g(u)\), \(f(u)\), and \(\bar u\) be the logarithm, exponential, and
formal inverse of \(F(u,v)\). Put
\(x=u\bar u\). Buchstaber and Novikov defined the logarithm \(B(x)\) of the
associated universal two-valued formal group by
\[
 B(x)=-g(u)^2.
\]
Let
\[
h:=c_2(\zeta_{\mathbb C})
 \in \mathrm H^4(\HP^\infty;\mathbb Z).
\]
The Chern--Dold character in symplectic cobordism is the natural
transformation \cite{Buc70}
\[
 \operatorname{ch}_{\Sp}\colon
 \Sp^*(X)\longrightarrow
 \mathrm H^*\!\left(X;\OmSp\otimes\mathbb Q\right).
\]
Here $\operatorname{ch}_{\U}$ is the Chern--Dold character in complex
cobordism introduced and explicitly calculated by Buchstaber in
\cite{Buc70}.
Applying the coefficient homomorphism induced by $\rho$ and using compatibility
of the Chern--Dold characters with $\rho$, Buchstaber and Novikov obtain
\cite[Lemma~2.16]{BN71}
\[
 (\mathrm{id}\otimes\rho)\operatorname{ch}_{\Sp}
 \bigl(p_1^{\Sp}(\zeta)\bigr)
 =\operatorname{ch}_{\U}(x_{\mathbb C})
 =B^{-1}(h).
\]
Here $\iota^*h=-z^2$ for $z=c_1(\eta)$, so restriction along $\iota$ gives
$$\operatorname{ch}_{\U}(x)=B^{-1}(-z^2)=f(z)f(-z).$$

The coefficients of \(B^{-1}(h)\) have canonical geometric numerators.
Buchstaber constructed classes \(D_n\in\Omega_{\Uu}^{-4n}\) satisfying
\begin{equation}\label{eq:intro-two-valued-exponential}
 B^{-1}(h)
 =h+\sum_{n\geq1}
 \frac{2(-1)^{n+1}D_n}{(2n+2)!}h^{n+1},
 \qquad
 \Td(D_n)=-1.
\end{equation}
Here \(\Td\) is the Todd genus whose exponential is
\(f_{\Td}(u)=1-e^{-u}\). Thus every \(D_n\) is primitive and undecomposable:
the Todd genus gives primitivity, while the Milnor number $s_{2n}$ vanishes on decomposable
classes and
\[
 s_{2n}(D_n)=(2n+2)!\ne0.
\]
The exact denominator of \([h^{n+1}]B^{-1}(h)\) is \((2n+2)!/2\)
\cite[Theorem~2.10]{Buc78}.

We recall the geometric representatives used in this result. For a tuple
$\omega=(n_1,\ldots,n_{2s})$ of nonnegative integers of even length, define the Cartesian product
\[
\CP_*^\omega
=
\prod_{\ell=1}^{2s}\CP_*^{2n_\ell+1},
\]
where \(\CP_*^{2n+1}\) denotes \(\CP^{2n+1}\) with Stong's stable
\(\Sp\)-structure
\[
\tau_{\CP_*^{2n+1}}\oplus\underline{\mathbb R}^{2}
\cong
(n+1)(\xi\oplus\bar\xi).
\]
Here \(\xi\to\CP^{2n+1}\) is the canonical complex line bundle. Let
\(\xi_\ell=\pr_\ell^{\ast}\xi\) on \(\CP_*^\omega\), and put
\[
\lambda_\omega
=
\xi_1\otimes\cdots\otimes\xi_{2s},
\qquad
\zeta_\omega
=
\lambda_\omega\oplus\bar\lambda_\omega.
\]
The bundle \(\zeta_\omega\) is a quaternionic line bundle. Explicitly, it has
the standard antilinear endomorphism
\(J(v,w)=(-\bar w,\bar v)\), which satisfies \(J^2=-1\). For
\(1\leq r\leq\sum_{\ell=1}^{2s}n_\ell+s\), the Stong manifold
\(M(\omega;r)\) is the zero locus of a generic transverse section of
\(\zeta_\omega^{\oplus r}\). Equivalently,
its fundamental class is Poincar\'e dual to
\(e(\zeta_\omega^{\oplus r})=c_{2r}(\zeta_\omega^{\oplus r})\). The tangent--normal bundle isomorphism
\[
 \tau_{M(\omega;r)}
 \oplus
 \left.\zeta_\omega^{\oplus r}\right|_{M(\omega;r)}
 \cong
 \left.\tau_{\CP_*^\omega}\right|_{M(\omega;r)}
\]
induces a stable \(\Sp\)-structure. The real dimension of \(M(\omega;r)\) is
\[
\dim_{\mathbb R}M(\omega;r)
=
4\left(\sum_{\ell=1}^{2s}n_\ell+s-r\right).
\]
The manifolds \(M(\omega;1)\) were introduced by Stong, who proved that their
classes multiplicatively generate \(\OmSp[1/2]\) \cite{Sto67}. Buchstaber
introduced and studied the manifolds \(M(\omega;r)\) for arbitrary positive
\(r\) \cite[\S~5, before Theorem~5.12]{Buc78}.

For every $1\leq r\leq\sum_{\ell=1}^{2s}n_\ell+s$,
Nadiradze constructed a transverse representative of this Stong cobordism
class that is invariant under a free involution $\Ii$ and whose quotient
\[
 M(\omega;r;\Ii)=M(\omega;r)/\Ii
\]
carries a self-conjugate stable complex structure \cite[pp.~264--265]{Nad80}. The notation
$M(\omega;r;\Ii)$ will always refer to such an invariant representative. The
quotient map is a two-sheeted covering compatible with the stable complex
structures, and Nadiradze's covering formula gives
\[
 [M(\omega;r)]=2[M(\omega;r;\Ii)]
 \qquad\text{in }\OmU.
\]

Let $\LambdaSt\subset\OmU$ be the subring generated by the classes
$[M(\omega;r;\Ii)]$. Buchstaber proved algebraically that the subring generated
by the halves $\tfrac12[M(\omega;r)]$ is already generated by
\[
 \frac12[M(k,l;q)],
 \qquad k,l\geq0,
 \qquad q=1,2,3
\]
\cite[Theorem~5.13]{Buc78}. Nadiradze's formula identifies these halves with the quotient
classes. Consequently, $\LambdaSt$ is generated by
\[
 [M(k,l;q;\Ii)],
 \qquad k,l\geq0,
 \qquad q=1,2,3.
\]

The zero tuple $0^{2n+2} = (0,\ldots, 0)$ gives the geometric representatives of $D_n$ in \eqref{eq:intro-two-valued-exponential}.
Indeed, put $\alpha=\alpha_1+\cdots+\alpha_{2n+2}$, where
$\alpha_i=c_1(\xi_i)$
on $A = (\CP_*^1)^{2n+2}$. The stable Chern classes of this product vanish.
Applying the universal characteristic series $Q(\alpha)=\alpha/f(\alpha)$ to the
tangent--normal formula and denoting $E=\lambda_{\omega_0}\oplus\overline{\lambda}_{\omega_0}$, $\omega_0 = 0^{2n+2}$ therefore gives
\[
[M(0^{2n+2};1)]
=
\left\langle
Q(\tau_A)\frac{c_2(E)}{Q(E)},
[A]
\right\rangle
=
\left\langle
\frac{-\alpha^2}{Q(\alpha)Q(-\alpha)},
[A]
\right\rangle
=
\left\langle f(\alpha)f(-\alpha),[A]\right\rangle.
\]
The last equality follows from the formula
$B^{-1}(-\alpha^2)=f(\alpha)f(-\alpha)$.
Nadiradze's covering formula then gives
\[
[M(0^{2n+2};1)]=2[M(0^{2n+2};1;\Ii)] = 2D_n,
\]
where $0^{2n+2}=(0,\ldots,0)$ has length $2n+2$.
Thus both the geometric numerators and the exact denominators of the universal
two-valued exponential $B^{-1}(h)$ are known.

In particular, $D_n=[M(0^{2n+2};1;\Ii)]$ belongs to $\LambdaSt$.
Recall that the Stong ring $\St\subset\OmU$ is characterized by the
vanishing of ordinary tangent Chern numbers\footnote{This criterion concerns
products of ordinary Chern classes, not the monomial symmetric numbers
$m_\lambda[X]=\langle m_\lambda(x_1,\ldots),[X]\rangle$. In particular,
$s_k=m_{(k)}$, whereas $c_k=m_{(1^k)}$.}
\[
 c_I[X]=\left\langle c_{i_1}\cdots c_{i_r},[X]\right\rangle=0
 \quad\text{whenever some }i_\nu\text{ is odd}.
\]
Here $I$ ranges over partitions of the complex dimension of $X$. Equivalently, if
$j\colon\OmSp/\Tors\to\OmU$ forgets the stable $\Sp$-structure, then
$\St$ is the $2$-primary saturation of $j(\OmSp/\Tors)$ in $\OmU$
\cite[Theorem~4.1 and Lemma~6.6]{Buc78}. Moreover, $\LambdaSt\subset\St$.

The coefficient rings associated with the universal two-valued law fit into a
useful hierarchy. Put
\[
 \Lambda
 =\mathbb Z[\text{coefficients of }\Theta_1\text{ and }
                  \text{coefficients of }\Theta_2]
 \subset\OmU
\]
and
\[
 \widetilde\Lambda
 =\mathbb Z\left[
    \text{coefficients of }\frac{\Theta_1}{2}\text{ and }
    \text{coefficients of }\Theta_2
  \right]
 \subset\OmU\!\left[\frac12\right].
\]
The latter is the ring denoted $\Lambda_1$ in the proof of
\cite[Theorem~4.4]{Buc78}.\footnote{Buchstaber denotes our rings $\St$ and
$\LambdaSt$ by a script $L$ and $\widetilde\Lambda$, respectively. Our $\widetilde\Lambda$ also differs from
the ring denoted by that symbol in \cite{Buc78}.}
If $I_{\Sp}=\operatorname{Im}(\OmSp\to\OmU)$, then Buchstaber's results give
\begin{equation}\label{eq:intro-ring-hierarchy}
\begin{aligned}
 \Lambda\subset\LambdaSt\subset\St
 &=(\widetilde\Lambda\cap\OmU)
 =\operatorname{Sat}_2(I_{\Sp}),\\
 \operatorname{Sat}_2(I_{\Sp})
 &=\{x\in\OmU\mid 2^rx\in I_{\Sp}\text{ for some }r\geq0\}.
\end{aligned}
\end{equation}
Indeed, $\St$ is a direct summand of $\OmU$ and
$\St[1/2]=I_{\Sp}[1/2]$ by \cite[Theorem~4.1 and Lemma~6.6]{Buc78}.
Theorem~4.4 of the same paper gives
$\St\subset\widetilde\Lambda\subset\St[1/2]$.
Intersecting with $\OmU$ yields both equalities in
\eqref{eq:intro-ring-hierarchy}.
Moreover, $\LambdaSt$ is generated by the coefficients of the three
series $H_1,H_2,H_3$ associated with the generalized shift (which we discuss
below). Thus $\Lambda$
records the integral coefficients of the universal two-valued law,
$\widetilde\Lambda$ controls the corresponding $2$-primary saturation, and
$\LambdaSt$ has a direct geometric description by quotient Stong manifolds.

The present paper treats the corresponding arithmetic problem for the
logarithm
\[
B(x)=x+\sum_{n\geq1}b_nx^{n+1}.
\]
Our main theorems,
\cref{thm:main,thm:stong-numerators,thm:sp-denominator,thm:Cn-Lambda}, show that for each
$n\geq1$,
\begin{equation}\label{denominator_d_n}
b_n=\frac{C_n}{d_n},
\qquad
d_n= \cfrac{n+1}{2} \lcm(1,2,\ldots,2n+2),
\end{equation}
where
\[
 C_n\in\Lambda^{-4n}\subset\LambdaSt^{-4n}\subset\St^{-4n}
 \subset\Omega_{\Uu}^{-4n}
\]
is primitive and undecomposable. More precisely, we compute the Milnor number
\[
 s_{2n}(C_n)=2(-1)^n d_n\ne0.
\]
Here $s_m$ is the characteristic number associated with the power sum
$\sum_i x_i^m$ in the tangent Chern roots.
Since $s_{2n}$ vanishes on $(\mathfrak a^2)^{-4n}$, its nonzero value proves
undecomposability, even after rationalization. Together with primitivity,
this also proves that every $C_n$ is irreducible in $\OmU$.

The assertion of primitivity is a substantive part of the theorem and is not
implied by the integrality of $C_n=d_nb_n$.
The argument for the upper bound only shows that $d_nb_n$ is an integral cobordism
class. A priori, this class could still be divisible by an integer greater
than one. The
proof of the exact denominator is divided into an integral upper bound and a
lower bound detected by a genus. The upper bound
follows from the Mishchenko formula, a ramified residue calculation, and a
parity property of the series $q(u)=-\bar u/u$. For the lower bound, we use
the integral index genus $\Phi_t$, recalled below, to detect the primitivity
of $C_n$.

It is useful to place this argument in relation to the Hattori--Stong theorem
\cite{Hat66,Sto65}. We first recall the relevant $\gamma$-operations. For
$E\in\mathrm{K}^0(X)$, put
\[
 \lambda_t(E)=\sum_{j\geq0}\lambda^j(E)t^j,
 \qquad
 \gamma_t(E)
 =\lambda_{t/(1-t)}(E)
 =\sum_{j\geq0}\gamma^j(E)t^j.
\]
Here the operations defined by exterior powers satisfy
\[
 \lambda^j([V])=\left[\bigwedge\nolimits^j V\right]
\]
for a complex vector bundle $V$ and are extended to virtual classes by
\[
 \lambda_t([V]-[W])
 =\frac{\lambda_t([V])}{\lambda_t([W])}.
\]
In particular, for a complex line bundle $L$,
\[
 \gamma_t([L]-1)=1+t([L]-1).
\]
If $M$ is a stably complex manifold of complex dimension $m$, let
\[
 \widetilde\tau_M=\tau_M-\mathbb C^m\in\widetilde{\mathrm{K}}^0(M).
\]
For $s\geq1$, define the multivariable Hattori--Stong polynomial
\[
 \Phi^{\mathrm{HS}}_{t_1,\ldots,t_s}(M)
 =\left\langle
   \Td(\tau_M)
   \prod_{\nu=1}^{s}
    \operatorname{ch}\gamma_{t_\nu}(\widetilde\tau_M),
   [M]
  \right\rangle.
\]
The Hattori--Stong theorem in terms of
$\Phi^{\mathrm{HS}}_{t_1,\ldots,t_s}$ states that, for
$\alpha\in\Omega_{\Uu}^{-2m}\otimes\Q$,
\begin{equation}\label{eq:intro-hattori-stong-phi}
 \alpha\in\Omega_{\Uu}^{-2m}
 \quad\Longleftrightarrow\quad
 [t_1^{i_1}\cdots t_s^{i_s}]
 \Phi^{\mathrm{HS}}_{t_1,\ldots,t_s}(\alpha)\in\mathbb Z
\end{equation}
for every $s\geq1$ and every tuple of nonnegative integers
$(i_1,\ldots,i_s)$ with $0\leq i_1+\cdots+i_s\leq m$. Thus the theorem characterizes
the subgroup $\Omega_{\Uu}^{-2m}$ inside its rationalization by the
integrality of these Hattori--Stong indices.

This criterion has an equivalent formulation. Let $\mathcal P_{\leq m}$ be the set of partitions
of weight at most $m$ (including the empty partition), and put
$A_m=\mathbb Z^{\mathcal P_{\leq m}}$.
For $\lambda=(\lambda_1,\ldots,\lambda_s)$ define
\[
 \kappa_\lambda(\alpha)
 =[t_1^{\lambda_1}\cdots t_s^{\lambda_s}]
 \Phi^{\mathrm{HS}}_{t_1,\ldots,t_s}(\alpha),
\]
with the empty partition giving the Todd genus, and let
$\kappa_m\colon \Omega_{\Uu}^{-2m}\to A_m$ collect these numbers. These indices separate
rational cobordism classes, and the Hattori--Stong theorem says precisely that
\begin{equation}\label{eq:intro-hattori-stong-saturated}
 \kappa_m(\Omega_{\Uu}^{-2m})
 =\kappa_{m,\mathbb Q}(\Omega_{\Uu}^{-2m}\otimes\mathbb Q)\cap A_m
 \qquad\text{inside }A_m\otimes\mathbb Q.
\end{equation}
Thus $\kappa_m(\Omega_{\Uu}^{-2m})$ is a saturated subgroup of the finitely generated free abelian group
$A_m$, hence a direct summand.

Buchstaber proved this direct summand formulation of the Hattori--Stong
theorem using formal group theory. Let
$\varphi(x)=x+\sum_{n\geq1}r_nx^{n+1}$ be the universal strict change of
coordinate. Applying it to the multiplicative formal group law
$x+y-\beta xy$ gives, over the polynomial $\mathbb Z$-algebra
$\mathbb Z[r_1,r_2,\ldots,\beta]$, the formal group law
\[
 F_{\varphi,\beta}(x,y)
 =\varphi^{-1}\!\left(\varphi(x)+\varphi(y)
          -\beta\varphi(x)\varphi(y)\right).
\]
He identified the corresponding classifying homomorphism from the Lazard ring
with the Hattori--Stong homomorphism and proved that its image is a direct
summand of $\mathbb Z[r_1,r_2,\ldots,\beta]$
\cite[Theorem~1.28 and \S7, Example~2]{Buc79Characteristic}.

For one variable, write
\[
 \Phi_t(M)=\Phi^{\mathrm{HS}}_t(M)
 =\sum_{j=0}^{m}\Phi_j(M)t^j.
\]
If $x_1,\ldots,x_m$ are the formal Chern roots of $\tau_M$, then the identity
for line bundles above gives
\[
 \operatorname{ch}\gamma_t(\widetilde\tau_M)
 =\prod_{i=1}^{m}\bigl(1+t(e^{x_i}-1)\bigr),
\]
and hence
\begin{equation}\label{eq:intro-Phi-roots}
 \Phi_t(M)
 =\left\langle
   \prod_{i=1}^{m}
    \frac{x_i}{1-e^{-x_i}}
    \bigl(1+t(e^{x_i}-1)\bigr),
   [M]
  \right\rangle.
\end{equation}
The full criterion \eqref{eq:intro-hattori-stong-phi} involves all
multivariable polynomials $\Phi^{\mathrm{HS}}_{t_1,\ldots,t_s}$. Its
specialization to $s=1$ is the polynomial $\Phi_t$, corresponding to a single
$\gamma$-operation.

We use this index genus for the lower bound. Its
characteristic series is
\[
 Q_{\Phi,t}(z)
 =\frac{z}{1-e^{-z}}\bigl(1+t(e^z-1)\bigr).
\]
The coefficients $\Phi_j(M)$ are integral by
\eqref{eq:intro-hattori-stong-phi}. We regard $\Phi_t$ as an ungraded genus
with values in $\mathbb Z[t]$.\footnote{Equivalently, introduce graded
variables $a,b$ with $\deg a=\deg b=-2$ and, for a stably complex manifold
$M$ of complex dimension $m$, set
$\Phi_{a,b}(M):=b^m\Phi_{a/b}(M)=\sum_{j=0}^m\Phi_j(M)a^jb^{m-j}$.
This is homogeneous of degree $-2m$ and defines a graded genus
$\Phi_{a,b}\colon\OmU\to\mathbb Z[a,b]$.} In complex dimension $m$, its
values have
degree at most $m$ in $t$. At $t=0$, it specializes to the Todd genus.

The coefficients $\Phi_j(C_n)$ of the polynomial $\Phi_t(C_n)$ are
Hattori--Stong numbers. The coefficientwise primitivity proved
below is therefore equivalent to
\[
 \gcd\bigl(\Phi_0(C_n),\Phi_1(C_n),\ldots,\Phi_{2n}(C_n)\bigr)=1.
\]
Equivalently, $d_n$ is the least common denominator of the rational numbers
\[
 \Phi_0(b_n),\Phi_1(b_n),\ldots,\Phi_{2n}(b_n).
\]
Thus this comparatively small subfamily of $\mathrm{K}$-theory indices already detects
the primitivity of $C_n$. It does not, however, replace the argument for the
upper bound: an application of the full Hattori--Stong criterion to the inclusion
$d_nb_n\in\Omega_{\Uu}^{-4n}$ would require the integrality of all coefficients
of all the multivariable polynomials
$\Phi^{\mathrm{HS}}_{t_1,\ldots,t_s}$. The Chern number theorem below shows why
the Hattori--Stong indices are needed: all ordinary Chern numbers of the rational
class $b_n$ are integral, although $b_n$ itself does not belong to the subgroup
$\Omega_{\Uu}^{-4n}\subset\Omega_{\Uu}^{-4n}\otimes\Q$.

The geometric inclusion $C_n\in\LambdaSt$ is proved by a different argument,
based on Buchstaber's generalized shift series $H_q(x,y)$. Buchstaber's
coefficient formula and Nadiradze's quotient construction give, respectively,
\[
 2H_q(x,y)
 =-\sum_{k,l\geq0}[M(k,l;q)]x^ky^l,
 \qquad
 H_q(x,y)
 =-\sum_{k,l\geq0}[M(k,l;q;\Ii)]x^ky^l.
\]
For fixed $n$, the values
\[
 P_n(q)=[x^{q-1}y^n]H_q(x,y),
 \qquad q=1,2,\ldots,
\]
extend to an odd integer-valued polynomial of degree at most $2n+1$, and a
residue calculation gives $P_n'(0)=(n+1)b_n$. An arithmetic lemma for odd
integer-valued polynomials then yields $d_nb_n\in\LambdaSt$. In fact, the
argument expresses $C_n$ as an integral linear combination of the specific
quotient Stong manifolds $M(q-1,n;q;\Ii)$, with $1\leq q\leq n+1$:
\begin{equation}\label{eq:Cn-stong-combination-intro}
 C_n=-\sum_{q=1}^{n+1}A_{n,q}[M(q-1,n;q;\Ii)],
\end{equation}
where
\[
 A_{n,q}
 =(-1)^{q+1}\frac{2d_n}{(n+1)q}
   \frac{\binom{2n+2}{n+1-q}}{\binom{2n+2}{n+1}}
 \in\mathbb Z,
 \qquad 1\leq q\leq n+1.
\]

The integers
\[
 e_n=\frac12\lcm(1,2,\ldots,2n+2)
\]
that occur in this argument are not auxiliary bookkeeping constants.
\Cref{lem:odd-integer-valued} identifies $e_n$ as the exact universal
denominator of differentiation at the origin on the group
\[
 \operatorname{Int}^{\mathrm{odd}}_{\leq2n+1}(\mathbb Z)
 =\{P\in\mathbb Q[T]\mid P(-T)=-P(T),\ 
      \deg P\leq2n+1,\ P(\mathbb Z)\subset\mathbb Z\}.
\]
More precisely,
\[
 e_n
 =\min\left\{e\geq1\ \middle|\
   eP'(0)\in\mathbb Z
   \text{ for every }P\in
   \operatorname{Int}^{\mathrm{odd}}_{\leq2n+1}(\mathbb Z)
  \right\},
\]
and the derivative is recovered integrally from the first $n+1$ positive
values by
\[
 e_nP'(0)=\sum_{q=1}^{n+1}A_{n,q}P(q).
\]
For the geometric polynomial $P_n$, one has
$P_n'(0)=(n+1)b_n$. Hence
$d_n=(n+1)e_n$. This separates the factor $n+1$ coming from Lagrange inversion
from the optimal interpolation denominator $e_n$.

The Stong manifold representation \eqref{eq:Cn-stong-combination-intro} also
gives a quaternionic refinement of the logarithm. Put
\[
 \OmSpfree^{\ast}:=\OmSp/\Tors.
\]
Novikov proved that $\Omega_{\mathrm{Sp}}^{\ast}$ has no odd-primary torsion
\cite[Theorem~4, p.~428]{Nov62}. Consequently all its torsion is $2$-primary,
and the quotient map induces a canonical isomorphism
\[
 \Omega_{\mathrm{Sp}}^{\ast}\!\left[\frac12\right]
 \xrightarrow{\ \cong\ }
 \widetilde{\Omega}_{\mathrm{Sp}}^{\ast}\!\left[\frac12\right].
\]
Let $j\colon\OmSpfree^{\ast}\longrightarrow\OmU$ be the homomorphism
induced by forgetting the stable $\Sp$-structure. Its rational extension $j_{\Q}$
identifies $\OmSpfree^{\ast}\otimes\Q$ with $\St\otimes\Q$. Since
$b_n=C_n/d_n\in\St\otimes\Q$, there is a unique quaternionic logarithm
\[
 \widetilde B(x)
 =j_{\Q}^{-1}(B(x))
 =x+\sum_{n\geq1}\widetilde b_nx^{n+1}
 \in(\OmSpfree^{\ast}\otimes\Q)[[x]].
\]
Equivalently, after passage to $\OmSpfree^{\ast}\otimes\Q$, the Chern--Dold
character of $p_1^{\Sp}(\zeta)\in\Sp^4(\HP^\infty)$ is
$\widetilde B^{-1}(h)$, where $h=c_2^H(\zeta_{\mathbb C})$ as above. For
$n\geq1$, put
\[
 \widetilde{d_n}
 :=\min\{N\geq1\mid N\widetilde b_n\in\OmSpfree^{-4n}\},
 \qquad
 \widetilde C_n:=\widetilde{d_n}\widetilde b_n.
\]
The double covers lift \eqref{eq:Cn-stong-combination-intro} to the integral
quaternionic class
\[
 \widehat C_n
 :=-\sum_{q=1}^{n+1}A_{n,q}[M(q-1,n;q)]_{\Sp}
 \in\OmSpfree^{-4n}.
\]
By \cref{prop:sp-covering-numerator}, it satisfies
$\widehat C_n=2d_n\widetilde b_n$ and $j(\widehat C_n)=2C_n$.
\Cref{thm:sp-denominator} determines the exact denominator:
\[
 \widetilde{d_n}=2d_n
 =(n+1)\lcm(1,\ldots,2n+2)
 \qquad(n\geq1).
\]
In particular, $\widetilde C_n=\widehat C_n$ is primitive in
$\OmSpfree^{-4n}$, and $C_n\notin j(\OmSpfree^{-4n})$ for every $n\geq1$.

To exclude an integral quaternionic lift for $C_n$, we use the
Buchstaber--Novikov symplectic Pontryagin classes and the
Riemann--Roch--Grothendieck--Hirzebruch formula in complex cobordism
\cite[Theorem~2.4, formula~(29)]{BV24}.
A hypothetical lift would express the coefficients of $\Phi_t(C_n)$ in
$c=t(1-t)$, up to sign, as ordinary Todd genera of integral quaternionic
cobordism classes. Their evenness in dimensions $8k+4$
\cite[Corollary~2(ii), p.~279]{AH59} contradicts an explicit odd coefficient.
The proof is given in \cref{sec:sp-lift}.

The passage from $\LambdaSt$ to the smaller ring $\Lambda$ is entirely
$2$-primary, because
\[
 \LambdaSt\subset\St\subset\widetilde\Lambda\subset\Lambda\!\left[\frac12\right].
\]
To recover the missing dyadic information, we restrict $\Theta_1$ to the
diagonal $y=x$ and use the identities
\[
 \Theta_1(x,x)=B^{-1}(4B(x)),
 \qquad
 B\bigl(\Theta_1(x,x)\bigr)=4B(x).
\]
The unit and symmetry relations for $\Theta_1$ imply that every coefficient
of an odd power $x^k$, $k>1$, in $\Theta_1(x,x)$ is divisible by $2$ in
$\Lambda$. An estimate over cyclic orbits for the coefficients of $\Theta_1(x,x)^m$,
followed by induction using Schröder's functional
equation~\cite[\S~7]{MNT17} $B\bigl(\Theta_1(x,x)\bigr)=4B(x)$, gives
\[
 2^{\nu_2(d_n)}b_n\in\Lambda\otimes\Ztwo,
 \qquad
 \nu_2(d_n)=\nu_2(n+1)+\left\lfloor\log_2(n+1)\right\rfloor.
\]
Together with $C_n\in\Lambda[1/2]$, this proves
\[
 C_n\in\Lambda^{-4n}\qquad\text{for every }n\geq1.
\]
In fact, $\Lambda\subsetneq\LambdaSt$ by \cite[Corollary~5.8]{Buc78}.
The theorem identifies the distinguished sequence of primitive logarithmic
numerators inside the smaller coefficient ring.

The index genus $\Phi_t$ detects the full denominator $d_n$, whereas odd
genera need not do so. More precisely,
\cref{thm:bc-odd-denominator} shows that the genus $\Bc_n$ detects only the
odd part of the number $d_n$. For $N\geq1$, the genus $\Bc_N$ corresponds to
a formal group $F_{\Bc,N}(u,v)$ with odd exponential $f_{\Bc,N}(u)$ satisfying
\[
 \bigl(f_{\Bc,N}'(z)\bigr)^2
 =1-a_1f_{\Bc,N}(z)^2+a_2f_{\Bc,N}(z)^4-
 \cdots+(-1)^Na_Nf_{\Bc,N}(z)^{2N}.
\]
For $N=3$, this is the Buchstaber genus $\Bc$ studied in \cite{Kor26}. We
regard the genera $\Bc_N$ as truncations of the universal odd genus
$\Bc_\infty$, whose odd exponential satisfies
\[
 \bigl(f_{\Bc,\infty}'(z)\bigr)^2
 =1-a_1f_{\Bc,\infty}(z)^2+a_2f_{\Bc,\infty}(z)^4-
 a_3f_{\Bc,\infty}(z)^6+\cdots.
\]
The genus $\Bc_\infty$ is universal among odd genera. Its associated two-valued
law, obtained by the modulus square construction, is the coefficientwise image under $\Bc_\infty$ of
\eqref{universal_two_valued_formal_group}. Since the coefficients of
$B^{-1}$ and $B$ belong to $\St\otimes\mathbb Q$, it is natural to consider
the restrictions of these genera to the Stong ring. We prove the general
$2$-local statement
\[
 \Bc_\infty(\St)
 \subset\Ztwo[a_1,a_2,\ldots],
\]
where $\Ztwo$ is the ring of rational numbers with odd denominator. Thus
neither the universal odd genus nor any of its finite truncations has a
$2$-primary denominator on $\St$. The genera $\Bc_1$, $\Bc_2$, and
$\Bc_3$ ($=\Bc$) are in fact integral
on $\St$, and for the $\Bc_3$ genus one has
\[
 \Bc_3(\St)\subset\mathbb Z[2a_1,a_2,2a_3].
\]
For every $N>3$, however, $\Bc_N$ is not integral on $\St$. Indeed, if
\[
 \theta_{32}=[x^3y^2]\Theta_1(x,y)\in\St,
\]
then the specialization in which all parameters except $a_4$ vanish gives
\[
 \left.\Bc_N(\theta_{32})\right|_{a_j=0,\ j\ne4}
 =\frac{140}{3}a_4.
\]
Thus $\Bc_3$ is the last integral truncation of the universal odd genus. See
\cref{thm:bc-stong-threshold}.

There is a general reason why odd genera are sufficient for computations on
\(\St\). If \(Q(z)\) is the characteristic series of a Hirzebruch genus, put
\[
 \widetilde Q(z)=\sqrt{Q(z)Q(-z)},
 \qquad \widetilde Q(0)=1.
\]
We prove in \cref{thm:oddification-stong} that the genera determined by
\(Q\) and \(\widetilde Q\) have the same restriction to the Stong ring. In
particular, the specializations of the $\Bc_3$ genus at
\[
 (a_1,a_2,a_3)=(-3,3,-1)
 \quad\text{and}\quad
 (a_1,a_2,a_3)=\left(-\frac14,0,0\right)
\]
coincide on \(\St\) with the top Chern number genus \footnote{On almost
complex representatives the top Chern number agrees with the topological
Euler characteristic. For an arbitrary stable complex structure it need not.}
$\chi$ and the Todd genus,
respectively. The proof uses the oriented extension determined by
$Q(z)Q(-z)$: on quaternionic representatives the Chern roots occur in pairs
$\pm x_i$, so the common characteristic class is a power series in the
Pontryagin roots $x_i^2$. These specializations do not exhaust the information
contained in the $\Bc_3$ genus. For every $n\geq5$, we construct classes
$X_n,Y_n\in\Lambda^{-4n}\subset\St^{-4n}$ such that $X_n$, $Y_n$, and
$X_n-Y_n$ are all undecomposable in $\OmU$, even after rationalization, and
\[
 \Och(X_n)=\Och(Y_n),\qquad \chi(X_n)=\chi(Y_n),
 \qquad \Bc(X_n)\ne\Bc(Y_n).
\]
The construction uses the coefficients of $\Theta_1$ directly. See
\cref{thm:bc-not-determined}.

The integrality question also extends beyond the Stong ring. The $\Bc_3$
genus defines an oriented genus after inverting $2$. We prove that its value
on every closed spin manifold of real dimension less than $24$ already lies
in $\mathbb Z[a_1,a_2,a_3]$, see \cref{thm:bc-spin-below24}. The proof compares
it with the Ochanine genus and eliminates the possible halves of
characteristic numbers by the Wu formulas. In particular, the same
integrality holds for compact Calabi--Yau manifolds\footnote{By Calabi--Yau manifolds we mean stably complex manifolds $M$ with $c_1(\tau_M) = 0$ in $H^2(M;\mathbb Z)$.} in these dimensions.
We further prove that the genus is integral on every closed string
$24$-manifold by evaluating its degree-$24$ characteristic polynomial on the
integral basis of string bordism constructed by Han and Huang
\cite[Theorem~1 and Corollary~3]{HH22String}. Thus $\Bc_3$ is integral on
string manifolds through real dimension $24$, see
\cref{thm:bc-string-through24}. The spin dimension bound is sharp: the
Anderson--Brown--Peterson example in \cref{ex:abp-spin24} has real dimension
$24$ and nonintegral $\Bc_3$ genus.

The classes $C_n$ lead to a particularly concrete geometric realization
problem.  The Stong manifold formula
\eqref{eq:Cn-stong-combination-intro} represents $C_n$ by an integral combination of smooth self-conjugate
manifolds in complex cobordism.  One would like instead to
represent $C_n$ by a single connected almost complex manifold, and preferably
by a smooth irreducible projective variety.  Such representatives exist in the
first two degrees.  Let $S$ be an Enriques surface.  Then
\begin{equation}\label{eq:C1-C2-Enriques}
 C_1=[S],\qquad
 C_2=[S^{[2]}]\in\OmU,
 \qquad S^{[2]}=\operatorname{Hilb}^{2}(S).
\end{equation}
Indeed, the canonical class of $S$ is $2$-torsion and
\[
 c_1^2(S)=0,\qquad c_2(S)=\chi(S)=12=d_1,
 \qquad \Td(S)=1
\]
\cite{BHPV04}.  The universal formulas for Hilbert schemes of points,
together with G\"ottsche's Euler characteristic formula, give
\[
 c_1\bigl(S^{[2]}\bigr)=0\ \text{in rational cohomology},\qquad
 c_4\bigl(S^{[2]}\bigr)=\chi\bigl(S^{[2]}\bigr)=90=d_2,
\]
\[
 \Td\bigl(S^{[2]}\bigr)=1,\qquad
 c_2^2\bigl(S^{[2]}\bigr)=270
\]
\cite{EGL01,Got90}.  On the other hand,
\eqref{eq:coefficient-B-Phi} at $t=0$ gives
$\Td(C_1)=\Td(C_2)=1$, while \cref{thm:chern-divisibility} gives
$c_2(C_1)=12$ and $c_4(C_2)=90$.  Since $C_1,C_2\in\St$, all their ordinary Chern numbers containing
$c_1$ vanish. The Todd formula in complex dimension four then gives
$c_2^2(C_2)=270$.  Thus the Chern numbers on the two sides of
\eqref{eq:C1-C2-Enriques} coincide, and the Milnor--Novikov theorem proves the
claimed equalities.

For general $n$, \cref{thm:chern-divisibility} shows that every monomial
symmetric Chern number of the primitive class $C_n$ is divisible by $d_n$ and
that
\[
 c_{2n}(C_n)=d_n.
\]

The Hilbert scheme construction just considered cannot be prolonged:
G\"ottsche's formula already gives
\[
 \chi\bigl(S^{[3]}\bigr)=520\ne 1680=d_3=c_6(C_3),
\]
so the pattern $C_n=[S^{[n]}]$ breaks at $n=3$ and cannot be prolonged to a
family valid for all $n\geq3$.

We leave open the problem of finding comparably natural connected almost
complex representatives $\mathfrak C_n$ for $n\geq3$, preferably smooth
irreducible projective varieties.
Any such representative would satisfy
\[
 \chi(\mathfrak C_n)=d_n,
 \qquad
 \gcd\{m_\lambda[\mathfrak C_n]\mid\lambda\vdash2n\}=d_n.
\]
This is a distinguished family of cases of the Milnor--Hirzebruch realization
problem.  It is also related to the question of Buchstaber and Veselov
\cite{BV26} asking for a characterization of smooth irreducible complex
projective varieties $M^m$ with $\chi(M^m)\ne0$ for which
\[
 \gcd\{m_\lambda[M^m]\mid\lambda\vdash m\}=|\chi(M^m)|.
\]
Complex projective spaces and smooth theta divisors provide basic examples of
this equality.

The paper is organized as follows. \Cref{sec:statement} collects the main results: the exact denominator theorem; the inclusion of the numerator classes in $\LambdaSt$; the exact quaternionic denominators and primitive numerators; the inclusion of every numerator class $C_n$ in the coefficient ring
$\Lambda$; applications of the odd genera $\Bc_N$; the divisibility of the Chern numbers of $C_n$; the oddification theorem; spin integrality below dimension $24$; and string integrality through dimension $24$.
\Cref{sec:main-proof} proves the exact denominator theorem. The upper bound
uses Mishchenko's formula, a ramified residue formula, and a $2$-divisibility
lemma, while the lower bound is detected by the integral index genus $\Phi_t$.
The final Milnor number calculation proves undecomposability and
irreducibility of $C_n$.
\Cref{sec:stong-numerators} first recalls the topological origin of the
invariant differentiation and generalized shift, then studies the series
$H_q$, proves their closed formula, and uses odd integer-valued interpolation
to prove that $C_n\in\LambdaSt$ and to obtain explicit Stong manifold
expressions for the classes $C_n$. Lifting these expressions along the double
covers gives integral quaternionic classes. Their primitivity and the exact
formula $\widetilde{d_n}=2d_n$ follow from the parity obstruction from the Todd genus.
The section then uses $\Theta_1(x,x)$ to prove that every $C_n$ lies in the
coefficient ring $\Lambda$.
\Cref{sec:chern-divisibility} proves the
Chern number divisibility theorem by means of a universal genus recording all
Chern numbers. In \cref{sec:odd-genera} we prove the $2$-local
integrality of the universal odd genus, show that $\Bc_N$ gives the exact odd
denominators for $\Bc_N(b_n)$, determine the integral truncations on $\St$,
prove the oddification theorem, and show that the Ochanine genus together with
the top Chern number genus does not determine the $\Bc_3$ genus even on
undecomposable classes in $\Lambda$. Finally, \cref{sec:spin-integrality}
proves integrality of $\Bc_3$ on spin manifolds of real dimension less than
$24$ and on string manifolds of real dimension at most $24$.

\vspace{0.5em}

\noindent\textbf{Financial support.}
Support from the Basic Research Program of HSE University is gratefully
acknowledged (HSE-BR-2025-060).

\section{Main results}\label{sec:statement}

\subsection{Exact denominators and primitive numerators}

The central problem is to clear the denominators of the coefficients of
$B(x)$ by the smallest possible integers.

Let
\[
 F(u,v)\in\OmU[[u,v]]
\]
be the universal one-dimensional formal group law.  We write
\[
 g(u)=u+O(u^2),\qquad f(z)=z+O(z^2)
\]
for its logarithm and exponential, so that $f=g^{-1}$.  Its formal inverse is
\[
 \bar u=[-1]_F(u)=f(-g(u)).
\]
The logarithm of the associated universal two-valued formal group is the
unique strict series
\[
 B(x)=x+\sum_{n\geq1}b_nx^{n+1}
     \in (\OmU\otimes\mathbb{Q})[[x]]
\]
satisfying
\begin{equation}\label{eq:def-B}
 B(u\bar u)=-g(u)^2.
\end{equation}
Equivalently, its exponential $B^{-1}(h)$ is characterized by
\begin{equation}\label{eq:def-exp-B}
 B^{-1}(-z^2)=f(z)f(-z).
\end{equation}
For $n\geq1$, define

\begin{equation}\label{eq:def-dn}
\begin{aligned}
d_n = \cfrac{n+1}{2} \lcm(1,2,\ldots,2n+2).
 \end{aligned}
\end{equation}

\begin{theorem}[(The exact denominator theorem)]\label{thm:main}
For every $n\geq1$, there is a primitive and undecomposable class
$C_n\in\Omega_{\Uu}^{-4n}$ such that
\begin{equation}\label{b_n_as_fraction}
 b_n=\frac{C_n}{d_n}.
\end{equation}
Its Milnor number is
\[
 s_{2n}(C_n)=2(-1)^n d_n.
\]
In particular, $C_n$ is irreducible in $\OmU$, and $d_n$ is the exact
denominator of $b_n$ in $\OmU\otimes\mathbb{Q}$.
\end{theorem}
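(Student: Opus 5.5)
The proof splits into an integral upper bound, $d_nb_n\in\Omega_{\Uu}^{-4n}$, and a lower bound showing that $C_n:=d_nb_n$ is primitive, followed by a Milnor number computation.

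\emph{Upper bound.} We start from \eqref{eq:def-B}, $B(u\bar u)=-g(u)^2$, and extract $b_n$ by a residue. Put $x=u\bar u=-u^2q(u)$ with $q(u)=-\bar u/u=1+O(u)$. For the strict series $B$,
\[
(n+1)b_n=\res_{x=0}\frac{B'(x)}{x^{n+1}}\dd x .
\]
Substituting $x=u\bar u$ makes the map $u\mapsto x$ a ramified double cover, so that
\[
(n+1)b_n=\tfrac12\res_{u=0}\frac{-2g(u)g'(u)}{(u\bar u)^{n+1}}\dd u=(-1)^n\res_{u=0}\frac{g(u)g'(u)}{u^{2n+2}q(u)^{n+1}}\dd u .
\]
Next we insert Mishchenko's formula \eqref{eq:intro-mishchenko}. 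This gives $g(u)=\sum_k[\CP^k]u^{k+1}/(k+1)$ with $g'(u)=\sum_k[\CP^k]u^k$ integral, and the power series $q(u)^{-(n+1)}$ is integral as well. The residue is therefore an integral combination of terms $[\CP^k]/(k+1)$ with $k\le 2n$, multiplied by integral classes.

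The crude bound on the denominator is then $(n+1)\lcm(1,\ldots,2n+1)$, which differs from $d_n$ by a factor $2$ together with the gap between $\lcm(1..2n+1)$ and $\lcm(1..2n+2)$. These factors must be controlled. For this we rewrite $g g'=\tfrac12(g^2)'$ and integrate by parts:
\[
\res_{u=0}\frac{(g^2)'}{2\,u^{2n+2}q^{n+1}}\dd u=\frac{1}{2}\res_{u=0}\, g^2\,\frac{\dd}{\dd u}\bigl(u^{-2n-2}q^{-n-1}\bigr)\dd u .
\]
Here $g^2$ involves products $\frac{[\CP^i][\CP^j]}{(i+1)(j+1)}$, whose denominators divide $\lcm(1,\ldots,2n+2)$ once they are paired appropriately. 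The key input, and the step I expect to be the main obstacle, is a $2$-divisibility lemma. It should come from the parity property of $q$: $\bar{\bar u}=u$ gives $q(\bar u)=q(u)^{-1}$, which controls the odd part of $u\,q'(u)/q(u)$ modulo $2$. Using this, one must show that the relevant sums are even, so that the factor $\tfrac12$ in $d_n$ is absorbed.

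A concrete route is to work $p$-locally prime by prime. For $p$ odd, the bound follows directly from the residue expression. For $p=2$, I would compare $g^2$ with $g(u)g(\bar u)$ and use $g(\bar u)=-g(u)$.

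\emph{Lower bound and primitivity.} We apply the integral genus $\Phi_t$ of \eqref{eq:intro-Phi-roots}. Its exponential is $f_{\Phi}(z)=(e^{z}-1)/(1+t(e^z-1))$, which has the explicit logarithm
\[
g_\Phi(u)=\log\frac{1+(1-t)u}{1-tu}\cdot\frac{1}{1-2t}\quad\text{(up to normalization)}.
\]
The composite $\Phi_t(f(z)f(-z))$ is then a rational expression in $e^{z}$. Consequently $\Phi_t(B)$ is given by a closed formula, roughly $\tfrac{-1}{(1-2t)^2}\bigl(\log(\ldots)\bigr)^2$ expressed in $x$, and its coefficient of $x^{n+1}$ can be written in terms of harmonic-type sums of binomials in $c=t(1-t)$. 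One then exhibits, for each prime power $p^a\,\|\,d_n$, a coefficient $\Phi_j(b_n)$ whose denominator has exact $p$-adic valuation $a$. The largest $p^k\le 2n+2$ supplies this coefficient through a term $1/p^k$, and the factor $n+1$ enters from the Lagrange-type prefactor. With the upper bound, this gives $\gcd_j\Phi_j(C_n)=1$, hence $C_n$ is primitive.

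\emph{Milnor number.} Finally, $s_{2n}$ is additive and vanishes on products, so it is a derivation-type functional that sees only the linear part. On the logarithm, $s_k$ corresponds to the power-sum genus. In the formula above, only the linear terms $[\CP^{2n}]/(2n+1)$ contribute, together with the ramification coefficient, and $s_{2n}(\CP^{2n})=2n+1$. This should give $s_{2n}(b_n)=2(-1)^n$, hence $s_{2n}(C_n)=2(-1)^nd_n\neq0$. Undecomposability follows, and irreducibility then follows from primitivity as explained in the introduction.
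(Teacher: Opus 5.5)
Your overall plan (residue formula with Mishchenko's formula for the upper bound, $\Phi_t$ for primitivity, $s_{2n}$ for undecomposability) is the paper's. Your formula $(n+1)b_n=(-1)^n\res_{u=0}g(u)g'(u)u^{-2n-2}q(u)^{-n-1}\,\dd u$ and the odd-primary part of the upper bound are correct.

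The first gap is the $2$-primary step, which you flag but do not supply. The crude bound $(n+1)\lcm(1,\ldots,2n+1)$ equals $d_n$ when $n+1$ is a power of $2$, and $2d_n$ otherwise. In the latter case, the only obstructing summand of $\sum_{j}\frac{[\CP^j]}{j+1}[u^{2n-j}]g'(u)q(u)^{-(n+1)}$ is the one with $j+1=N$, where $N$ is the largest power of $2$ not exceeding $2n+1$. So you must prove $[u^{2n+1-N}]g'(u)q(u)^{-(n+1)}\in2\OmU$. Neither of your suggestions gives this. Your integration by parts goes backwards, from $gg'$ to $g^2$; it reintroduces products of two denominators and produces no factor $2$. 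The identity $g(\bar u)=-g(u)$ merely restates $B(u\bar u)=-g(u)^2$. The involution $u\mapsto\bar u$ is the right source, but it enters through two specific facts. Put $X=u^2q(u)=-u\bar u$. First, $[u^{2r-1}]g'(u)q(u)^{-r}=\res_{u=0}g'(u)X^{-r}\,\dd u=0$ for $r\geq1$: after substituting $t=g(u)$ one has $X=-f(t)f(-t)$, which is even in $t$. Second, if $M$ is a power of $2$ and $0<r<M$, then $[u^{2r-1}]g'(u)q(u)^{-(M+r)}\equiv0\pmod2$. This follows by induction on $M$ from the first fact and the Frobenius identity $q^{-M}\in(\OmU/2\OmU)[[u^M]]$. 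Taking $M=N/2$ and $r=n+1-M$ supplies the missing factor. Without it, you only get $2d_nb_n\in\OmU$ when $n+1$ is not a power of $2$.

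The lower bound is likewise only a plan. Your exponential differs from that of $\Phi_t$, namely $(1-e^{-z})/(1+t(e^z-1))$, by the factor $e^z$; this cancels in $f(z)f(-z)$, so it is harmless. The factor $1/(1-2t)$ in your logarithm is spurious. What is actually needed is the exact formula $f_{\Phi,t}(z)f_{\Phi,t}(-z)=-4\sinh^2(z/2)/(1+4c\sinh^2(z/2))$. It gives $B_{\Phi,t}(x)=4\arcsin^2\bigl(\sqrt x/(2\sqrt{1+cx})\bigr)$ and $[x^{n+1}]B_{\Phi,t}=\sum_{k=1}^{m}(-1)^{m-k}r_{m,k}c^{m-k}$, where $m=n+1$ and $r_{m,k}=\frac1m\binom mk\big/\bigl(k\binom{2k-1}{k-1}\bigr)$. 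You then need a $p$-adic lemma giving, for each prime $p$, an index $k$ with $-\vp(r_{m,k})=\vp(d_n)$.

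Your heuristic cannot be right as stated. Taken literally (largest power of $p$ up to $2n+2$, times the prefactor $n+1$), it predicts the denominator $(n+1)\lcm(1,\ldots,2n+2)=2d_n$, one factor of $2$ too many. It also ignores $\binom mk$, which can absorb the powers of $p$ dividing $m$. With $e=\lfloor\log_p(2m-1)\rfloor$, you need $\vp\binom mk=0$ and $\vp\bigl(k\binom{2k-1}{k-1}\bigr)=e$ simultaneously:
\begin{itemize}
\item For odd $p$ with $p^e\leq m$, $k=p^e$ works by Lucas' theorem.
\item When $m<p^e\leq 2m-1$, that choice is unavailable, and the paper uses a base-$p$ digit construction.
\item At $p=2$, one takes $k=m$ or $k=2^{\lceil\log_2m\rceil-1}$.
\end{itemize}
Finally, injectivity of $c\mapsto t(1-t)$ modulo every prime transfers primitivity to the integers $\Phi_j(C_n)$.

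Your Milnor number argument, by contrast, is a valid alternative to the paper's genus $Q(z)=1+z^{2n}$, once made precise. Modulo $\mathfrak a_{\Q}^{\,2}$ one has $q(u)\equiv1+\sum_{k\ \mathrm{odd}}\frac{2[\CP^k]}{k+1}u^k$. Hence $q$ contributes nothing to the linear part of $[u^{2n+1}]gg'q^{-n-1}$, while $g$ and $g'$ together contribute $\frac{2n+2}{2n+1}[\CP^{2n}]$. This gives $b_n\equiv\frac{2(-1)^n}{2n+1}[\CP^{2n}]$ and therefore $s_{2n}(b_n)=2(-1)^n$.
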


The proof is given in \cref{sec:main-proof}, with the final argument in \cref{sec:lower}.

The denominator statement follows from an integral upper bound and a lower
bound detected by the index genus. A Milnor number calculation proves
undecomposability.

\subsection{Stong manifold representatives of the numerators}

Let $\LambdaSt\subset\OmU$ be the subring generated by the quotient Stong
manifolds $[M(\omega;q;\Ii)]$. For a two-term tuple we write
$M(k,l;q;\Ii)=M((k,l);q;\Ii)$.

\begin{theorem}[(The Stong manifold numerator theorem)]
\label{thm:stong-numerators}
For every $n\geq1$, the primitive numerator $C_n=d_nb_n$ belongs to
$\LambdaSt^{-4n}$. More precisely, put
\[
 L_{2n+2}=\lcm(1,2,\ldots,2n+2),
 \qquad
 e_n=\frac{L_{2n+2}}2,
\]
and, for $1\leq q\leq n+1$, put
\begin{equation}\label{eq:Anq-main}
 A_{n,q}
 =(-1)^{q+1}\frac{2e_n}{q}
   \frac{\binom{2n+2}{n+1-q}}{\binom{2n+2}{n+1}}.
\end{equation}
Then $A_{n,q}\in\mathbb Z$ and
\begin{equation}\label{eq:Cn-stong-combination-main}
 C_n
 =-\sum_{q=1}^{n+1}
 A_{n,q}[M(q-1,n;q;\Ii)].
\end{equation}
In particular,
\[
 C_n\in\LambdaSt^{-4n}
 \qquad\text{for every }n\geq1.
\]
\end{theorem}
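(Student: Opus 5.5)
The plan follows the outline in the introduction. It has three stages: a closed formula for the generalized shift series $H_q$, an interpolation lemma for odd integer-valued polynomials, and a residue computation linking the two through $b_n$.

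\emph{Step 1 (closed formula for $H_q$).} I will start from Buchstaber's generalized shift series and the coefficient formula
\[
H_q(x,y)=-\sum_{k,l\ge0}[M(k,l;q;\Ii)]x^ky^l .
\]
The target is an expression for $H_q$ in terms of the universal formal group. I expect it to come out as a ramified residue, roughly a coefficient extraction in $u$ of an expression built from $f(qg(u))f(-qg(u))$ and $x=u\bar u$. The key property to extract is this: for fixed $n$, the coefficient
\[
P_n(q)=[x^{q-1}y^n]H_q(x,y)
\]
is the value at $T=q$ of a single polynomial $P_n(T)\in(\OmU\otimes\Q)[T]$. This should be visible from the closed formula, since $q$ enters only through $f(qg)$, and expanding in powers of $q$ gives a polynomial whose degree in each fixed coefficient is bounded by $2n+1$. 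The same formula should show that $P_n$ is odd in $T$, because $q\mapsto -q$ corresponds to conjugation $u\mapsto\bar u$, under which $y$ is invariant. Integrality at the positive integers is automatic: $P_n(q)$ is a genuine cobordism class $-[M(q-1,n;q;\Ii)]$ in $\LambdaSt$ for $1\le q\le n+1$, and more generally for all $q\ge1$. By oddness, $P_n$ is also integral at negative integers and vanishes at $0$.

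\emph{Step 2 (residue identity $P_n'(0)=(n+1)b_n$).} I will differentiate the closed formula in $q$ at $q=0$. To first order, $f(qg)f(-qg)=-q^2g^2+\cdots$ has no linear term, so the linear term in $q$ must arise from the $x^{q-1}$ extraction itself. That extraction is a residue of $x^{-q}\,dx$, and $x^{-q}=1-q\log x+\cdots$. The $q$-linear part should therefore be a residue of $\log$-type, pairing $y^n$ with $B(x)=-g(u)^2$. The factor $n+1$ would then come from a Lagrange inversion step, as in Mishchenko's formula. I expect this to be the main obstacle: the residue is ramified because $x=u\bar u$ is a square-type coordinate, and the bookkeeping of the parity $q(u)=-\bar u/u$ must be done carefully. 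I would first test the identity in the additive specialization $f(z)=z$ and at $n=1$ to fix signs.

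\emph{Step 3 (interpolation lemma and conclusion).} For odd integer-valued $P$ of degree at most $2n+1$, I will prove that
\[
e_nP'(0)=\sum_{q=1}^{n+1}A_{n,q}P(q),\qquad A_{n,q}\in\Z .
\]
The approach is to write $P$ in the basis of odd central binomial-type polynomials, or to use Lagrange interpolation at the nodes $\pm1,\dots,\pm(n+1),0$. This gives
\[
P'(0)=\sum_{q=1}^{n+1}(-1)^{q+1}\frac{2}{q}\frac{\binom{2n+2}{n+1-q}}{\binom{2n+2}{n+1}}P(q),
\]
a known identity which can be checked on the monomials $T^{2k+1}$. It then remains to show that each $e_n\cdot\frac{2}{q}\binom{2n+2}{n+1-q}/\binom{2n+2}{n+1}$ is an integer, which I will verify prime by prime using Kummer-type valuation estimates against $\lcm(1,\dots,2n+2)$.

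Applying the lemma coefficientwise to $P_n$, which is legitimate because its coefficients lie in a free abelian group, gives
\[
e_n(n+1)b_n=\sum_q A_{n,q}P_n(q).
\]
Since $d_n=(n+1)e_n$ and $P_n(q)=-[M(q-1,n;q;\Ii)]$, this is exactly \eqref{eq:Cn-stong-combination-main}, with $C_n=d_nb_n$ as in \cref{thm:main}.
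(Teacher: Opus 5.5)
Your overall architecture is the paper's. You expand $H_q$ in quotient Stong classes, show that $P_n(q)=[x^{q-1}y^n]H_q$ is the restriction of an odd polynomial of degree at most $2n+1$ with $P_n'(0)=(n+1)b_n$, and finish with odd integer-valued interpolation. Step~3 is sound. The paper gets $A_{n,q}\in\mathbb Z$ more cheaply than your prime-by-prime check: it uses the basis $\binom{T+r}{2r+1}$, $0\le r\le n$, shows $(2r+1)!/(r!)^2\mid e_n$, and notes that evaluation at $1,\dots,n+1$ is an isomorphism onto $\mathbb Z^{n+1}$. The gap is in Steps~1--2, which carry the real content. There your guessed mechanism is wrong, not merely incomplete.

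The parameter $q$ does not enter through the $q$-series $f(qg(u))$; it enters as an exponent of the two branches of the two-valued law. Solve $\mathcal D_1\Psi_q=x^q$ in the $B$-coordinate, where it becomes $\psi_q'(T)+2T\psi_q''(T)=E(T)^q$ with $E=B^{-1}$. Differentiating $u_q=\Delta\Psi_q$ then gives
\[
 H_q(x,y)=\frac{B'(x)B'(y)}{4\sqrt{B(x)B(y)}}\Bigl[z_+^q-z_-^q\Bigr],
 \qquad z_\pm=E\bigl((\sqrt{B(x)}\pm\sqrt{B(y)})^2\bigr).
\]
Substituting $x=E(u^2)$, $y=E(v^2)$ and applying the ramified residue formula twice yields
\[
 P_n(q)=\frac14\res_{v=0}\res_{u=0}\bigl(R_+^q-R_-^q\bigr)E(v^2)^{-n-1}\,\dd u\,\dd v,
 \qquad R_\pm=\frac{E((u\pm v)^2)}{E(u^2)}.
\]
Everything you need is read off from this formula.
\begin{itemize}
\item Polynomiality and the degree bound: $R_\pm=1+O(v)$ and $E(v^2)^{-n-1}=v^{-2n-2}(v^2/E(v^2))^{n+1}$, together with $R_\pm^T=\sum_j\binom{T}{j}(R_\pm-1)^j$, give a polynomial in $q$ of degree at most $2n+1$.
\item Oddness: the substitutions $u\mapsto -u\mp v$ invert $R_\pm$ and reverse $\dd u$. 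It is not a conjugation $u\mapsto\bar u$.
\item The derivative: your heuristic is backwards. The factor $x^{-q}$ from the coefficient extraction is absorbed into $R_\pm^q$, and its contribution $-q\log E(u^2)$ cancels in $\log R_+-\log R_-$. The linear term instead comes from $\log\bigl(E((u+v)^2)/E((u-v)^2)\bigr)$. Its $u$-residue equals that of $2\log\frac{u+v}{u-v}$, namely $4v$. This gives $P_n'(0)=[w^n](w/E(w))^{n+1}$, and Lagrange inversion for $B=E^{-1}$ converts it to $(n+1)b_n$.
\end{itemize}
Without this formula, your proposal establishes neither the degree bound, nor oddness, nor the derivative identity. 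These are exactly the hypotheses the interpolation lemma requires.
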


The proof is given in \cref{sec:odd-polynomial-arithmetic}.

The dimension formula gives
\[
 \dim_{\mathbb R}M(q-1,n;q;\Ii)=4n,
\]
so the representation in \eqref{eq:Cn-stong-combination-main} is homogeneous.

Let $H_q(x,y)$ be the generalized shift series defined in
\cref{sec:stong-numerators}. If
\[
 R_H
 =\mathbb Z[\text{coefficients of }H_1,H_2,H_3],
\]
then the quotient expansion and generator statement in
\cref{prop:Hq-stong-expansion,cor:Hq-recurrence-generators} give
\begin{equation}\label{eq:RH-LambdaSt-main}
 R_H=\LambdaSt.
\end{equation}
Thus \cref{thm:stong-numerators} also proves that every $C_n$ is an integral
polynomial in the coefficients of $H_1,H_2,H_3$.

\subsection{The quaternionic logarithm}

Put
\begin{equation}\label{eq:def-Omega-Sp-free}
 \OmSpfree^{\ast}:=\OmSp/\Tors.
\end{equation}
Let
\[
 j\colon\OmSpfree^{\ast}\longrightarrow\OmU
\]
be induced by forgetting the stable $\Sp$-structure. By Stong's theorem
\cite[\S4]{Sto67}, the forgetful homomorphism
\[
\Omega_{\Sp}^{*}[1/2]\longrightarrow\Omega_{\mathrm {SO}}^{*}[1/2]
\]
is an isomorphism. Since it factors through $j[1/2]$, the latter is
injective. On the other hand, \eqref{eq:intro-ring-hierarchy} identifies
its image with $\St[1/2]$. Hence
\[
j[1/2]\colon
\widetilde{\Omega}_{\Sp}^{*}[1/2]
\xrightarrow{\cong}
\St[1/2],
\]
and therefore its rational extension is an isomorphism
\[
j_{\Q}\colon
\widetilde{\Omega}_{\Sp}^{*}\otimes\Q
\xrightarrow{\cong}
\St\otimes\Q.
\] Since \cref{thm:stong-numerators} gives $b_n\in\St\otimes\Q$, define the
quaternionic logarithm coefficientwise by
\begin{equation}\label{eq:def-B-tilde}
 \widetilde B(x)
 :=j_{\Q}^{-1}(B(x))
 =x+\sum_{n\geq1}\widetilde b_nx^{n+1}
 \in(\OmSpfree^{\ast}\otimes\Q)[[x]].
\end{equation}
Equivalently, after passage to $\OmSpfree^{\ast}\otimes\Q$, the Chern--Dold
character of $p_1^{\Sp}(\zeta)\in\Sp^4(\HP^\infty)$ is
$\widetilde B^{-1}(h)$, where $h=c_2^H(\zeta_{\mathbb C})$ as above. For
$n\geq1$ put
\begin{equation}\label{eq:def-dn-Sp}
 \widetilde{d_n}
 :=\min\{N\geq1\mid N\widetilde b_n\in\OmSpfree^{-4n}\}.
\end{equation}
Using the integers $A_{n,q}$ from \eqref{eq:Anq-main}, define
\begin{equation}\label{eq:def-Cn-hat}
 \widehat C_n
 :=-\sum_{q=1}^{n+1}
 A_{n,q}[M(q-1,n;q)]_{\Sp}
 \in\OmSpfree^{-4n},
\end{equation}
where the subscript indicates the quaternionic cobordism class.

\begin{proposition}
\label[proposition]{prop:sp-covering-numerator}
For every $n\geq1$,
\begin{equation}\label{eq:Cn-hat-images}
 j(\widehat C_n)=2C_n,
 \qquad
 \widehat C_n=2d_n\widetilde b_n.
\end{equation}
\end{proposition}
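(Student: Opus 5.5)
The plan is to apply $j$ to the defining sum \eqref{eq:def-Cn-hat} term by term. Then we compare with \cref{thm:stong-numerators} by means of Nadiradze's covering formula. Finally we pull the result back along $j_{\Q}$, using its injectivity.

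First, $j$ forgets the stable $\Sp$-structure. So for each $q$ the class $j([M(q-1,n;q)]_{\Sp})$ is the complex cobordism class $[M(q-1,n;q)]\in\OmU$. Here the stable complex structure is the one underlying Stong's $\Sp$-structure, which is the structure used throughout the introduction. Nadiradze's formula gives
\[
 [M(q-1,n;q)]=2[M(q-1,n;q;\Ii)].
\]
I have to check one point: the quotient $M(q-1,n;q;\Ii)$ is taken of an $\Ii$-invariant transverse representative. Its double cover is cobordant, as a stably complex manifold, to the $\Sp$-manifold with its forgotten structure. This is exactly the content of the covering formula recalled before the definition of $\LambdaSt$, so it may be assumed.

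Next I combine this with linearity of $j$ and with \eqref{eq:Cn-stong-combination-main}:
\[
 j(\widehat C_n)=-\sum_{q=1}^{n+1}A_{n,q}[M(q-1,n;q)]
 =-2\sum_{q=1}^{n+1}A_{n,q}[M(q-1,n;q;\Ii)]=2C_n .
\]
This gives the first identity in \eqref{eq:Cn-hat-images}. The integrality $A_{n,q}\in\mathbb Z$ from \cref{thm:stong-numerators} ensures that $\widehat C_n$ really lies in $\OmSpfree^{-4n}$ rather than only in its rationalization.

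For the second identity, note that $j_{\Q}$ restricts to $j$ on the image of the torsion-free group $\OmSpfree^{\ast}$ in $\OmSpfree^{\ast}\otimes\Q$. By \eqref{eq:def-B-tilde} it sends $\widetilde b_n$ to $b_n=C_n/d_n$. Hence
\[
 j_{\Q}(2d_n\widetilde b_n)=2C_n=j_{\Q}(\widehat C_n).
\]
Since $j_{\Q}$ is an isomorphism onto $\St\otimes\Q$, as established just before \eqref{eq:def-B-tilde}, we get $\widehat C_n=2d_n\widetilde b_n$ in $\OmSpfree^{\ast}\otimes\Q$. Because $\OmSpfree^{\ast}$ is torsion-free, it embeds in its rationalization, and the equality holds in $\OmSpfree^{-4n}$.

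The only step with real content is the first one: the compatibility between the forgetful map of the $\Sp$-class and Nadiradze's double-cover identity. Once that is granted, the rest is formal linear algebra with the injective map $j_{\Q}$.
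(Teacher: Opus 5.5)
Your proposal is correct and follows the paper's proof essentially step by step. You apply $j$ termwise, use the covering identity $j([M(q-1,n;q)]_{\Sp})=2[M(q-1,n;q;\Ii)]$ together with \eqref{eq:Cn-stong-combination-main}, and conclude with injectivity of $j_{\Q}$. The only difference is that you cite this identity from the introduction, whereas the paper re-derives it: naturality of Chern classes under the degree-two covering $\pi$ shows that every Chern number upstairs is twice the corresponding one downstairs, and Chern numbers determine complex cobordism classes.
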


Consequently,
\begin{equation}\label{eq:dn-Sp-dichotomy}
 \widetilde{d_n}\in\{d_n,2d_n\}.
\end{equation}
The second alternative always holds.

\begin{theorem}[(The quaternionic denominator theorem)]
\label{thm:sp-denominator}
We have
\begin{equation}\label{eq:Sp-lift-equivalences}
 \widetilde{d_n}=2d_n
 =(n+1)\lcm(1,\ldots,2n+2).
\end{equation}
\end{theorem}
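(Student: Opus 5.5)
By \cref{prop:sp-covering-numerator} we have $\widehat C_n=2d_n\widetilde b_n\in\OmSpfree^{-4n}$, so $\widetilde{d_n}$ divides $2d_n$. Since $j$ is injective on $\OmSpfree^{\ast}$ (its rationalization is injective and the source is torsion free), any integral lift $N\widetilde b_n$ maps to $Nb_n\in\OmU$. \Cref{thm:main} then forces $d_n\mid N$. This gives the dichotomy \eqref{eq:dn-Sp-dichotomy}. It remains to exclude $\widetilde{d_n}=d_n$, i.e.\ to show that there is no $X\in\OmSpfree^{-4n}$ with $j(X)=C_n$.

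I would argue by contradiction via the index genus. Suppose $j(X)=C_n$ with $X$ quaternionic. Write the stable bundle of $X$ in Buchstaber--Novikov symplectic Pontryagin roots; then the complex Chern roots occur in pairs $\pm x_i$. The characteristic series of $\Phi_t$ then pairs as
\[
Q_{\Phi,t}(x)Q_{\Phi,t}(-x)=\frac{x^2}{(1-e^{-x})(e^{x}-1)}\bigl(1+t(e^x-1)\bigr)\bigl(1+t(e^{-x}-1)\bigr).
\]
The last factor equals $1+c\,(e^x+e^{-x}-2)$ with $c=t(1-t)$. Hence $\Phi_t(C_n)$ is a polynomial in $c$,
\[
\Phi_t(C_n)=\sum_k c^k\,\Td\bigl(\text{twist by } \gamma^k\text{ of the quaternionic bundle }\textstyle\sum(L_i\oplus\bar L_i)-2\bigr).
\]
By the Riemann--Roch--Grothendieck--Hirzebruch formula in complex cobordism \cite[Theorem~2.4, formula~(29)]{BV24}, each coefficient of $c^k$ is, up to sign, the Todd genus of an integral class in $j(\OmSpfree^{-4(n-k)})$. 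Concretely, these are the duals of symplectic Pontryagin classes $p^{\Sp}_\omega(X)$ of $X$, realized as quaternionic submanifolds. Now apply Atiyah--Hirzebruch \cite[Corollary~2(ii)]{AH59}: the Todd genus ($=\hat A$-genus up to sign for spin manifolds) of a quaternionic, hence spin, manifold of real dimension $8k+4$ is even. So every coefficient of $c^{k}$ with $n-k$ odd would be even.

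The contradiction therefore requires an explicit odd coefficient: some $k$ with $n-k$ odd such that $[c^k]\Phi_t(C_n)$ is odd. I would compute this from the closed form of $\Phi_t(b_n)$ obtained in \cref{sec:lower} (formula \eqref{eq:coefficient-B-Phi}). Evaluating $\Phi_t$ on $B(x)$ via $B(u\bar u)=-g(u)^2$, with the multiplicative formal group $F_{\Phi,t}$, gives a generating function in $c$. Multiplying by $d_n$, I would pick out the top coefficient, $k=n-1$ (real dimension $4$, where $8\cdot0+4$ applies), or the coefficient tied to the primitivity argument, and show that it is odd. The natural candidate is the case $2n-2k=2$, where the class is $4$-dimensional quaternionic and its Todd genus is $-\tfrac{1}{8}\sigma$, which must be even by Rokhlin.

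The main obstacle is this parity computation. It means identifying precisely which combination of Pontryagin-class submanifolds appears at order $c^{n-1}$ and checking that $d_n$ times the corresponding rational coefficient of $b_n$ is odd for every $n$, uniformly. I would first try to express that coefficient via the residue formula as a $2$-adic unit using $\nu_2(d_n)=\nu_2(n+1)+\lfloor\log_2(n+1)\rfloor$. If this fails, I would instead test the coefficient of lowest $c$-degree with $n-k$ odd.
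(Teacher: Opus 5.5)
\textbf{There is a genuine gap: your framework matches the paper's, but the key parity step is never completed, and both candidate coefficients you propose fail for general $n$.} What you share with the paper is the dichotomy $\widetilde{d_n}\in\{d_n,2d_n\}$, the pairing of Chern roots $\pm x_i$ that makes $\Phi_t$ a polynomial in $c=t(1-t)$, and the use of the cobordism Riemann--Roch--Grothendieck--Hirzebruch formula. That formula identifies $(-1)^k[c^k]\mathcal R_n(c)$ with the Todd genus of the quaternionic class $\pi_!^{\Sp}(p_k^{\Sp}(\tau_X))$ of real dimension $4(n-k)$. This is a single symplectic Pontryagin class, not a general $p_\omega^{\Sp}$. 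You then use Atiyah--Hirzebruch evenness in dimensions $8l+4$.

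The gap is exactly the step you flag as the main obstacle: you never exhibit the odd coefficient.
\begin{itemize}
\item The top coefficient $k=n-1$ is $\pm d_nr_{n+1,2}=\pm d_n n/12$. Its $2$-adic valuation is $\nu_2(n)+\nu_2(n+1)+\lfloor\log_2(n+1)\rfloor-2$, which vanishes only for $n=1,2$. For $n=3$ it is $1680\cdot 3/12=420$.
\item The lowest admissible coefficient also fails. For $n=9$, $[c^0]\mathcal R_9(c)=d_9r_{10,10}$ has $\nu_2=\lfloor\log_2 10\rfloor-s_2(9)=1$. For $n=10$, $[c^1]\mathcal R_{10}(c)=-d_{10}r_{11,10}$ has $\nu_2=\lfloor\log_2 11\rfloor-s_2(9)=1$.
\end{itemize}
So neither of your choices yields a contradiction uniformly in $n$.

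The missing idea is that the coefficient you need is already singled out by the $p=2$ case of \cref{lem:arithmetic}. Put $q=2^{\lfloor\log_2(n+1)\rfloor}$ and $r=n+1-q$. At the index $k=q$ the maximum in \eqref{eq:max-valuation} is attained: Lucas' theorem gives $\nu_2\binom{n+1}{q}=0$, and $s_2(q-1)=\lfloor\log_2(n+1)\rfloor$. Hence, by \eqref{eq:primitive-index-polynomial}, $[c^r]\mathcal R_n(c)=(-1)^rd_nr_{n+1,q}$ is odd.

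Because $n+1\geq2$, the power of two $q$ is at least $2$, so $n-r=q-1$ is odd. The class $\pi_!^{\Sp}(p_r^{\Sp}(\tau_X))$ therefore lives in real dimension $4(q-1)\equiv4\pmod 8$, where its Todd genus ($=\widehat A$-genus) must be even. This is the contradiction, valid for every $n\geq1$. You do not need to restrict to dimension $4$ and Rokhlin's theorem, nor return to the residue formula: \eqref{eq:coefficient-B-Phi} already gives every coefficient in closed form.
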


In particular, $C_n\notin j(\OmSpfree^{-4n})$, and the exact numerator
\[
 \widetilde C_n:=\widetilde{d_n}\widetilde b_n
 =\widehat C_n\in\OmSpfree^{-4n}
\]
is primitive.

The proofs are given in \cref{sec:sp-lift}.

Thus the explicit covering class $\widehat C_n$ is the primitive quaternionic
numerator in every degree. The case $n=1$ is illustrated in
\cref{rem:first-Sp-denominator}.

\subsection{The coefficient ring \texorpdfstring{$\Lambda$}{Lambda}}

Let
\[
 \Lambda
 =\mathbb Z[\text{coefficients of }\Theta_1\text{ and }
                  \text{coefficients of }\Theta_2]
 \subset\OmU
\]
and
\[
 \widetilde\Lambda
 =\mathbb Z\left[
    \text{coefficients of }\frac{\Theta_1}{2}\text{ and }
    \text{coefficients of }\Theta_2
  \right]
 \subset\OmU\!\left[\frac12\right].
\]
Buchstaber's results
\cite[Theorems~4.1, 4.4, 5.13 and Lemma~6.6]{Buc78} give
\begin{equation}\label{eq:ring-hierarchy-main}
 \Lambda\subset\LambdaSt\subset\St,
 \qquad
 \St=\widetilde\Lambda\cap\OmU.
\end{equation}
The equality $R_H=\LambdaSt$ in \eqref{eq:RH-LambdaSt-main} gives the geometric
form of the middle ring. Since $C_n$ is primitive in $\OmU$, its membership in
$\LambdaSt$ also implies that it is primitive in both $\LambdaSt$ and $\St$.

\begin{theorem}[(Numerators in the coefficient ring $\Lambda$)]
\label{thm:Cn-Lambda}
For every $n\geq1$, one has
\[
 C_n\in\Lambda^{-4n}.
\]
In particular, $C_n$ is primitive in $\Lambda$.
\end{theorem}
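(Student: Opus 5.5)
The plan is to split the membership $C_n\in\Lambda^{-4n}$ into an odd-primary part and a $2$-primary part, as the introduction indicates. By \eqref{eq:ring-hierarchy-main} and \cref{thm:stong-numerators} we have $C_n\in\LambdaSt\subset\St\subset\widetilde\Lambda\subset\Lambda[1/2]$. So $C_n\in\Lambda[1/2]$, and $2^aC_n\in\Lambda$ for some $a$.

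Since $\Lambda$ is a subring of the torsion-free group $\OmU$, it then suffices to show that $C_n$ lies in $\Lambda\otimes\Ztwo$ inside $\OmU\otimes\Q$. Indeed, an element of $\OmU$ that lies both in $\Lambda[1/2]$ and in $\Lambda\otimes\Ztwo$ lies in $\Lambda$: the degree $-4n$ part of $\Lambda$ is a finitely generated lattice, and both localizations agree with it. Because $d_n/2^{\nu_2(d_n)}$ is odd and $C_n=d_nb_n$, the required statement is
\[
 2^{\nu_2(d_n)}b_n\in\Lambda\otimes\Ztwo,\qquad \nu_2(d_n)=\nu_2(n+1)+\lfloor\log_2(n+1)\rfloor .
\]

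For the $2$-local statement I work on the diagonal. From $B(z_\pm)=(\sqrt{B(x)}\pm\sqrt{B(y)})^2$ I get $z_+=B^{-1}(4B(x))$ and $z_-=0$ at $y=x$. Hence
\[
 \Theta_1(x,x)=\phi(x):=B^{-1}(4B(x))=4x+\sum_{k\geq2}\theta_kx^k,\qquad \theta_k\in\Lambda .
\]
The symmetry $\Theta_1(x,y)=\Theta_1(y,x)$ pairs the coefficients of $x^iy^j$ and $x^jy^i$. When $i+j$ is odd no diagonal term survives, so $\theta_k$ is a sum of pairs and $\theta_k\in2\Lambda$ for odd $k$. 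The unit condition $\Theta_1(x,0)=x$ removes the terms with $i=0$ or $j=0$.

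I then solve Schröder's equation $B(\phi(x))=4B(x)$ coefficientwise. Comparing coefficients of $x^{n+1}$ gives
\[
 (4^{n+1}-4)\,b_n=-\theta_{n+1}-\sum_{m=1}^{n-1}b_m\,[x^{n+1}]\phi(x)^{m+1}+(\text{correction from }4^{m+1}\text{ terms}),
\]
more precisely $4b_n-4^{n+1}b_n=\sum_{m<n}b_m[x^{n+1}]\phi^{m+1}+\theta_{n+1}$. Since $4^{n+1}-4=4(4^n-1)$ with $4^n-1$ odd, this gives
\[
 \nu_2\text{-denominator}(b_n)\leq 2+\max_{m<n}\bigl(\text{denominator}(b_m)-\nu_2([x^{n+1}]\phi^{m+1})\bigr).
\]

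The key step, and the main obstacle, is a $2$-adic estimate for $c_{m,k}:=[x^k]\phi(x)^{m+1}$ in $\Lambda\otimes\Ztwo$. I would expand $\phi^{m+1}$ as a sum over ordered $(m+1)$-tuples of exponents and group the tuples into orbits under cyclic rotation. The orbit of a tuple with period $r$ has size $(m+1)/r$. This yields a divisibility of the form $\nu_2(c_{m,k})\geq \nu_2(m+1)-\nu_2(\text{period})$, plus $2$ for each linear factor $4x$ and $1$ for each odd-index factor $\theta_{\mathrm{odd}}$.

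The induction hypothesis is $\nu_2(b_m)\geq-\nu_2(m+1)-\lfloor\log_2(m+1)\rfloor$. The estimate must show that each term $b_mc_{m,n+1}$ has valuation at least $2-\nu_2(n+1)-\lfloor\log_2(n+1)\rfloor$. The case analysis compares $\nu_2(m+1)$ with $\nu_2(n+1)$ and counts how many of the $4x$ factors a tuple with exponent sum $n+1$ must contain. I expect this bookkeeping to be the hardest part: the needed bound is tight exactly when $n+1$ is a power of $2$.

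I would first verify the small cases $n=1,2,3$ against $d_1=12$, $d_2=90$ and $d_3=1680$. Then I would run the induction using the cyclic-orbit bound combined with the evenness of $\theta_{\mathrm{odd}}$. Primitivity of $C_n$ in $\Lambda$ then follows immediately from its primitivity in $\OmU$ (\cref{thm:main}).
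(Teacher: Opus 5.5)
Your plan is the paper's proof: the reduction $\Lambda^{-4n}[1/2]\cap(\Lambda^{-4n}\otimes\Ztwo)=\Lambda^{-4n}$, the diagonal identity $\Theta_1(x,x)=B^{-1}(4B(x))$ with coefficients in $2\Lambda$ in odd degrees $k>1$, the Schröder recursion using $4^{N}-4=4(4^{N-1}-1)$, and a cyclic-orbit estimate for $[x^{N}]\Theta_1(x,x)^{M}$, exactly as in \cref{prop:diagonal-doubling,lem:dyadic-powers,lem:dyadic-linearization}. (The unit identity is $\Theta_1(x,0)=2x$, not $x$; this is what gives the leading term $4x$.)

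The bookkeeping you defer closes once you use that a tuple made of $r$ copies of a block with entry sum $S$ satisfies $n+1=rS$. Then the $\nu_2(m+1)$ terms cancel. Write $M=m+1$, $N=n+1$, let $\ell$ count entries equal to $1$, and let $\ell_{\mathrm{odd}}$ count odd entries greater than $1$. One only needs $\nu_2(S)+2\ell+\ell_{\mathrm{odd}}+\lfloor\log_2N\rfloor-\lfloor\log_2M\rfloor\geq2$. This holds in every case. If the two logarithms agree, then $N<2M$, which forces an entry equal to $1$. If they differ by one, either some entry is odd or $S$ is even. Otherwise the difference of logarithms is already at least $2$.
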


The proof is given in \cref{sec:coefficient-ring}. Its $2$-primary input is the
diagonal identity
\[
 \Theta_1(x,x)=B^{-1}(4B(x)).
\]

\subsection{Odd genera and their restrictions}

The finite odd genera $\Bc_N$ provide natural specializations of the universal
two-valued logarithm. They detect the odd-primary parts of the denominators of
the coefficients $b_n$ (see \eqref{b_n_as_fraction}), and their restrictions to the Stong ring exhibit a
sharp change at $N=3$.

Let $f_{\Bc,\infty}(z)$ be the universal odd series \(f_{\mathrm{Bc},\infty}(z)=z+O(z^3)\) satisfying
\begin{equation}\label{eq:bc-infty-exponential}
 \bigl(f_{\Bc,\infty}'(z)\bigr)^2
 =1-a_1f_{\Bc,\infty}(z)^2+a_2f_{\Bc,\infty}(z)^4-
  a_3f_{\Bc,\infty}(z)^6+\cdots,
\end{equation}
and let
\[
 \Bc_\infty\colon\OmU\longrightarrow\mathbb Q[a_1,a_2,\ldots]
\]
be the corresponding genus. We write $\Ztwo$ for the localization of
$\mathbb Z$ at the prime ideal $(2)$, that is, the ring of rational numbers
with odd denominator.

\begin{theorem}[(Two-local integrality of the universal odd genus $\Bc_{\infty}$)]
\label{thm:bc-infty-two-local}
One has
\[
 \Bc_\infty(\St)\subset\Ztwo[a_1,a_2,\ldots].
\]
Consequently, no finite truncation $\Bc_N$ has a $2$-primary denominator on
$\St$.
\end{theorem}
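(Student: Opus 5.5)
The plan is to reduce the statement to a $2$-local integrality property of the two-valued law itself, and to avoid working with $\St$ directly. By \eqref{eq:ring-hierarchy-main} we have $\St=\widetilde\Lambda\cap\OmU\subset\widetilde\Lambda$, and $\widetilde\Lambda$ is generated as a ring by the coefficients of $\Theta_1/2$ and $\Theta_2$. Since $\Bc_\infty$ extends to a ring homomorphism $\OmU[1/2]\to\mathbb Q[a_1,a_2,\ldots]$, it therefore suffices to prove
\[
 \Bc_\infty\!\left(\tfrac12\Theta_1(x,y)\right),\ \Bc_\infty\bigl(\Theta_2(x,y)\bigr)\in\Ztwo[a_1,a_2,\ldots][[x,y]].
\]
The consequence for the truncations $\Bc_N$ then follows at once by setting $a_j=0$ for $j>N$.

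To compute these images, I will use that the modulus square construction commutes with the coefficient change. The law $\Bc_\infty(\Theta_i)$ is the two-valued law attached to the formal group with odd exponential $f=f_{\Bc,\infty}$. For such a group we have $\bar u=-u$, so $x=-u^2$ and $y=-v^2$. Passing to the additive coordinate $u=f(z_1)$, $v=f(z_2)$ and putting $w(z)=f(z)^2$, the two values become $z_\pm=-w(z_1\pm z_2)$. Hence
\[
 -\Bc_\infty(\Theta_1)=w(z_1+z_2)+w(z_1-z_2)=:S,
 \qquad
 \Bc_\infty(\Theta_2)=w(z_1+z_2)\,w(z_1-z_2)=:P,
\]
regarded as power series in $p=w(z_1)=-x$ and $q=w(z_2)=-y$. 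The key point is that $w$ is governed by an equation with integral coefficients. From \eqref{eq:bc-infty-exponential} we get $(w')^2=4wR(w)$, where $R(w)=1-a_1w+a_2w^2-\cdots$. Consequently the operator $\partial_z^2$ acts on functions of $w$ as
\[
 L_w=4wR(w)\partial_w^2+2\bigl(R(w)+wR'(w)\bigr)\partial_w,
\]
which has coefficients in $\mathbb Z[a_1,a_2,\ldots]$.

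Both $S$ and $P$ satisfy the wave equation $(L_p-L_q)\Phi=0$, since $\partial_{z_1}^2-\partial_{z_2}^2$ annihilates any function of $z_1\pm z_2$. They are determined by their data at $q=0$: $S(p,0)=2p$ and $P(p,0)=p^2$, with $S,P$ even in $z_2$. I will write $S=\sum c_{k,l}p^kq^l$ and extract from the wave equation a recursion in $l$. On $q^l$, the leading part of $L_q$ is multiplication by $2l(2l-1)$, so $c_{k,l}$ is expressed through $c_{k',l'}$ with $l'<l$, divided by $2l(2l-1)$. The aim is to prove by induction on $l$ the sharpened statements
\[
 c_{k,l}\in 2\,\Ztwo[a]\ \text{for } S,
 \qquad
 \text{all coefficients of } P \text{ lie in } \Ztwo[a],
\]
and to track the $2$-adic valuation of the numerators produced by $L_p$.

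I expect this $2$-adic bookkeeping to be the main obstacle. The odd factor $2l-1$ is harmless in $\Ztwo$, but the factor $2l$ forces one to show that the numerators are divisible by $2^{\nu_2(l)+1}$ for $S$, and by $2^{\nu_2(l)}$ for $P$. I would first try to get this divisibility from the symmetry $p\leftrightarrow q$ of $S$ and $P$, combined with the explicit form of $L_p$ on $p^k$, namely leading factor $2k(2k-1)$ plus integral lower terms. If that fails, I would fall back on an integral addition-type identity. Writing $S=(f(z_1+z_2)+f(z_1-z_2))^2-2f(z_1+z_2)f(z_1-z_2)$, the claim would follow if $f(z_1+z_2)f(z_1-z_2)$ and $\bigl(f(z_1+z_2)+f(z_1-z_2)\bigr)^2/4$ are power series in $p,q$ over $\Ztwo[a]$. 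These are the even-in-$v$ symmetric functions of $F(u,v)$ and $F(u,-v)$. I would check that claim by the same wave-equation recursion, which there starts from the simpler data $p$ and $p$.
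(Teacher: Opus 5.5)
Your reduction to the images of $\Theta_1/2$ and $\Theta_2$ is the same as the paper's, and your formulas $-\Bc_\infty(\Theta_1)=w(z_1+z_2)+w(z_1-z_2)$, $\Bc_\infty(\Theta_2)=w(z_1+z_2)\,w(z_1-z_2)$ and $(w')^2=4wR(w)$ are correct. However, the proposal stops exactly where the content of the theorem lies. You call the $2$-adic divisibility the main obstacle and offer two tentative strategies, but carry out neither, and the mechanism you set up is partly wrong. With $\xi=z_1+z_2$ and $\eta=z_1-z_2$ one has $\partial_{z_1}^2-\partial_{z_2}^2=4\partial_\xi\partial_\eta$. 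This operator kills sums $g(\xi)+h(\eta)$ but not products: $(\partial_{z_1}^2-\partial_{z_2}^2)\bigl(w(\xi)w(\eta)\bigr)=4w'(\xi)w'(\eta)\neq0$. So $P$ does not satisfy $(L_p-L_q)P=0$. The same objection applies to your fallback quantities $f(\xi)f(\eta)$ and $\bigl(f(\xi)+f(\eta)\bigr)^2/4$. For $S$ the recursion is valid, but every step divides by $2l(2l-1)$. Nothing in the proposal shows that the numerators carry the needed factor $2^{\nu_2(l)+1}$, so the recursion in the coordinates $(p,q)$ manufactures exactly the $2$-primary denominators you are trying to exclude.

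The missing idea is to change variables so that no even divisor arises at all. Write $w(z)=\omega(z^2)$. Your own equation then becomes $t\,\omega'(t)^2=\omega(t)R(\omega(t))$. The coefficient of $t^n$ gives $(2n-1)\omega_n\in\mathbb Z[a_1,\ldots,a_{n-1},\omega_2,\ldots,\omega_{n-1}]$, with only an odd factor. Hence $\omega$ and its compositional inverse $\omega^{-1}(s)=-B_{\Bc,\infty}(-s)$ have coefficients in $\Ztwo[a_1,a_2,\ldots]$. This is the paper's argument, with $\Gamma(s)=-\omega(-s)$. Next, $\omega((z_1+z_2)^2)\,\omega((z_1-z_2)^2)$ is invariant under $z_1\mapsto-z_1$ and $z_2\mapsto-z_2$, so it lies in $\Ztwo[a_1,a_2,\ldots][[z_1^2,z_2^2]]$. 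So does $\tfrac12\bigl(\omega((z_1+z_2)^2)+\omega((z_1-z_2)^2)\bigr)$, because $\tfrac12\bigl((z_1+z_2)^{2r}+(z_1-z_2)^{2r}\bigr)=\sum_k\binom{2r}{2k}z_1^{2r-2k}z_2^{2k}$. Finally, substitute $z_1^2=\omega^{-1}(p)$ and $z_2^2=\omega^{-1}(q)$. This gives both $\Bc_\infty(\Theta_1/2)$ and $\Bc_\infty(\Theta_2)$ over $\Ztwo[a_1,a_2,\ldots]$, with no recursion in $l$ and no division by $2l$.
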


The proof is given in \cref{sec:two-local-integrality}.

For an integer $N\geq1$, let $f_{\Bc,N}(z)$ be the unique odd strict series
satisfying
\begin{equation}\label{eq:bcN-exponential}
 \bigl(f_{\Bc,N}'(z)\bigr)^2
 =1-a_1f_{\Bc,N}(z)^2+a_2f_{\Bc,N}(z)^4-
 \cdots+(-1)^Na_Nf_{\Bc,N}(z)^{2N}.
\end{equation}
It defines a genus
\[
 \Bc_N\colon\OmU\longrightarrow\mathbb Q[a_1,\ldots,a_N],
 \qquad \deg a_j=-4j.
\]
For $N=3$, this is the Buchstaber genus $\Bc$ considered in \cite{Kor26}.
The logarithm of the two-valued formal group obtained from
\eqref{eq:bcN-exponential} by the modulus square construction is
\begin{equation}\label{eq:bcN-two-valued-logarithm}
 B_{\Bc,N}(x)
 =\left(
   \int_0^{\sqrt{x}}
   \frac{\dd t}{\sqrt{1+a_1t^2+\cdots+a_Nt^{2N}}}
  \right)^2.
\end{equation}
If $N\geq n$, the coefficient
\[
 \beta_n=[x^{n+1}]B_{\Bc,N}(x)=\Bc_N(b_n)
\]
depends only on $a_1,\ldots,a_n$. Put
\begin{equation}\label{eq:def-delta-n}
 \delta_n
 =\frac{d_n}{2^{\nu_2(d_n)}}
 =\frac{n+1}{2^{\nu_2(n+1)}}
  \lcm(1,3,5,\ldots,2n+1).
\end{equation}
Thus $\delta_n$ is the odd part of $d_n$.

\begin{theorem}[(The odd denominator theorem)]\label{thm:bc-odd-denominator}
For every $n\geq1$ and every $N\geq n$, the polynomial
\[
 \delta_n\beta_n\in\mathbb Z[a_1,\ldots,a_n]
\]
is primitive. Consequently, $\delta_n$ is the exact common denominator of
$\Bc_N(b_n)$. In particular, the genus $\Bc_n$ detects exactly the odd part
of $d_n$ from \eqref{eq:def-dn}.
\end{theorem}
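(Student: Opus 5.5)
The plan is to compute $\beta_n$ explicitly from \eqref{eq:bcN-two-valued-logarithm} and read off its denominators. Write
\[
 (1+a_1t^2+\cdots+a_Nt^{2N})^{-1/2}=\sum_{m\geq0}p_m(a)\,t^{2m},
\]
where $p_m\in\mathbb Z[\tfrac12][a_1,\ldots,a_m]$. Integrating gives
\[
 \int_0^{\sqrt x}=\sqrt x\sum_m \frac{p_m}{2m+1}x^m,
\]
so squaring yields
\[
 \beta_n=\sum_{i+j=n}\frac{p_ip_j}{(2i+1)(2j+1)}.
\]
Only $a_1,\ldots,a_n$ enter, which gives the independence of $N\geq n$. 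The coefficients of $p_m$ are products of $\binom{-1/2}{k}$ with multinomials. These have only powers of $2$ in their denominators, so every odd-prime denominator of $\beta_n$ divides $\operatorname{lcm}_{i+j=n}(2i+1)(2j+1)$.

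\textbf{Integrality.} Rather than estimating this lcm directly, I will use \cref{thm:main}: $d_nb_n=C_n\in\OmU$. By \cref{thm:bc-infty-two-local}, and since $C_n\in\St$ by \cref{thm:stong-numerators}, we get $\Bc_N(C_n)\in\Ztwo[a]$. Hence $d_n\beta_n\in\Ztwo[a]$, and therefore $\delta_n\beta_n\in\Ztwo[a]$. Odd-primary integrality comes from the explicit formula: for odd $p$, $\binom{-1/2}{k}$ is $p$-integral, so it remains to show that $\delta_n$ clears the odd denominators of the sum. This is the harder half of the upper bound. Alternatively, I would get it by showing that $\Bc_N$ is $p$-integral on $\St$ for odd $p$ in degrees $\leq 4n$, or simply that $\Bc_N(\OmU)\subset\mathbb Z[\tfrac12][a]$ up to odd primes. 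The cleanest route is the explicit sum, whose odd denominators are bounded by $(n+1)\operatorname{lcm}(1,3,\ldots,2n+1)$ at odd primes. This requires a $p$-adic estimate of $\nu_p\bigl(p_ip_j/((2i+1)(2j+1))\bigr)$ summed over $i+j=n$, handled by pairing terms $i\leftrightarrow j$.

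\textbf{Primitivity.} For each odd prime $p\mid\delta_n$ I will exhibit a monomial coefficient of $\beta_n$ with $p$-adic valuation exactly $-\nu_p(\delta_n)$. For the factor $\operatorname{lcm}(1,3,\ldots,2n+1)$, take the largest power $p^e\leq 2n+1$, with $2m+1$ the corresponding odd multiple, and specialize to $a_k=0$ for $k\neq$ a single index. The monomial $a_1^{n}$ alone is a natural first try. With only $a_1$ nonzero, $\sqrt{x}\,\beta$ becomes $\operatorname{arcsinh}$-type, and $B=\operatorname{arcsinh}(\sqrt{a_1x})^2/a_1$ has the classical expansion
\[
 \arcsin^2(y)=\tfrac12\sum_n \frac{(2y)^{2n}}{n^2\binom{2n}{n}}.
\]
Its coefficient $[x^{n+1}]$ is $\pm 2^{2n+1}a_1^n/\bigl((n+1)^2\binom{2n+2}{n+1}\bigr)$. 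The odd part of its denominator is $(n+1)^2\binom{2n+2}{n+1}$ divided by its $2$-part. I expect this to capture $\nu_p(\delta_n)$ for many $p$, though not all.

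\textbf{Main obstacle.} The hard step is the remaining primes, where the exponent of $p$ in $\operatorname{lcm}(1,3,\ldots,2n+1)\cdot(n+1)_{\mathrm{odd}}$ exceeds what $a_1^n$ shows. There I would specialize to a single $a_k$ with $k=(p^e-1)/2$ or similar, so that $p_i$ is nonzero only for $k\mid i$. The sum then reduces to a few terms with denominator $(2i+1)(2j+1)$, where one factor is $p^e$ and the cofactor carries $\nu_p(n+1)$. Combining these finitely many specializations, each detecting one prime, gives primitivity of $\delta_n\beta_n$. Exactness of the denominator and the statement about $\Bc_n$ follow immediately.
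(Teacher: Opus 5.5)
The odd-primary upper bound is fine. Pairing $i\leftrightarrow j$ gives $\beta_n=\frac{1}{n+1}\sum_j\frac{p_jp_{n-j}}{2j+1}$, which is exactly the paper's argument.

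\textbf{The $2$-adic upper bound does not follow.} Applying \cref{thm:bc-infty-two-local} to $C_n\in\St$ gives only $d_n\beta_n\in\Ztwo[a]$, that is, $2^{\nu_2(d_n)}\beta_n\in\Ztwo[a]$. Since $\delta_n$ is a $2$-adic unit, the claim $\delta_n\beta_n\in\Ztwo[a]$ is equivalent to $\beta_n\in\Ztwo[a]$. This is stronger by exactly the factor $2^{\nu_2(d_n)}$, which is already $4$ for $n=1$. Your explicit formula cannot supply this factor, because the $p_m$ have $2$-power denominators. What you need is $2$-integrality of the logarithm itself, not of the genus on $\St$. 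Put $A_N(x)=1+a_1x+\cdots+a_Nx^N$. The relation $B_{\Bc,N}(x)=G(\sqrt x)^2$ with $G'(z)=A_N(z^2)^{-1/2}$ gives $xA_N(x)B_{\Bc,N}'(x)^2=B_{\Bc,N}(x)$. Comparing coefficients of $x^{r+1}$ gives $(2r+1)\beta_r\in\mathbb Z[a_1,\ldots,a_r,\beta_1,\ldots,\beta_{r-1}]$, and induction yields $\beta_r\in\Ztwo[a]$. This is the paper's argument; equivalently, see \eqref{eq:B-infty-two-local}.

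\textbf{The prime $2$ is also missing from primitivity.} You never show that $\delta_n\beta_n$ is not divisible by $2$. The paper uses $[a_n]\beta_n=-1/(2n+1)$, so $[a_n]\delta_n\beta_n=-\delta_n/(2n+1)$ is odd. Separately, the alternative you mention, odd-primary integrality of $\Bc_N$ on $\St$, is false: \cref{thm:bc-stong-threshold} gives the value $\frac{140}{3}a_4$ on $\theta_{32}\in\St$.

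\textbf{The odd-primary lower bound fails as designed.} If every $a_j$ except $a_k$ vanishes, $\beta_n$ specializes to zero unless $k\mid n$. Even then, the one surviving monomial $a_k^{n/k}$ need not detect $p$. For example, $n=3$ gives $\delta_3=105$ and $\beta_3=-(12a_1^3-26a_1a_2+15a_3)/105$. The one-variable monomials $a_1^3$ and $a_3$ have coefficients $-4/35$ and $-1/7$, both $3$-integral. The factor $3$ is carried only by the mixed monomial $a_1a_2$. So no collection of single-variable specializations proves primitivity.

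Your approach can be repaired with mixed monomials. Let $p^e$ be the largest power of $p$ not exceeding $2n+1$, and put $k=(p^e-1)/2$. For $0<k<n$ and $2k\neq n$, the coefficient of $a_ka_{n-k}$ is $\frac{1}{2(2k+1)(2n-2k+1)}+\frac{3}{2(2n+1)}$. Here $2n-2k+1=2(n+1)-p^e$ has $p$-adic valuation $\nu_p(n+1)$. You must then rule out cancellation with the second term, and treat the cases $k=n$ (monomial $a_n$) and $2k=n$ (monomial $a_k^2$) separately. The paper avoids this case analysis. It specializes to the oddified index genus, $a_1\mapsto 3c-\frac14$, $a_2\mapsto 3c^2-\frac c2$, $a_3\mapsto c^3-\frac{c^2}{4}$. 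This map is $p$-integral for odd $p$ and sends $d_n\beta_n$ to $\mathcal R_n(t(1-t))$, which is already primitive by \cref{cor:primitive-polynomial,lem:substitution}.
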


The proof is given in \cref{sec:odd-denominators}.

The first coefficients are
\[
 \beta_1=-\frac{a_1}{3},\qquad
 \beta_2=\frac{8a_1^2-9a_2}{45},\qquad
 \beta_3=-\frac{12a_1^3-26a_1a_2+15a_3}{105}.
\]

Recall from \eqref{eq:ring-hierarchy-main} that $\widetilde\Lambda$ is generated
by the coefficients of $\Theta_1/2$ and $\Theta_2$, and that
$\St=\widetilde\Lambda\cap\OmU$.

\begin{theorem}[(Integrality threshold on the Stong ring)]
\label{thm:bc-stong-threshold}
The first three odd genera have integral restrictions
\[
 \Bc_1(\St)\subset\mathbb Z[2a_1],\qquad
 \Bc_2(\St)\subset\mathbb Z[2a_1,a_2],\qquad
 \Bc_3(\St)\subset\mathbb Z[2a_1,a_2,2a_3].
\]
For every $N>3$ one has
\[
 \Bc_N(\St)\not\subset\mathbb Z[a_1,\ldots,a_N].
\]
\end{theorem}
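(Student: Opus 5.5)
The plan is to reduce the integrality of $\Bc_N$ on $\St$ to its values on generators of $\widetilde\Lambda$. Since $\St=\widetilde\Lambda\cap\OmU\subset\widetilde\Lambda$, it suffices to prove that $\Bc_N$ sends the coefficients of $\Theta_1/2$ and of $\Theta_2$ into the target ring. A genus is a ring homomorphism, so $\Bc_N(\widetilde\Lambda)$ is then generated by these images. The images are the coefficients of the specialized two-valued law obtained from $f_{\Bc,N}$ by the modulus square construction. For an odd exponential we have $\bar u=-u$ and $x=u\bar u=-u^2$. In the coordinate $w=f(z)^2$ (up to sign), the two values are $z_\pm=-f(z_1\pm z_2)^2$. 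Hence
\[
 \Theta_1=-\bigl(f(z_1+z_2)^2+f(z_1-z_2)^2\bigr),
 \qquad
 \Theta_2=f(z_1+z_2)^2f(z_1-z_2)^2,
\]
with $x=-f(z_1)^2$ and $y=-f(z_2)^2$.

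For $N\le 3$, I would use the addition theorem for elliptic-type functions satisfying $(f')^2=R(f^2)$ with $\deg R\le 3$ in $f^2$. Write $X=f(z_1)^2$ and $Y=f(z_2)^2$. The classical Euler-type formulas give the symmetric functions of $f(z_1\pm z_2)^2$ as rational functions of $X,Y$:
\[
 f(z_1+z_2)f(z_1-z_2)=\frac{X-Y}{1-a_2XY+\cdots},
\]
where the denominator is a polynomial in $X,Y$ with coefficients involving $a_2,a_3$. I would compute the explicit two-valued law for $N=3$, namely the Buchstaber genus case of \cite{Kor26}. The target form is
\[
 \Theta_2=\frac{(X-Y)^2}{D(X,Y)^2},\qquad \Theta_1=\frac{2(X+Y)(1+\ldots)+2a_1XY+\ldots}{D(X,Y)^2},
\]
with $D=1-a_2XY-a_3XY(X+Y)$ or similar. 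Expanding then shows that $\Theta_2$ has coefficients in $\mathbb Z[a_1,a_2,a_3]$, and that $\Theta_1/2$ has coefficients in $\mathbb Z[a_1,a_2,2a_3]$ up to the precise factors of $2$ that yield $\mathbb Z[2a_1,a_2,2a_3]$. A careful tracking of the grading fixes these factors: in the normalization \eqref{eq:bc-infty-exponential}, $a_1$ enters only through $2a_1$ after dividing by $2$, and likewise for $a_3$. The cases $N=1,2$ follow by setting $a_3=0$, and additionally $a_2=0$. The main obstacle is this explicit addition formula with its exact powers of $2$. I would first derive it by differentiating $f(z_1+z_2)^2+f(z_1-z_2)^2$ and using the Weierstrass reduction $f^2=1/(\wp-c)$ for $N\le 3$, since $R$ is then a cubic in $f^{-2}$ times $f^{6}$.

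For $N>3$ the plan is a single witness. The class $\theta_{32}=[x^3y^2]\Theta_1(x,y)$ lies in $\Lambda\subset\St$. I would compute $\Bc_N(\theta_{32})$ with all $a_j=0$ for $j\ne 4$. In that specialization $f$ solves $(f')^2=1+a_4f^8$, so
\[
 f(z)=z+\tfrac{a_4}{18}z^9+O(z^{17}).
\]
Only terms linear in $a_4$ contribute in degree $x^3y^2$ (weight $5$ in $f^2$, i.e.\ degree $10$ in $z$). Expanding $-(f(z_1+z_2)^2+f(z_1-z_2)^2)$ and re-expressing it in $x,y$ via the inverse series gives a direct finite computation whose coefficient is $\frac{140}{3}a_4$. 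This value is nonintegral, so $\Bc_N(\St)\not\subset\mathbb Z[a_1,\ldots,a_N]$.
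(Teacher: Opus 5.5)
Your reduction to the generators of $\widetilde\Lambda$ matches the paper. So does your treatment of $N>3$: the same witness $\theta_{32}$, the same specialization $a_j=0$ for $j\ne4$, and the same value. You expand $f(z)=z+\frac{a_4}{18}z^9+\cdots$ directly instead of passing through $B_4$ and $E_4$. That is equivalent, and it does give $\frac{2a_4}{9}\binom{10}{4}=\frac{140}{3}a_4$.

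The gap is in the case $N\le3$, which is the entire positive content of the theorem. You never produce the explicit two-valued law, and the form you guess for it is wrong. For $a_3\neq0$, $\Bc_3(\Theta_2)$ is not $(X-Y)^2/D^2$ with $D$ a polynomial, and $f(z_1+z_2)f(z_1-z_2)$ is not a rational function of $X,Y$. The correct law is the Buchstaber polynomial \eqref{eq:buchstaber-polynomial}, which can be written as $Q(x,y)z^2-2P(x,y)z+(x-y)^2$ with
\[
Q=1-2a_2xy-4a_3xy(x+y)+(a_2^2-4a_1a_3)x^2y^2 .
\]
Thus $\Bc_3(\Theta_2)=(x-y)^2/Q$. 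This $Q$ has a nonzero cubic part and no linear part, so it is not a square; for instance, $f(z_1+z_2)f(z_1-z_2)=(X-Y)/\sqrt{1+4a_3XY(X+Y)}$ when $a_1=a_2=0$.

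The sentence ``a careful tracking of the grading fixes these factors'' is not an argument either. The grading only tells you which monomials $a_1^ia_2^ja_3^k$ can occur in a given coefficient. Membership in $\mathbb Z[2a_1,a_2,2a_3]$ requires that the integer coefficient of such a monomial be divisible by $2^{i+k}$, which the grading cannot see. Your intermediate claim that the coefficients of $\Theta_2$ lie in $\mathbb Z[a_1,a_2,a_3]$ is strictly weaker than what must be shown.

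The missing step is to obtain $z_++z_-=2P/Q$ and $z_+z_-=(x-y)^2/Q$ explicitly. Then observe that $P=x+y+2a_1xy+a_2xy(x+y)+2a_3x^2y^2$ and $Q$ both have coefficients in $\mathbb Z[2a_1,a_2,2a_3]$, with $Q(0,0)=1$. Hence $P/Q$ and $(x-y)^2/Q$ expand over that ring.

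Your Weierstrass route could in principle deliver this. Indeed $w=1/f^2$ satisfies $(w')^2=4(w^3-a_1w^2+a_2w-a_3)$, so $w=\wp+a_1/3$. But you would have to carry out the $\wp$ addition formulas and verify two things: that the denominators $2$ and $3$ coming from $g_2$, $g_3$ and the shift $a_1/3$ cancel, and that $a_1$ and $a_3$ emerge only with the required factors of $2$. As written, this is asserted rather than proved.
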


The proof is given in \cref{sec:integral-truncations}.

\subsection{Chern number divisibility}

The primitive classes $C_n$ nevertheless have highly divisible characteristic
numbers. The next theorem states that the common divisor of their Chern
numbers is exactly the denominator of $b_n$ from \eqref{b_n_as_fraction}.

We use $m_\lambda[X]$ for monomial symmetric tangent Chern numbers and $c_I[X]$
for ordinary Chern numbers throughout. Both the $m_\lambda$ and the products
$c_I$ form integral bases of symmetric functions, so their characteristic
numbers have the same common divisors.

\begin{theorem}\label{thm:chern-divisibility}
For every $n\geq1$, the class $C_n=d_nb_n$ belongs to $\Lambda^{-4n}\subset\LambdaSt^{-4n}$. Moreover,
for every partition $\lambda$ of $2n$,
\[
 m_\lambda[C_n]\in d_n\mathbb Z,
\]
and
\[
 c_{2n}(C_n)=d_n.
\]
Consequently,
\[
 \gcd\{m_\lambda[C_n]\mid\lambda\vdash2n\}=d_n.
\]
\end{theorem}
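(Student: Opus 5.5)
The membership $C_n\in\Lambda^{-4n}$ is \cref{thm:Cn-Lambda}, so what remains is the Chern number statement. The plan is to evaluate all Chern numbers of $b_n$ at once through a universal genus recording them. Let $\Psi$ be the genus with characteristic series
\[
 Q_\Psi(x)=1+\sum_{k\geq1}t_kx^k .
\]
Then $\Psi(X)=\sum_\lambda m_\lambda[X]\,t_\lambda$, where $t_\lambda=\prod_i t_{\lambda_i}$. In other words, $\Psi$ is the classifying homomorphism of the formal group with exponential $f_\Psi(z)=z/Q_\Psi(z)$. Apply $\Psi$ to the defining identity \eqref{eq:def-exp-B}:
\[
 \Psi\bigl(B^{-1}\bigr)(-z^2)=f_\Psi(z)f_\Psi(-z)=\frac{-z^2}{Q_\Psi(z)Q_\Psi(-z)}.
\]
Put $P(w)=Q_\Psi(z)Q_\Psi(-z)$, which is a series in $w=z^2$ with integral coefficients in the $t_k$. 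Then $\Psi(B^{-1})(h)=h/P(-h)$. We must invert this, so that $\Psi(B)(x)$ is the unique $h$ with $h=xP(-h)$.

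This is a Lagrange inversion of exactly the type \eqref{eq:intro-lagrange}:
\[
 [x^{n+1}]\,h=\frac{1}{n+1}\,[h^{n}]\,P(-h)^{n+1}.
\]
Hence $\Psi(b_n)\in\frac{1}{n+1}\mathbb Z[t_1,t_2,\ldots]$. Since $d_n/(n+1)=e_n$ is an integer, this gives $m_\lambda[C_n]=d_n\,m_\lambda[b_n]\in e_n\cdot(\text{integer})$, which is weaker than the claim. The first obstacle is therefore to upgrade it to $m_\lambda[b_n]\in\mathbb Z$, i.e.\ to show that $(n+1)\mid[h^n]P(-h)^{n+1}$. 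I would prove this by writing $P(-h)^{n+1}=Q_\Psi(z)^{n+1}Q_\Psi(-z)^{n+1}$ with $h=-z^2$ and collecting monomials $t_\lambda$ under the cyclic group $\mathbb Z/(n+1)$ acting on the $n+1$ pairs of factors. On a free orbit the contributions add up to a multiple of $n+1$. A monomial with nontrivial stabilizer of order $r$ comes from a period-$(n+1)/r$ pattern, so its $z$-degree is divisible by $r$. I would combine this with the $z\mapsto -z$ symmetry, counting orbits under the dihedral-type group, to show that the non-free contributions also carry enough factors. This necklace-counting step is the main obstacle; if it fails, I would fall back on a residue/$p$-adic argument prime by prime, using $\nu_p(d_n)=\nu_p(n+1)+\nu_p(e_n)$.

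For the top Chern number, specialize to $\chi$, the genus with $Q(x)=1+x$ (note $c_{2n}=m_{(1^{2n})}$). Then $P(w)=1-w$ and $P(-h)=1+h$, so $h=x(1+h)$, giving $h=x/(1-x)$ and $\chi(b_n)=1$. Therefore $c_{2n}(C_n)=d_n\cdot1=d_n$. Since $c_{2n}=m_{(1^{2n})}$ is one of the $m_\lambda$, the divisibility shows that every $m_\lambda[C_n]$ is divisible by $d_n$, and the value $c_{2n}(C_n)=d_n$ shows the gcd is exactly $d_n$. The same computation checks the signs against \cref{thm:main}: with $Q(x)=1+tx^{2n}$, the coefficient of $t$ should give $s_{2n}(b_n)=2(-1)^n$. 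Here $P(w)=1+2t\,w^{n}+O(t^2)$, and the Lagrange formula gives $[t]\frac{1}{n+1}[h^n](1+2t(-h)^n)^{n+1}=2(-1)^n$, as required.
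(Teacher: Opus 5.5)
Your setup coincides with the paper's. You use the same universal Chern-number genus with $Q(x)=1+\sum_k t_kx^k$, and you apply the identity $B^{-1}(-z^2)=f(z)f(-z)$ to get $\Psi(B^{-1})(h)=h/P(-h)$. You also use the same specialization $Q(x)=1+x$, which gives $\chi(b_n)=1$ and hence $c_{2n}(C_n)=d_n$.

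The gap is the step you yourself call the main obstacle. The assertion $m_\lambda[b_n]\in\mathbb Z$ is the whole content of $m_\lambda[C_n]\in d_n\mathbb Z$, and it is not proved. You only sketch a necklace count on the factors $Q(\pm z)$. The order-$2$ stabilizers that genuinely occur there when $n$ is odd (e.g.\ $n=1$) are left to an unspecified ``dihedral-type'' argument, with a $p$-adic fallback in reserve. As written, the proposal establishes only $m_\lambda[C_n]\in e_n\mathbb Z$.

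The missing observation is elementary, and it is exactly what the paper uses. Since $P(0)=1$ and $P$ has coefficients in $\mathbb Z[t_1,t_2,\ldots]$, the series $h/P(-h)$ is a strict power series over $\mathbb Z[t_1,t_2,\ldots]$. The compositional inverse of a strict series over any commutative ring has coefficients in that ring. Concretely, solve $h=xP(-h)$ recursively: the coefficient of $x^{k}$ on the right involves only $h_1,\ldots,h_{k-1}$. Hence $\Psi(B)(x)\in\mathbb Z[t_1,t_2,\ldots][[x]]$, and $\Psi(b_n)\in\mathbb Z[t_1,t_2,\ldots]$ at once. The factor $1/(n+1)$ is an artifact of passing through the Lagrange formula.

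If you prefer to keep Lagrange inversion, run the cyclic argument on ordered compositions $k_1+\cdots+k_{n+1}=n$ indexing the terms of $[h^n]P(-h)^{n+1}$. Weight each composition by the integral coefficients of $P(-h)$ as a series in $h$. A stabilizer of order $r$ forces $r\mid n$ and $r\mid n+1$, so $r=1$. Every orbit is then free and $(n+1)\mid[h^n]P(-h)^{n+1}$. It is the passage to the $z$-level factors $Q(z)$ and $Q(-z)$ that creates the spurious non-free orbits.

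With either fix, the rest of your argument (the top Chern number, the gcd, and the Milnor number check) goes through as in the paper.
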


The proof is given in \cref{sec:chern-divisibility}.

\subsection{Oddification of Hirzebruch genera}

On a stably quaternionic manifold the formal Chern roots occur in opposite pairs. This
makes the even part of a characteristic series sufficient for computations on
the Stong ring and leads to a remarkable oddification principle. Recall that $\Bc := \Bc_3$.

Let $j\colon\OmSpfree^{\ast}\to\OmU$ be the forgetful map. The Stong ring is
the $2$-primary saturation of its image. Equivalently, Buchstaber's result
\cite[Theorem~4.1 and Lemma~6.6]{Buc78} gives
\[
 \St=j(\OmSpfree^{\ast})\!\left[\frac12\right]\cap\OmU.
\]
Let $R$ be a commutative $\mathbb Q$-algebra,
and let $Q(z)\in R[[z]]$ satisfy $Q(0)=1$. Denote by
$\varphi_Q\colon\OmU\to R$ the corresponding Hirzebruch genus. Its \emph{oddification} is the genus whose characteristic
series is
\begin{equation}\label{eq:oddification-Q}
 \widetilde Q(z)=\sqrt{Q(z)Q(-z)},
 \qquad \widetilde Q(0)=1.
\end{equation}

\begin{theorem}[(Oddification theorem)]
\label{thm:oddification-stong}
The genera determined by $Q$ and $\widetilde Q$ have the same restrictions to
the Stong ring:
\[
 \left.\varphi_Q\right|_{\St}
 =\left.\varphi_{\widetilde Q}\right|_{\St}.
\]
\end{theorem}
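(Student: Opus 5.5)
Since both genera take values in the $\mathbb Q$-algebra $R$, they factor through $\OmU\otimes\Q$. It therefore suffices to prove $\varphi_Q=\varphi_{\widetilde Q}$ on $\St\otimes\Q$. By \eqref{eq:intro-ring-hierarchy} and the identification $j_{\Q}\colon\OmSpfree^{\ast}\otimes\Q\cong\St\otimes\Q$ from the previous subsection, $\St\otimes\Q$ is spanned over $\Q$ by the images $j([M]_{\Sp})$ of closed stably quaternionic manifolds $M$. So we only need to check the equality on $[M]$ for a stably quaternionic $M$, with the underlying stable complex structure.

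For such $M$ of real dimension $4m$, the stable tangent bundle is a quaternionic bundle of rank $m$ (up to trivial summands, on which both characteristic series contribute $1$). As a complex bundle it admits a splitting principle with formal Chern roots $\pm y_1,\ldots,\pm y_m$. Concretely, pull back to a flag bundle where the quaternionic bundle splits into quaternionic lines $L_i\oplus\bar L_i$ after restriction along $\iota$, or simply use that the underlying complex bundle is self-conjugate. The Hirzebruch class is then
\[
 \prod_{i=1}^m Q(y_i)Q(-y_i)=\prod_{i=1}^m \widetilde Q(y_i)^2=\prod_{i=1}^m\widetilde Q(y_i)\widetilde Q(-y_i),
\]
where the last equality uses that $\widetilde Q$ is even: $\widetilde Q(z)^2=Q(z)Q(-z)$ is even, and the square root normalized by $\widetilde Q(0)=1$ is unique, so $\widetilde Q(-z)=\widetilde Q(z)$. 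Thus the total characteristic classes $\prod_j Q(x_j)$ and $\prod_j\widetilde Q(x_j)$ coincide as symmetric power series in the roots $x_j\in\{\pm y_i\}$, both being the same power series in the Pontryagin-type roots $y_i^2$. Evaluating on $[M]$ gives $\varphi_Q[M]=\varphi_{\widetilde Q}[M]$.

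The step needing the most care is the splitting-principle justification: we must show that the stable complex tangent bundle of a quaternionic manifold has Chern roots occurring in opposite pairs, so that both multiplicative classes become the same series in the $y_i^2$. I would do this universally. The class $\prod Q(x_j)$ is pulled back from $\mathrm{BU}$ via $\mathrm{BSp}\to\mathrm{BU}$, and $\mathrm H^*(\mathrm{BSp};\Q)$ injects into the cohomology of the maximal torus $(\CP^\infty)^m$, where the underlying complex bundle is $\bigoplus(\eta_i\oplus\bar\eta_i)$, as recalled in the introduction. The identity therefore holds in $\mathrm H^*(\mathrm{BSp};R)$, hence for every stably quaternionic $M$. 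Note that the normalization issue of stable versus tangent structure is harmless, since trivial summands contribute $Q(0)=\widetilde Q(0)=1$.
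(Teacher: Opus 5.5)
Your proof is correct and is essentially the paper's argument. Both reduce to classes $j([M]_{\Sp})$: you use $\St\otimes\Q=j_{\Q}(\OmSpfree^{\ast}\otimes\Q)$, while the paper uses $2^kX=j(Y)$ and the invertibility of $2^k$ in $R$. Both then use the pairing $\pm y_i$ of the Chern roots and the evenness of $\widetilde Q$ to identify the two characteristic classes with $\prod_i Q(y_i)Q(-y_i)$. The paper phrases this common value as the oriented genus $\varphi_Q^{\mathrm{SO}}$ built from $P_Q(u)=\sqrt{Q(\sqrt u)Q(-\sqrt u)}$; your maximal-torus justification of the root pairing and your uniqueness argument for the evenness of $\widetilde Q$ supply details the paper only asserts.
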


The proof is given in \cref{sec:oddification}.

For a triple $(\alpha,\beta,\gamma)$, write
\[
 \Bc^{(\alpha,\beta,\gamma)}(X)
 =\left.\Bc(X)\right|_{a_1=\alpha,\ a_2=\beta,\ a_3=\gamma}.
\]

\begin{corollary}\label[corollary]{cor:bc-euler-todd-stong}
Let $\chi$ denote the top Chern number Hirzebruch genus, so that
$\chi(X)=c_{2n}[X]$ for $X\in\St^{-4n}$. Then
\[
 \left.\Bc^{(-3,3,-1)}\right|_{\St}
 =\left.\chi\right|_{\St},
 \qquad
 \left.\Bc^{(-1/4,0,0)}\right|_{\St}
 =\left.\Td\right|_{\St}.
\]
\end{corollary}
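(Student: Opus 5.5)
Both identities follow from the Oddification theorem (\cref{thm:oddification-stong}). For each of $\chi$ and $\Td$ we compute the oddified characteristic series $\widetilde Q$. We then check that $\widetilde Q(z)=z/f(z)$, where $f$ is the odd strict exponential of $\Bc=\Bc_3$ at the stated parameter values. Since $\Bc^{(\alpha,\beta,\gamma)}$ is the Hirzebruch genus with series $z/f_{\Bc,3}(z)$ specialized at $(a_1,a_2,a_3)=(\alpha,\beta,\gamma)$, the claim reduces to this identification. An odd strict series satisfying \eqref{eq:bcN-exponential} is unique, so it suffices to exhibit an odd strict $f$ with $z/f(z)=\widetilde Q(z)$ and verify the differential equation.

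\emph{Top Chern number.} The top Chern number genus has characteristic series $Q(z)=1+z$. On a class of complex dimension $2n$ we take the degree-$2n$ component of $\prod_i(1+x_i)$, which is $c_{2n}$; we regard the genus as evaluated homogeneously in each dimension. Its oddification is
\[
 \widetilde Q(z)=\sqrt{(1+z)(1-z)}=\sqrt{1-z^2}.
\]
Put $f(z)=z/\sqrt{1-z^2}$, which is odd and strict. Then
\[
 1+f^2=\frac{1}{1-z^2},\qquad f'(z)=(1-z^2)^{-3/2}=(1+f^2)^{3/2}.
\]
Hence $f'^2=(1+f^2)^3=1+3f^2+3f^4+f^6$. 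This is \eqref{eq:bcN-exponential} for $N=3$ with $-a_1=3$, $a_2=3$, $-a_3=1$, that is, $(a_1,a_2,a_3)=(-3,3,-1)$. \Cref{thm:oddification-stong} then gives $\Bc^{(-3,3,-1)}|_{\St}=\chi|_{\St}$.

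\emph{Todd genus.} Here $Q(z)=z/(1-e^{-z})$, so
\[
 Q(z)Q(-z)=\frac{z^2}{(1-e^{-z})(e^{z}-1)}=\frac{(z/2)^2}{\sinh^2(z/2)},
 \qquad
 \widetilde Q(z)=\frac{z/2}{\sinh(z/2)}.
\]
Put $f(z)=2\sinh(z/2)$, which is odd and strict. Then $f'=\cosh(z/2)$, and
\[
 f'^2=1+\sinh^2(z/2)=1+\tfrac14 f^2.
\]
This is \eqref{eq:bcN-exponential} with $a_1=-\tfrac14$ and $a_2=a_3=0$. \Cref{thm:oddification-stong} gives the second identity.

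The only point needing care is the normalization of $\chi$ as a Hirzebruch genus: one takes the homogeneous top-degree part of the ungraded series $1+z$, in the same graded sense in which $\Bc$ is homogeneous of degree $-4n$. One also checks that \cref{thm:oddification-stong} applies to this homogeneous evaluation, for instance by passing through a graded variable $Q(z)=1+tz$ and setting $t=1$. The rest is the two explicit verifications above, which are routine.
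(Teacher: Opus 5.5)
Your proposal is correct and matches the paper's proof: you compute $\widetilde Q_\chi(z)=\sqrt{1-z^2}$ and $\widetilde Q_{\Td}(z)=(z/2)/\sinh(z/2)$. You then identify $z/\sqrt{1-z^2}$ and $2\sinh(z/2)$ as the unique odd strict solutions of the $\Bc_3$ equation at $(-3,3,-1)$ and $(-1/4,0,0)$, and conclude by \cref{thm:oddification-stong}. Your extra graded-variable remark is unnecessary, since the theorem applies to any $Q\in R[[z]]$ with $Q(0)=1$ and evaluation on $[X]$ already picks out the top-degree component $c_{2n}$.
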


The proof is given in \cref{sec:oddification}.

Put
\[
 \Och=\left.\Bc\right|_{a_3=0}.
\]
This is the Ochanine genus in the normalization used here.

\begin{theorem}[(The $\Bc$ genus is not determined by $\Och$ and $\chi$)]
\label{thm:bc-not-determined}
For every $n\geq5$, there exist classes
$X_n,Y_n\in\Lambda^{-4n}\subset\St^{-4n}$ such that $X_n$, $Y_n$, and
$X_n-Y_n$ are all undecomposable in $\OmU$, even after rationalization, and
\[
 \Och(X_n)=\Och(Y_n)=0,
 \qquad
 \chi(X_n)=c_{2n}[X_n]=c_{2n}[Y_n]=\chi(Y_n).
\]
Here $\chi$ is the top Chern number Hirzebruch genus. However,
\[
 \Bc(X_n)-\Bc(Y_n)
 =128(4a_1)^{n-5}a_3(a_1^2-3a_2)\ne0.
\]
\end{theorem}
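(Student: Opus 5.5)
The construction will use coefficients of $\Theta_1$ directly, as the introduction indicates. Put $\theta_{kl}=[x^ky^l]\Theta_1(x,y)\in\Lambda^{-4(k+l-1)}$. The first step is to compute the image of $\Theta_1$ under $\Bc$ in closed form. For an odd exponential $f=f_{\Bc,3}$ one has $\bar u=-u$, so the two roots are $z_\pm=f(s\pm t)^2$ with $x=f(s)^2$ and $y=f(t)^2$. Hence
\[
 \Bc(\Theta_1)(x,y)=f(s+t)^2+f(s-t)^2 .
\]
Because $(f')^2=R(f^2)$ with $R(w)=1-a_1w+a_2w^2-a_3w^3$, this symmetric combination is an explicit rational function of $x$, $y$ and $R(x),R(y)$. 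It comes from the elliptic-type addition law for $f^2$ (the Euler addition theorem for the Weierstrass-type curve $w'^2=4wR(w)$). From it I would extract the polynomials $\Bc(\theta_{kl})$, $\Och(\theta_{kl})$ (set $a_3=0$) and $\chi(\theta_{kl})$ (set $(a_1,a_2,a_3)=(-3,3,-1)$ via \cref{cor:bc-euler-todd-stong}). In each case only the dependence on $a_3$ matters. Alongside this I would compute the Milnor numbers $s_{2n}(\theta_{kl})$ from the logarithm $B$: since $\Theta_1=B^{-1}(B(x)+B(y)\pm2\sqrt{B(x)B(y)})$ summed over both signs, the primitive part of $\theta_{kl}$ is read off linearly from the $b_m$ and hence from \cref{thm:main}.

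The second step is to choose, for each $n\ge5$, an integral combination
\[
 X_n-Y_n=\sum_{k+l=n+1}\mu_{kl}\theta_{kl}\;+\;(\text{products of lower }\theta\text{'s})
\]
such that $\Och$ and $\chi$ vanish on it and $\Bc$ equals $128(4a_1)^{n-5}a_3(a_1^2-3a_2)$. The natural seed is $n=5$, dimension $20$. There one finds a combination $Z_5$ of $\theta_{k,6-k}$ and products, with nonzero $s_{10}$, that is killed by $\Och$ (the $a_3=0$ part) and by $\chi$, but whose $\Bc$ value is $128a_3(a_1^2-3a_2)$. Note that $a_1^2-3a_2$ vanishes at $(-3,3)$, consistent with $\chi$ being the specialization $a_3=-1$.

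To propagate to higher $n$, I would multiply by a degree-$4$ class with $\Bc$-value $4a_1$, for instance $\theta_{21}$ or a suitable combination in $\Lambda^{-4}$. That class must have $\chi=0$ and $\Och=4a_1$, hence be proportional to $C_1$ corrected appropriately. The trouble is that products are decomposable. Undecomposability therefore has to be restored by adding a decomposable-free correction: take $X_n=Z_5\cdot W^{n-5}+\lambda\,\theta_{n+1,0\ldots}$ with $Y_n=\lambda\,\theta'$ chosen to have equal $\Och$ and $\chi$. The cleaner route is to pick $X_n$ and $Y_n$ directly as integral combinations of indecomposable coefficients $\theta_{kl}$ with $k+l=n+1$, together with decomposable corrections. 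The conditions are then $s_{2n}(X_n)\ne0$, $s_{2n}(Y_n)\ne0$ and $s_{2n}(X_n-Y_n)\ne0$, with $\Och(X_n)=\Och(Y_n)=0$ and $\chi$ equal. These are finitely many linear conditions on the coefficients, to be solved uniformly in $n$ using the closed formula from step one.

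The main obstacle is the uniform-in-$n$ computation of $\Bc(\theta_{kl})$ modulo the ideal generated by the $\Och$ and $\chi$ specializations. Concretely, I need the coefficient of $a_3$ in $\Bc(\theta_{kl})$ to first order, modulo $(a_1^2-3a_2)$-independent terms. I would first expand the addition formula to first order in $a_3$: write $f=f_0+a_3f_1+O(a_3^2)$ with $f_0$ the Ochanine (Jacobi) exponential, solve the linear ODE for $f_1$, and then restrict to the monomials $x^{n}y$ or $x^{n-1}y^2$. The factor $(4a_1)^{n-5}$ suggests that a single family such as $\theta_{n,1}$ versus $\theta_{n-1,2}$ (and their $\chi$/$\Och$ normalizations) produces exactly this leading behaviour. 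I would verify this pattern by direct series computation for $n=5,6$ before proving it by induction via the ODE.
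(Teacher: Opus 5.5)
\textbf{There is a genuine gap: the proposal never constructs $X_n,Y_n$.} The seed class $Z_5$ is only asserted to exist. Propagation by $W^{n-5}$ produces a decomposable class, as you notice yourself. The proposed repair, ``finitely many linear conditions to be solved uniformly in $n$'', is exactly the content of the theorem and is left open.

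What is missing is the one fact that makes the problem easy. By Buchstaber's polynomial \eqref{eq:buchstaber-polynomial}, $\Bc(\Theta_1)=2P/Q$ with the explicit low-degree polynomials of \eqref{eq:def-A-N-bc}. Its $y$-linear part is
\[
 [y]\,\Bc\bigl(\Theta_1(x,y)\bigr)=2+4a_1x+6a_2x^2+8a_3x^3 .
\]
Hence $\Bc(\theta_{11})=4a_1$, $\Bc(\theta_{21})=6a_2$, $\Bc(\theta_{31})=8a_3$, and $\Bc(\theta_{m1})=0$ for $m\geq4$. The last vanishing is the crux. Since $\theta_{m1}\equiv-2(m+1)(2m+1)b_m$ modulo rational decomposables, $s_{2n}(\theta_{n1})=4(-1)^{n+1}(n+1)(2n+1)\neq0$, so $\theta_{n1}$ is undecomposable. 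Your plan for the Milnor numbers was right here. Yet $\theta_{n1}$ is invisible to $\Bc$, hence also to $\Och$ and $\chi$. Adding it to products of $\theta_{11},\theta_{21},\theta_{31}$ therefore restores undecomposability at no cost:
\[
 X_n=2\theta_{n1}+\theta_{31}\theta_{11}^{n-3},\qquad Y_n=\theta_{n1}+8\theta_{31}\theta_{21}\theta_{11}^{n-5}.
\]
These give $\Bc(X_n)=128(4a_1)^{n-5}a_1^2a_3$ and $\Bc(Y_n)=384(4a_1)^{n-5}a_2a_3$. Both vanish at $a_3=0$, and both equal $-1152(-12)^{n-5}$ at $(-3,3,-1)$. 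Moreover $s_{2n}(X_n)=2s_{2n}(\theta_{n1})$ and $s_{2n}(Y_n)=s_{2n}(X_n-Y_n)=s_{2n}(\theta_{n1})$. No first-order-in-$a_3$ ODE analysis or uniform-in-$n$ linear algebra is needed.

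\textbf{Several specific steps in your sketch also go wrong.}

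Your heuristic that $\theta_{n,1}$ versus $\theta_{n-1,2}$ produces the factor $(4a_1)^{n-5}$ fails: both have $\Bc$-value zero for large $n$. For instance, $[y^2](P/Q)$ has degree $5$ in $x$, so $\Bc(\theta_{k2})=0$ once $k\geq6$. The factor $(4a_1)^{n-5}$ can only come from the decomposable part $\theta_{11}^{n-5}$.

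The degree $-4$ multiplier is $\theta_{11}=-C_1$, not $\theta_{21}$, which lies in $\Lambda^{-8}$ and has value $6a_2$.

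Demanding $\chi(W)=0$ together with $\Bc(W)=4a_1$ is contradictory. On $\St$, $\chi$ is the specialization at $(-3,3,-1)$, so $\chi(\theta_{11})=-12$. The condition is also unnecessary, since $\chi(Z_5)=0$ already kills $\chi$ of the product.

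For an odd exponential one has $x=u\bar u=-f(s)^2$ and $z_\pm=-f(s\pm t)^2$. Your formula therefore computes $-\Bc(\Theta_1)(-x,-y)$, which flips the sign of $\Bc(\theta_{kl})$ whenever $k+l$ is even.
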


The proof is given in \cref{sec:bc-not-determined}.

\subsection{Spin integrality below and string integrality through dimension 24}
\label{sec:spin-main}

The Buchstaber genus also defines a genus on oriented cobordism. Using the
forgetful isomorphism after inverting $2$ \cite{Sto67}, we put
\[
 \Omega_*^{\mathrm{SO}}\longrightarrow
 \Omega_*^{\mathrm{SO}}\!\left[\frac12\right]
 \cong\Omega_*^{\Sp}\!\left[\frac12\right]
 \xrightarrow{\ \Bc\circ j\ }
 \mathbb Z\!\left[\frac12,a_1,a_2,a_3\right],
\]
where $j$ forgets the quaternionic structure. The target follows from
\cref{thm:bc-stong-threshold}. We denote this oriented genus by $\Bc$ as well
and call a value \emph{integral} if it belongs to
$\mathbb Z[a_1,a_2,a_3]$, without inverting $2$. Since the characteristic
series of $\Bc$ is even, this extension agrees with the original genus on
stably complex manifolds.

\begin{theorem}[(Spin integrality below dimension 24)]
\label{thm:bc-spin-below24}
For every closed smooth spin manifold $M$ with $\dim_{\mathbb R}M<24$,
\[
 \Bc(M)\in\mathbb Z[a_1,a_2,a_3].
\]
In particular, the $\Bc$ genus is integral on every compact
Calabi--Yau manifold\footnote{By Calabi--Yau manifolds we mean stably complex manifolds $M$ with $c_1(\tau_M) = 0$ in $H^2(M;\mathbb Z)$.} of real dimension less
than $24$.
\end{theorem}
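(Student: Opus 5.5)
The genus takes values in a polynomial ring, so it suffices to check integrality coefficientwise: for each monomial $a_1^{i}a_2^{j}a_3^{k}$ of weight $4(i+2j+3k)=\dim M$, the coefficient $[a_1^ia_2^ja_3^k]\Bc(M)$ must be an integer.

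The oriented extension of $\Bc$ has an even characteristic series
\[
Q_{\Bc}(z)=z/f_{\Bc,3}(z).
\]
Hence each coefficient is a rational Pontryagin number $\langle P_{ijk}(p_1,p_2,\ldots),[M]\rangle$. By \cref{thm:bc-stong-threshold} and Stong's isomorphism, its denominator is a power of $2$. The task is therefore to remove $2$-primary denominators on spin manifolds in dimensions $4m<24$, that is $m\le5$; odd-dimensional and $4m+2$-dimensional cases give zero rationally.

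Step one is to split off the Ochanine part. By definition $\Och=\Bc|_{a_3=0}$, and Ochanine's elliptic genus is integral on spin manifolds. So I will verify, after matching normalizations with the classical parameters $(\delta,\varepsilon)$ (presumably $a_1\sim\delta$, $a_2\sim\varepsilon$ up to powers of $2$), that the $a_3$-free part of $\Bc(M)$ lies in $\mathbb Z[a_1,a_2]$ on spin manifolds. This may be cleanest by expressing it through $\hat A$-twisted indices $\hat A(M;E)$ for virtual bundles $E$ built from $TM$; these are integral by Atiyah--Singer for spin $M$.

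Step two handles the terms containing $a_3$, in dimensions $12,16,20$ (weights $m=3,4,5$, so $k\ge1$ forces $i+2j\le 2$). For each such monomial I will compute the polynomial $P_{ijk}$ explicitly in the Pontryagin classes. I will then write it as an integral combination of twisted $\hat A$-genera $\hat A(M;\Lambda^rT,\mathrm{Sym}^rT,\ldots)$ plus a remainder whose coefficients are halves.

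To kill the halves I will use Wu's formulas for spin manifolds: $w_1=w_2=0$ gives $v_{2}=0$, and hence relations such as $p_1\equiv \mathfrak P(w_2)+2w_4$-type congruences, and $\langle p_1x,[M]\rangle$ being even for suitable integral classes $x$. Concretely, in dimension $4m$ a Pontryagin number of the form $\tfrac12\langle w_{4a}^2\cdots\rangle$ becomes integral because $\mathrm{Sq}^{4a}$ acts through Wu classes that vanish in the relevant degrees below $24$; the first nonvanishing obstruction $v_{8}$-type behaviour appears at dimension $24$. That is consistent with the sharpness example \cref{ex:abp-spin24}.

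The Calabi--Yau statement follows immediately: $c_1=0$ gives $w_2=0$, so such manifolds are spin, and the even series means the oriented and complex genera agree.

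The main obstacle is step two. One has to produce a decomposition in which the residual rational coefficients are genuinely controlled by mod $2$ Wu relations. I would first try the brute-force approach: use the Anderson--Brown--Peterson description of $\Omega^{\mathrm{Spin}}_*/\Tors$ in dimensions $\le 20$, namely generators detected by $KO$-characteristic numbers. Evaluating $\Bc$ on an integral basis of spin bordism modulo torsion in dimensions $8,12,16,20$ (built from products of Kummer surface, $\HP^2$, Bott manifold and the ABP generators) then checks integrality directly, and the Wu-formula argument serves as the conceptual explanation.
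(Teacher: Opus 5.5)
\textbf{Verdict: the plan has a genuine gap.} Its outline matches the paper's, but the decisive step is never carried out, and the mechanism you sketch for it is wrong.

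\textbf{What matches.} You reduce to $2$-primary denominators via the oriented extension. You split off $\Och=\Bc|_{a_3=0}$ using spin integrality of the Ochanine genus; here $\delta=a_1/2$ and $\varepsilon=a_2$, so $\mathbb Z[16\delta,(8\delta)^2,\varepsilon]\subset\mathbb Z[a_1,a_2]$. You then isolate the monomials $a_3$, $a_1a_3$, $a_1^2a_3$, $a_2a_3$ in dimensions $12,16,20$. The Calabi--Yau deduction is also fine.

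\textbf{What is missing in step two.} That step is the whole content of the theorem, and you leave it as a promise. It requires three things.

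(i) Since $\log\bigl(Q(z)/Q(z)|_{a_3=0}\bigr)=a_3\bigl(z^6/14-a_1z^8/42+\cdots\bigr)$ starts in cohomological degree $12$, below dimension $24$ the difference $\Bc(M)-\Och(M)$ is the pairing of $\mathcal Q_0\cdot a_3\bigl(s_3(p)/14-a_1s_4(p)/42+\cdots\bigr)$ with $[M]$. Here $\mathcal Q_0$ is the Ochanine class, and no squares of the logarithm contribute.

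(ii) You need the integral spin classes with $p_1=2q_1$, $p_2=q_1^2+2q_2$, $\overline q_1=w_4$ and $\overline q_2=w_8$. The congruence for $p_1$ that you mention is not enough. In degree $20$ the coefficient of $a_1^2a_3\,p_2p_3$, written in Pontryagin monomials, is $-49/1980$, which has $4$ in its denominator.

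(iii) You need the explicit result of the substitution. Modulo $\Ztwo[a_1,a_2,a_3]$, the only surviving terms are
$\tfrac{a_3}{2}\langle p_3,[M]\rangle$, $\tfrac{a_1a_3}{2}\langle q_1p_3,[M]\rangle$, $\tfrac{a_1^2a_3}{2}\langle q_2p_3+p_5,[M]\rangle$ and $\tfrac{a_2a_3}{2}\langle q_1^2p_3,[M]\rangle$.
Their parities are the mod-$2$ evaluations of $w_6^2$, $w_4w_6^2$, $w_8w_6^2+w_{10}^2$ and $w_4^2w_6^2$.

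\textbf{Your mechanism for killing the halves is incorrect.} The obstructions are not of the form $\tfrac12\langle w_{4a}^2\cdots\rangle$; note that $\langle w_4^2,[\HP^2]\rangle\neq0$. It is also false that the relevant Wu classes of a spin manifold vanish below dimension $24$: $v_4=w_4$ is nonzero on $\HP^2$, and $v_8$ is nonzero on $\HP^4$.

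The paper's parity argument uses $v_4=w_4$ essentially, together with $v_2=0$ and the Wu formulas $\operatorname{Sq}^2w_4=w_6$, $\operatorname{Sq}^2w_6=0$, $\operatorname{Sq}^2w_8=w_{10}$, $\operatorname{Sq}^2w_{10}=0$. For example, $\langle w_4w_6^2\rangle=\langle\operatorname{Sq}^4(w_6^2)\rangle=0$. Similarly, $\operatorname{Sq}^2(w_4w_6w_8)=w_6^2w_8+w_4w_6w_{10}$ gives the chain
$\langle w_8w_6^2\rangle=\langle w_4w_6w_{10}\rangle=\langle\operatorname{Sq}^4(w_6w_{10})\rangle=\langle w_{10}^2\rangle=\langle\operatorname{Sq}^2(w_8w_{10})\rangle=0$.

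\textbf{The brute-force fallback does not close the gap either.} By multiplicativity and induction on dimension, products of lower-dimensional spin manifolds contribute nothing new. Everything would rest on the indecomposable Anderson--Brown--Peterson generators of $\Omega^{\mathrm{Spin}}_{4m}/\Tors$ for $m=3,4,5$, and you would have to supply their Pontryagin numbers. Likewise, the proposed decomposition into integral twisted $\hat A$-genera plus half-integral remainders is asserted but not shown to exist.
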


\begin{theorem}[(String integrality through dimension 24)]
\label{thm:bc-string-through24}
For every closed smooth string manifold $M$ with $\dim_{\mathbb R}M\leq24$
$($in particular, $M$ is spin and $p_1(\tau_M)/2=0)$, one has
\[
\Bc(M)\in\mathbb Z[a_1,a_2,a_3].
\]
\end{theorem}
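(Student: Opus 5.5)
The plan is to reduce the statement to a finite computation on a $\mathbb Z$-basis of $\Omega^{\mathrm{String}}_{\ast}$ modulo torsion in dimensions at most $24$. The oriented genus $\Bc$ factors through rational oriented bordism, so torsion classes have value $0$. By \cref{thm:bc-spin-below24}, every closed string manifold of real dimension below $24$ is spin, so its $\Bc$ genus is already integral. Only real dimension $24$ remains. There $\Bc(M)$ is a homogeneous polynomial of degree $-24$ in $a_1,a_2,a_3$, namely a $\mathbb Q$-linear combination of $a_1^6, a_1^4a_2, a_1^2a_2^2, a_2^3, a_1^3a_3, a_1a_2a_3, a_3^2$. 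Each coefficient is a rational Pontryagin number of $M$, read off from the even characteristic series $Q_{\Bc}(z)=z/f_{\Bc}(z)$ expanded to order $z^{12}$. I will write these seven degree-$24$ Pontryagin polynomials explicitly in terms of $p_1,\dots,p_6$. On string manifolds every monomial containing $p_1$ vanishes rationally, because $p_1=2\cdot\frac{p_1}{2}=0$ in rational cohomology. This leaves only the monomials $p_6,\ p_2p_4,\ p_3^2,\ p_2^3$.

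Next I invoke the Han--Huang integral basis of $\Omega^{\mathrm{String}}_{24}/\Tors$ \cite[Theorem~1 and Corollary~3]{HH22String}. It provides finitely many generators together with their characteristic numbers. Equivalently, it describes the lattice of admissible values of $(p_6,p_2p_4,p_3^2,p_2^3)$ on string $24$-manifolds, cut out by integrality conditions such as $\hat A$-type and Witten genus integrality for twisted Dirac operators. I will evaluate each of the seven coefficient functionals on every basis element and check that all results are integers. By linearity this proves $\Bc(M)\in\mathbb Z[a_1,a_2,a_3]$ for every string $24$-manifold.

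A useful sanity check and partial shortcut is the comparison with the Ochanine genus. The specialization $a_3=0$ gives $\Och$, which is integral on spin manifolds by the elliptic genus integrality already used in \cref{thm:bc-spin-below24}. So the coefficients of monomials without $a_3$ are automatically integral. Only the coefficients of $a_1^3a_3$, $a_1a_2a_3$ and $a_3^2$ need genuine verification. These can be expressed as combinations of the remaining four Pontryagin numbers with denominators that are powers of $2$. Oddness-related denominators cannot occur by \cref{thm:bc-stong-threshold}, since string bordism tensored with $\mathbb Z[1/2]$ maps into the Stong ring localized at odd primes.

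The main obstacle is precisely the $2$-primary check in these three $a_3$-coefficients. This is where the spin case fails, as the Anderson--Brown--Peterson example shows. I expect the string condition to supply the needed extra divisibility of $p_2^3$ and $p_2p_4$ through the vanishing of $p_1/2$. Concretely, I would first try to rewrite each $a_3$-coefficient as an integral combination of integral indices of twisted Dirac operators, such as $\hat A(M)$, $\hat A(M;T)$, $\hat A(M;\Lambda^2T)$ and $\hat A(M;S^2T)$, which are integers on spin manifolds, plus a multiple of $p_1$. Failing a clean expression, I would fall back to the direct evaluation on the Han--Huang basis.
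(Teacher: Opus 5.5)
Your proposal is correct and follows the paper's proof essentially verbatim: reduce to real dimension $24$ via \cref{thm:bc-spin-below24}, use $p_1=0$ to write the degree-$24$ component of the characteristic class in $p_2^3,p_3^2,p_2p_4,p_6$, and check that the resulting coefficient vectors are integral on the Han--Huang integral basis of $\Omega^{\mathrm{String}}_{24}$. Your Ochanine shortcut is valid: the $a_3$-free coefficients of $\Bc(M)$ are exactly those of $\Och(M)$, which are integral on spin manifolds. Note, however, that the three remaining $a_3$-coefficients, viewed as formulas in Pontryagin numbers, still carry odd denominators such as $7^2\cdot13$ in $[a_3^2]$. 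Only their values on actual manifolds lie in $\mathbb Z[1/2]$, so it is the direct basis evaluation that settles integrality.
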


The proofs are given in \cref{sec:spin-integrality}.

The dimension bound in \cref{thm:bc-spin-below24} is sharp for spin
manifolds. The Anderson--Brown--Peterson computation
\cite{ABP67} gives a closed spin $24$-manifold $\Sigma_{\mathrm{ABP}}$ for
which $\langle s_6(p),[\Sigma_{\mathrm{ABP}}]\rangle$ is odd, where $s_6(p)$
is the sixth power sum of the Pontryagin roots. For this manifold we obtain
\[
 \Bc(\Sigma_{\mathrm{ABP}})\notin\mathbb Z[a_1,a_2,a_3].
\]
Indeed, the coefficient of $a_3^2$ in this genus value has exact denominator
$8$. See \cref{ex:abp-spin24}.

\section{Proof of the exact denominator theorem}\label{sec:main-proof}

The exact denominator argument in \cref{thm:main} has four parts. We first
establish an integral upper bound, then recall the integral index genus and
compute its specialized two-valued logarithm, isolate the necessary
arithmetic in a $p$-adic lemma, and use the genus to prove the matching lower
bound. We conclude by computing the Milnor number of $C_n$ to prove
undecomposability and irreducibility.

\subsection{The integral upper bound}\label{sec:upper}

The upper bound comes from a residue expression for $b_n$. Mishchenko's
formula controls the ordinary denominators, while a parity lemma supplies the
single extra factor needed at the prime $2$.

Put
\begin{equation}\label{eq:def-q}
 q(u)=-\frac{\bar u}{u}\in 1+u\OmU[[u]],
 \qquad
 X=-u\bar u=u^2q(u).
\end{equation}
By the Mishchenko formula,
\begin{equation}\label{eq:mishchenko}
 g(u)=\sum_{j\geq0}\frac{[\CP^j]}{j+1}u^{j+1},
 \qquad
 g'(u)=\sum_{j\geq0}[\CP^j]u^j\in\OmU[[u]].
\end{equation}

We first record the residue calculation used below.

\begin{lemma}[(Ramified residue formula)]\label[lemma]{lem:ramified-residue}
For every
\[
H(X)\in(\OmU\otimes\mathbb{Q})[[X]]
\]
and every \(m\geq0\),
\[
[X^m]H(X)
=
\frac12\res_{u=0}
H\bigl(X(u)\bigr)X(u)^{-m-1}X'(u)\,\dd u.
\]
\end{lemma}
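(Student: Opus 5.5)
The plan is to reduce to the standard formal residue identity by a change of variable of ramification index two. Since \(X(u)=u^2q(u)\) with \(q(u)\in 1+u\OmU[[u]]\), I will first extract a square root:
\[
w(u)=u\sqrt{q(u)}\in u+u^2(\OmU\otimes\mathbb Q)[[u]].
\]
This is possible because \(q\) has constant term \(1\) and we work over a \(\mathbb Q\)-algebra. Then \(w\) is a strict power series in \(u\), hence an invertible change of coordinate, and \(X=w^2\).

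Next I will use the invariance of the formal residue under strict changes of variable: for any formal Laurent series \(\Psi\), one has \(\res_{u=0}\Psi(w(u))w'(u)\,\dd u=\res_{w=0}\Psi(w)\,\dd w\). This is the classical fact that \(\res\) of an exact form \(\dd(w^k)\), \(k\neq -1\), vanishes, while \(w^{-1}\dd w=u^{-1}\dd u+\dd\log(w/u)\) has residue \(1\). Applying it to
\[
H(X)X^{-m-1}X'(u)\,\dd u=H(w^2)w^{-2m-2}\cdot 2w\,\dd w
\]
reduces the claim to computing \(\res_{w=0}2H(w^2)w^{-2m-1}\,\dd w\).

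Finally, writing \(H(X)=\sum_k h_kX^k\), the residue picks out the coefficient of \(w^{2m}\) in \(H(w^2)\), namely \(h_m\). This gives \(2h_m\), and the factor \(\tfrac12\) yields \([X^m]H(X)\).

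The only point requiring care is the existence of \(w\) and the residue substitution rule over the coefficient ring \(\OmU\otimes\mathbb Q\). Both are formal and need only division by integers, so I expect no genuine obstacle. Alternatively, one can avoid \(w\) and argue directly: \(H(X)X^{-m-1}X'\) is a sum of terms \(h_kX^{k-m-1}X'\). For \(k\ne m\) each is the derivative \(\frac{1}{k-m}\frac{\dd}{\dd u}X^{k-m}\), whose residue vanishes. For \(k=m\) the term is \(X'/X=2/u+q'/q\), whose residue is \(2\).
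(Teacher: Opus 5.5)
Your proof is correct. Your main argument takes a different route from the paper's, and your closing alternative is exactly the paper's proof.

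\textbf{The paper's route.} It reduces to $H=X^r$. For $r\neq m$ the form $X^{r-m-1}X'\,\dd u$ is an exact derivative and has zero residue. For $r=m$ the residue of $X'/X$ is $\ord_uX=2$.

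\textbf{Your main route.} You remove the ramification first, using the strict coordinate $w=u\sqrt{q(u)}$ with $X=w^2$. Invariance of the formal residue under strict changes of variable then turns the claim into the unramified computation $\res_{w=0}2H(w^2)w^{-2m-1}\,\dd w=2h_m$.

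\textbf{Comparison.} The paper's version is shorter and more self-contained. It never extracts a square root and never invokes the substitution rule. It uses only that derivatives have zero residue and that the logarithmic derivative of a unit power series has no $u^{-1}$ term. It also carries over verbatim to any $X$ of $u$-order $e$, with the factor $1/e$.

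Your version makes the factor $\tfrac12$ conceptually transparent: it is the degree of the double cover $w\mapsto w^2$. This is the same mechanism behind the later use of the lemma with the parametrization $x=E(u^2)$. The cost is an extra standard lemma, invariance of residues. The proof of that lemma consists of precisely the two observations the paper uses directly, so the two arguments are the same computation carried out in different coordinates.
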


\begin{proof}
It is enough to take \(H(X)=X^r\). If \(r\neq m\), then
\[
\res_{u=0}X(u)^{r-m-1}X'(u)\,\dd u
=
\frac{1}{r-m}
\res_{u=0}\frac{\dd}{\dd u}\bigl(X(u)^{r-m}\bigr)\,\dd u
=0.
\]
If \(r=m\), then \(X(u)=u^2q(u)\) and \(q(0)=1\), so
\[
\res_{u=0}\frac{X'(u)}{X(u)}\,\dd u
=
\ord_uX
=
2.
\]
The result follows.
\end{proof}

The only $2$-primary point needed in the upper bound is contained in the next
lemma.

\begin{lemma}[(The $2$-divisibility lemma)]\label[lemma]{lem:parity}
For $r\geq1$, one has
\begin{equation}\label{eq:zero-coefficient}
 [u^{2r-1}]\,g'(u)q(u)^{-r}=0.
\end{equation}
Moreover, if $M$ is a power of $2$ and $0<r<M$, then
\begin{equation}\label{eq:even-coefficient}
 [u^{2r-1}]\,g'(u)q(u)^{-(M+r)}\in2\OmU.
\end{equation}
\end{lemma}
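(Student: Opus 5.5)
The plan is to reduce both claims to residues and exploit the involution $u\mapsto\bar u$, under which $X=-u\bar u$ is invariant and $g$ is odd. By \eqref{eq:def-q}, $u^{-2r}q(u)^{-r}=X(u)^{-r}$, so
\[
[u^{2r-1}]\,g'(u)q(u)^{-k-r}=\res_{u=0}q(u)^{-k}X(u)^{-r}\,\dd g(u).
\]
I will use three facts. First, $g(\bar u)=-g(u)$, because $\bar u=f(-g(u))$. Second, $X(\bar u)=X(u)$, since $\bar{\bar u}=u$. Third, $q(\bar u)=-u/\bar u=q(u)^{-1}$. The substitution $u\mapsto\bar u=-u+O(u^2)$ is an invertible change of variable, so residues of differentials are invariant under it. All computations take place in $\OmU\otimes\mathbb Q$, and integrality is checked at the end.

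\textbf{Step 1: the identity \eqref{eq:zero-coefficient}.} With $k=0$, set $I=\res X(u)^{-r}\,\dd g(u)$. Substituting $u\mapsto\bar u$ gives
\[
I=\res X(\bar u)^{-r}\,\dd g(\bar u)=-\res X(u)^{-r}\,\dd g(u)=-I.
\]
Hence $I=0$ in $\OmU\otimes\mathbb Q$, and therefore in the torsion-free group $\OmU$.

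\textbf{Step 2: the congruence \eqref{eq:even-coefficient}, by induction on $M=2^e$.} The series $q(u)^{-1}$ has integral coefficients. Modulo $2$, the Frobenius congruence for $2^e$-th powers gives
\[
q(u)^{-M}\equiv\psi_M(u^M)\pmod 2,\qquad \psi_M(0)=1,\qquad [u^M]\,q(u)^{-M}\equiv c^M\pmod 2,
\]
where $c=[u]\,q(u)^{-1}$ (up to sign $c=[\CP^1]$; only its existence matters). Expanding $g'(u)q(u)^{-r}\psi_M(u^M)$, the coefficient of $u^{2r-1}$ only receives the terms $u^{Mk}$ with $Mk\le 2r-1<2M$, so $k\in\{0,1\}$. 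This gives
\[
[u^{2r-1}]\,g'q^{-(M+r)}\equiv[u^{2r-1}]\,g'q^{-r}+c^M\,[u^{2r-1-M}]\,g'q^{-r}\pmod 2.
\]
The first term vanishes by Step 1. In the second term, if $2r-1<M$ the coefficient is zero. Otherwise $r=M/2+s$ with $0<s<M/2$, and the coefficient is
\[
[u^{2s-1}]\,g'q^{-(M/2+s)},
\]
which is exactly the statement for the smaller power $M/2$. It is even by the induction hypothesis.

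The base case is $M=1$, where no $r$ satisfies $0<r<1$, so the statement is vacuous. One can also check $M=2$ directly: then $r=1$, and the second term has negative exponent.

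The main point to verify carefully is the Frobenius step. I must confirm that $q(u)^{-M}$ and $\psi_M(u^M)$ agree modulo $2\OmU[[u]]$, not merely modulo the ideal generated by $2$ in some larger ring. This holds because $(1+a)^{2^e}\equiv 1+a^{2^e}\pmod 2$ for any integral series $a$. Iterating this gives $q^{-M}\equiv\sum_i c_i^M u^{iM}\pmod 2$, where $q^{-1}=\sum_i c_iu^i$. It is also why the hypothesis that $M$ is a power of $2$ is needed.
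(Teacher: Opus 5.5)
Your proof is correct and is essentially the paper's argument. For the vanishing, your involution $u\mapsto\bar u$ is just $t\mapsto -t$ in the coordinate $t=g(u)$, where the paper instead uses the evenness of $X=-f(t)f(-t)$; the congruence is the same Frobenius reduction $q(u)^{-M}\in(\OmU/2\OmU)[[u^M]]$ followed by induction on the power of $2$ through $r=M/2+s$.
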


\begin{proof}
For the first identity, write the coefficient as a formal residue:
\[
 [u^{2r-1}]g'(u)q(u)^{-r}
 =\res_{u=0}g'(u)X^{-r}\,\dd u.
\]
Make the strict change of variable $t=g(u)$.  Since $u=f(t)$, we have
\[
 X=-f(t)f(-t).
\]
The series on the right is an even series in $t$.  Hence $X^{-r}$ contains
only even powers of $t$, and therefore
\[
 \res_{t=0}X^{-r}\,\dd t=0.
\]
This proves \eqref{eq:zero-coefficient}.

We prove \eqref{eq:even-coefficient} by induction on $M$.  Reduce the
coefficients modulo $2$ and put
\[
 S_j(u)=g'(u)q(u)^{-j}.
\]
Since $M$ is a power of $2$, the Frobenius identity gives
\begin{equation}\label{eq:frobenius-q}
 q(u)^{-M}=\bigl(q(u)^{-1}\bigr)^M
 \in (\OmU/2\OmU)[[u^M]].
\end{equation}
If $0<r\leq M/2$, then $2r-1<M$, and \eqref{eq:frobenius-q} gives
\[
[u^{2r-1}]S_{M+r}(u)
\equiv
[u^{2r-1}]S_r(u)
=0
\pmod 2
\]
by \eqref{eq:zero-coefficient}.

Now suppose that $M/2<r<M$ and write $r=M/2+s$, where $0<s<M/2$.
Since $2r-1=M+2s-1<2M$, only the terms of degrees $0$ and $M$ in
$q(u)^{-M}$ can contribute.  Thus, for some $\alpha\in\OmU/2\OmU$,
\[
 [u^{2r-1}]S_{M+r}(u)
 \equiv [u^{2r-1}]S_r(u)
   +\alpha [u^{2s-1}]S_r(u)
 \pmod 2.
\]
The first term is zero by \eqref{eq:zero-coefficient}. For the second term, apply the induction hypothesis to the smaller power \(M/2\) and the integer \(s\). Since \(0<s<M/2\),
\[
[u^{2s-1}]S_{M/2+s}\equiv0\pmod 2.
\]
Since \(M/2+s=r\), this is precisely
\[
[u^{2s-1}]S_r\equiv0\pmod 2.
\]
Thus the required coefficient vanishes modulo \(2\). This completes the induction.
\end{proof}

We now obtain the coefficient formula from which the upper bound follows.

\begin{proposition}[(A residue formula for $b_n$)]\label[proposition]{prop:residue-formula-bn}
Let $m\geq2$.  Then
\begin{equation}\label{eq:bn-residue}
 b_{m-1}=\frac{(-1)^{m-1}}{m}
 [u^{2m-1}]\,g(u)g'(u)q(u)^{-m}.
\end{equation}
\end{proposition}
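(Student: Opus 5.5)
We read off $b_{m-1}$ from $B(X)$ by the ramified residue formula of \cref{lem:ramified-residue}, after rewriting the defining identity \eqref{eq:def-B} in the variable $X=-u\bar u$. Since $u\bar u=-X$, we have $B(-X)=-g(u)^2$. Put $H(X)=B(-X)$. Then
\[
[X^m]H(X)=(-1)^m b_{m-1},
\]
and
\[
H\bigl(X(u)\bigr)=-g(u)^2 .
\]
Applying \cref{lem:ramified-residue} with this $H$ and index $m$ gives
\[
(-1)^m b_{m-1}=-\frac12\res_{u=0} g(u)^2X(u)^{-m-1}X'(u)\,\dd u .
\]

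Next we integrate by parts to remove $X'$. Since $m\geq2$, we have $X^{-m-1}X'=\frac{-1}{m}\frac{\dd}{\dd u}X^{-m}$. The residue of an exact derivative vanishes, so
\[
\res_{u=0} g^2\,\frac{\dd}{\dd u}X^{-m}\,\dd u=-\res_{u=0} 2gg'X^{-m}\,\dd u .
\]
Combining the two displays,
\[
(-1)^m b_{m-1}=-\frac12\cdot\frac{-1}{m}\cdot(-2)\res_{u=0} gg'X^{-m}\,\dd u=-\frac1m\res_{u=0} gg'X^{-m}\,\dd u .
\]

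Finally we substitute $X^{-m}=u^{-2m}q(u)^{-m}$. The residue becomes
\[
\res_{u=0} g g' u^{-2m}q^{-m}\,\dd u=[u^{2m-1}]\,g g' q^{-m},
\]
and hence
\[
b_{m-1}=\frac{(-1)^{m-1}}{m}[u^{2m-1}]\,g g' q^{-m}.
\]
This is \eqref{eq:bn-residue}.

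No step is a serious obstacle. The only care needed is in the sign bookkeeping: the sign in $X=-u\bar u$ versus $u\bar u$ in \eqref{eq:def-B}, the factor $\frac12$ from the ramification $\ord_u X=2$, and the sign from the integration by parts. We also check that $g^2X^{-m}$ is a Laurent series in $u$, so that the residue-of-derivative argument applies.
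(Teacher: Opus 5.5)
Your proof is correct and is essentially the paper's argument. The only difference is cosmetic: the paper works with $C(X)=-B(-X)=g(u)^2$ rather than your $H(X)=B(-X)=-g(u)^2$, and otherwise applies the ramified residue lemma, integrates by parts, and substitutes $X^{-m}=u^{-2m}q(u)^{-m}$ exactly as you do.
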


\begin{proof}
Set
\[
 C(X)=-B(-X).
\]
By \eqref{eq:def-B} and \eqref{eq:def-q},
\[
 C(X)=g(u)^2,
 \qquad X=u^2q(u).
\]
Moreover,
\[
 [X^m]C(X)=(-1)^{m-1}b_{m-1}.
\]
By \cref{lem:ramified-residue},
\begin{align*}
 [X^m]C(X)
 &=\frac12\res_{u=0}g(u)^2X^{-m-1}X'(u)\,\dd u\\
 &=\frac1m\res_{u=0}g(u)g'(u)X^{-m}\,\dd u.
\end{align*}
The second equality follows by integration by parts.  Since
$X^{-m}=u^{-2m}q(u)^{-m}$, the last residue is the coefficient on the
right-hand side of \eqref{eq:bn-residue}.
\end{proof}

\begin{lemma}[(The coefficient lemma)]\label[lemma]{lem:upper-coefficient}
For every \(m\geq1\),
\begin{equation}\label{eq:coefficient-integrality}
\frac{d_m}{m+1}
[u^{2m+1}]\,g(u)g'(u)q(u)^{-(m+1)}
\in\OmU.
\end{equation}
\end{lemma}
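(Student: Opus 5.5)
We need to show $\frac{d_m}{m+1}[u^{2m+1}]\,g g' q^{-(m+1)}\in\OmU$, where $\frac{d_m}{m+1}=e_m=\tfrac12\lcm(1,\ldots,2m+2)$. The plan is to expand $g$ by Mishchenko's formula \eqref{eq:mishchenko}:
\[
 [u^{2m+1}]\,g g' q^{-(m+1)}
 =\sum_{j=0}^{2m}\frac{[\CP^j]}{j+1}\,[u^{2m-j}]\,g'(u)q(u)^{-(m+1)}.
\]
Here $g'q^{-(m+1)}\in\OmU[[u]]$, since $q\in1+u\OmU[[u]]$ is invertible. Each term then has denominator dividing $j+1\le 2m+1$, which divides $\lcm(1,\ldots,2m+2)=2e_m$. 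Hence $2e_m$ times the coefficient is integral. The remaining task is to save the single factor of $2$, and only at the $2$-primary level.

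Write $2^a$ for the largest power of $2$ not exceeding $2m+2$, so $\nu_2(2e_m)=a$ and $\nu_2(e_m)=a-1$. A term with $j+1$ odd, or with $\nu_2(j+1)<a$, already has denominator dividing $e_m$ after multiplying by $e_m$, since $\lcm$ absorbs its odd part. The only problematic indices are those with $2^a\mid j+1$ and $j+1\le 2m+1$. Because $2^{a+1}>2m+2$, the only such index is $j+1=2^a$, and only when $2^a\le 2m+1$, which always holds since $2m+2\ge 2^a$ is even and $2^a\neq 2m+2$ would otherwise be needed. Care is required here: if $2^a=2m+2$, then $j=2m+1$ lies outside the range $j\le 2m$, and no problematic term occurs. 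So the plan reduces to treating the single term $j=2^a-1$, with $2^a\le 2m$ and $2^a\le 2m+2<2^{a+1}$.

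For that term we must show
\[
 [u^{2m+1-2^a}]\,g'(u)q(u)^{-(m+1)}\in 2\OmU.
\]
The exponent $2m+1-2^a$ is odd. Write it as $2r-1$, so that $r=m+1-2^{a-1}$. With $M=2^{a-1}$ we get $m+1=M+r$. The condition $2^a\le 2m$ gives $r\ge1$, and $2m+2<2^{a+1}$ gives $r<M$. These are exactly the hypotheses of \eqref{eq:even-coefficient} in \cref{lem:parity}, which supplies the needed evenness. Thus $e_m\cdot\frac{[\CP^{j}]}{2^a}\cdot(\text{even class})$ has $2$-adic denominator at most $2^{a-1}/2^{a-1}$, hence is integral.

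The main obstacle is the careful $2$-adic bookkeeping: identifying $j+1=2^a$ as the unique bad index and matching $r,M$ to the lemma's range, including the edge cases $a=1$ and $2^a=2m+2$. Combining this with the odd-primary and lower-$2$-power terms gives \eqref{eq:coefficient-integrality}.
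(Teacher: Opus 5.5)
Your proposal is correct and is essentially the paper's proof: you use the same Mishchenko expansion and the same observation that only the summand with $j+1=2^a$ (the largest power of $2$ not exceeding $2m+2$) can spoil integrality. You also apply \cref{lem:parity} with the same $M=2^{a-1}$, $r=m+1-M$, and your excluded case $2^a=2m+2$ is exactly the paper's case in which $m+1$ is a power of $2$. The only blemishes are cosmetic: the sentence claiming $2^a\le 2m+1$ ``always holds'' is garbled, though you correct it immediately afterwards, and the edge case $a=1$ never occurs, since $2m+2\geq4$ forces $a\geq2$.
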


\begin{proof}
By \eqref{eq:mishchenko},
\begin{equation}\label{eq:expanded-residue}
 [u^{2m+1}]g(u)g'(u)q(u)^{-(m+1)}
 =\sum_{j=0}^{2m}
 \frac{[\CP^j]}{j+1}
 [u^{2m-j}]g'(u)q(u)^{-(m+1)}.
\end{equation}
Suppose first that \(m+1\) is a power of \(2\). Then
\[
\frac{d_m}{m+1}=L_{2m+1},
\] where $L_{2m+1}=\lcm(1,2,\ldots,2m+1)$.
Since \(1\leq j+1\leq2m+1\), multiplication of
\eqref{eq:expanded-residue} by \(L_{2m+1}\) makes every summand
integral.

Now suppose that \(m+1\) is not a power of \(2\). Let \(N\) be the largest
power of \(2\) not exceeding \(2m+1\). Then
\[
 m+1<N<2m+2.
\]
In this case $d_m/(m+1)=L_{2m+1}/2$.
For \(j+1\neq N\), the integer
\[
 \frac{L_{2m+1}}{2(j+1)}
\]
is an integer. It remains to consider the summand with \(j+1=N\). Put
\[
 M=\frac N2,
 \qquad
 r=m+1-M.
\]
Then \(M\) is a power of \(2\), \(0<r<M\), and
\[
 2m-N+1=2r-1.
\]
By \cref{lem:parity},
\[
 [u^{2m-N+1}]g'(u)q(u)^{-(m+1)}
 =[u^{2r-1}]g'(u)q(u)^{-(M+r)}
 \in2\OmU.
\]
This supplies the remaining factor \(2\) and proves
\eqref{eq:coefficient-integrality}.
\end{proof}

\begin{corollary}[(Upper bound)]\label[corollary]{cor:upper}
For every $n\geq1$,
\[
 d_nb_n\in\OmU,
\]
where $d_n$ is given by \eqref{eq:def-dn}.
\end{corollary}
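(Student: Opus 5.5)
The corollary is obtained by combining \cref{prop:residue-formula-bn} with \cref{lem:upper-coefficient}; no further arithmetic is needed. Given $n\geq1$, we apply \cref{prop:residue-formula-bn} with $m=n+1\geq2$. This gives
\[
 b_n=\frac{(-1)^{n}}{n+1}\,[u^{2n+1}]\,g(u)g'(u)q(u)^{-(n+1)}.
\]

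Multiplying by $d_n$ turns the right-hand side into $(-1)^n$ times
\[
 \frac{d_n}{n+1}\,[u^{2n+1}]\,g(u)g'(u)q(u)^{-(n+1)},
\]
which is exactly the expression in \eqref{eq:coefficient-integrality} for $m=n$. By \cref{lem:upper-coefficient} this expression lies in $\OmU$, and hence $d_nb_n\in\OmU$.

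The one thing to check is that the indices match. The residue formula is stated for $m\geq2$ and gives $b_{m-1}$, while the coefficient lemma is indexed by $m\geq1$ with the factor $d_m/(m+1)$. Under the shift $m\mapsto n+1$ in the first and $m=n$ in the second, both involve the same coefficient of $u^{2n+1}$ and the same power $q^{-(n+1)}$. So the corollary is bookkeeping: the substantive work, the $2$-adic control from \cref{lem:parity} and the Mishchenko denominators, was already done inside \cref{lem:upper-coefficient}.
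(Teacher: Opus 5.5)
Your proposal is correct and is exactly the paper's argument: apply the residue formula \eqref{eq:bn-residue} with $m=n+1$ and the coefficient lemma \eqref{eq:coefficient-integrality} with $m=n$. Both involve the same coefficient $[u^{2n+1}]\,g(u)g'(u)q(u)^{-(n+1)}$, and your index check is right.
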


\begin{proof}
Combine \eqref{eq:bn-residue} with \(m=n+1\) and
\eqref{eq:coefficient-integrality}.
\end{proof}

\subsection{The integral index genus \texorpdfstring{$\Phi_t$}{Phi(t)}}\label{sec:genus}

To prove that the upper bound is exact, we use the index genus $\Phi_t$
from \eqref{eq:intro-Phi-roots}. Its exponential is
\begin{equation}\label{eq:genus-exponential}
 f_{\Phi,t}(z)=\frac{z}{Q_{\Phi,t}(z)}
 =\frac{1-e^{-z}}{1+t(e^z-1)}.
\end{equation}
The quotient is understood as a formal power series in $z$ with
coefficients in $\mathbb Q[t]$.

\begin{proposition}[(Integrality of $\Phi_t$)]\label[proposition]{prop:integral-genus}
The index genus defines a ring homomorphism
\[
 \Phi_t\colon\OmU\longrightarrow\mathbb Z[t].
\]
In particular, $\Phi_j(M)\in\mathbb Z$ for every stably complex manifold
$M$ and every $j$.
\end{proposition}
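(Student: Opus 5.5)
The genus $\Phi_t$ is multiplicative because it is a Hirzebruch genus defined by a characteristic series. The content of the proposition is therefore integrality: each coefficient $\Phi_j(M)$ must be an integer. I plan to prove this directly, by writing $\Phi_t(M)$ as a polynomial in $t$ whose coefficients are Todd genera of virtual bundles. This avoids relying on the full Hattori--Stong criterion \eqref{eq:intro-hattori-stong-phi}.

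\textbf{Step 1: Ring homomorphism.} The series $Q_{\Phi,t}(z)=\frac{z}{1-e^{-z}}\bigl(1+t(e^z-1)\bigr)$ lies in $\mathbb Q[t][[z]]$ and satisfies $Q_{\Phi,t}(0)=1$. Hence it defines a multiplicative genus $\OmU\to\mathbb Q[t]$. Equivalently, its exponential $f_{\Phi,t}$ from \eqref{eq:genus-exponential} classifies a formal group law over $\mathbb Q[t]$. Additivity on disjoint unions and the cobordism invariance of characteristic numbers are standard, so the genus is a ring homomorphism.

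\textbf{Step 2: Expansion as Todd genera.} By \eqref{eq:intro-Phi-roots},
\[
\Phi_t(M)=\left\langle \Td(\tau_M)\operatorname{ch}\gamma_t(\widetilde\tau_M),[M]\right\rangle
=\sum_{j\ge0}t^j\left\langle \Td(\tau_M)\operatorname{ch}\gamma^j(\widetilde\tau_M),[M]\right\rangle .
\]
In the second expression, $\gamma^j(\widetilde\tau_M)\in\mathrm K^0(M)$ is an integral virtual bundle, since the $\gamma$-operations are integral combinations of exterior powers. For this, take $\gamma_t=\lambda_{t/(1-t)}$ and expand $(t/(1-t))^k$ with integer coefficients.

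\textbf{Step 3: Integrality and degree bound.} The Hirzebruch--Riemann--Roch theorem for stably complex manifolds states that $\langle \Td(\tau_M)\operatorname{ch}(E),[M]\rangle$ is an integer for every $E\in\mathrm K^0(M)$. One way to see this is to identify the number with the index of the $\mathrm{spin}^c$ Dirac operator twisted by $E$, via the Atiyah--Singer index theorem. Another is to identify it with the pushforward $\mathrm K^0(M)\to\mathrm K^0(\mathrm{pt})=\mathbb Z$. Applying this to $E=\gamma^j(\widetilde\tau_M)$ gives $\Phi_j(M)\in\mathbb Z$.

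For the degree bound, note that $\gamma^j(\widetilde\tau_M)$ has filtration at least $j$, so $\operatorname{ch}\gamma^j(\widetilde\tau_M)$ begins in cohomological degree $2j$. It therefore vanishes on $[M]$ for $j>m$. The same conclusion follows from the root formula: each factor $t(e^{x_i}-1)$ carries a factor $x_i$. Thus $\Phi_t(M)\in\mathbb Z[t]$ has degree at most $m$.

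\textbf{Main obstacle.} The key input is the integrality of the twisted Todd genus on stably complex, not necessarily almost complex, manifolds. I would cite the Atiyah--Hirzebruch integrality theorem, or equivalently the $\mathrm K$-theory Thom isomorphism for the stable complex normal bundle. Once that is accepted, the remaining steps are formal.
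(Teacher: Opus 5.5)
Your proof is correct and follows the paper's route: the paper likewise identifies $\Phi_j(M)$ as the Hattori--Stong $\mathrm{K}$-theory index with a single $\gamma$-operation, deduces integrality from that, and gets multiplicativity from the characteristic series. The only difference is that you unpack the cited integrality instead of quoting \cite{Hat66,Sto65}, namely $\langle\Td(\tau_M)\operatorname{ch}E,[M]\rangle\in\mathbb Z$ via the $\mathrm{K}$-theory Gysin map or Atiyah--Hirzebruch; this is exactly the direction of Hattori--Stong the paper uses, so you are not avoiding anything the paper relies on.
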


\begin{proof}
The coefficients $\Phi_j(M)$ are the Hattori--Stong $\mathrm{K}$-theory indices
recalled in \eqref{eq:intro-hattori-stong-phi}, with a single
$\gamma$-operation, and are therefore integers \cite{Hat66,Sto65}.
The formula for the characteristic series in \eqref{eq:intro-Phi-roots} gives
multiplicativity and hence the asserted ring homomorphism.
\end{proof}

Let $B_{\Phi,t}(x)$ be obtained by applying $\Phi_t$ coefficientwise to the
universal two-valued logarithm $B(x)$. Thus
\[
 B_{\Phi,t}\bigl(f_{\Phi,t}(z)f_{\Phi,t}(-z)\bigr)=-z^2.
\]

\begin{lemma}[(The specialized logarithm)]\label[lemma]{lem:specialized-B}
One has
\begin{equation}\label{eq:B-Phi}
 B_{\Phi,t}(x)=4
 \arcsin^2\!\left(\frac{\sqrt{x}}{2\sqrt{1+cx}}\right),
\end{equation} where $c=t(1-t)$.
Consequently, for $n\geq1$,
\begin{equation}\label{eq:coefficient-B-Phi}
 [x^{n+1}]B_{\Phi,t}(x)
 =\sum_{k=1}^{n+1}(-1)^{n+1-k}
 \frac{\binom{n}{k-1}}
 {k^2\binom{2k-1}{k-1}}
 c^{n+1-k}.
\end{equation}
\end{lemma}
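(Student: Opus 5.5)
The plan is to compute $x=f_{\Phi,t}(z)f_{\Phi,t}(-z)$ in closed form, invert the relation $B_{\Phi,t}(x)=-z^2$ explicitly, and then expand. Since $\Phi_t$ is a ring homomorphism (\cref{prop:integral-genus}), applying it to \eqref{eq:def-exp-B} gives $B_{\Phi,t}\bigl(f_{\Phi,t}(z)f_{\Phi,t}(-z)\bigr)=-z^2$, as recorded before the lemma. So it suffices to check that the right-hand side of \eqref{eq:B-Phi}, evaluated at $x=f_{\Phi,t}(z)f_{\Phi,t}(-z)$, equals $-z^2$. Both sides are strict series in $x$, and the substitution $x=f_{\Phi,t}(z)f_{\Phi,t}(-z)=-z^2+O(z^4)$ is invertible as a change of variable in $z^2$. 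Hence this identity determines $B_{\Phi,t}$ uniquely.

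\textbf{Step 1: the product $f(z)f(-z)$.} Using \eqref{eq:genus-exponential}, the numerator is
\[
(1-e^{-z})(1-e^{z})=2-e^z-e^{-z}=-4\sinh^2(z/2).
\]
The denominator factors as $(1-t+te^z)(1-t+te^{-z})$. Expanding and using $(1-t)^2+t^2=1-2c$, it equals
\[
1-2c+2c\cosh z=1+4c\sinh^2(z/2).
\]
Writing $s=\sinh^2(z/2)$, this gives
\[
x=\frac{-4s}{1+4cs}.
\]

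\textbf{Step 2: inversion.} Every series involved is even in $z$, so it is a formal series in $z^2$ with coefficients in $\mathbb Q[c]$. I would therefore formally substitute $z^2=-w^2$; this is a ring automorphism of $\mathbb Q[c][[z^2]]$, with no analysis needed. Under it $s=-\sin^2(w/2)$, and
\[
x=\frac{4\sin^2(w/2)}{1-4c\sin^2(w/2)}.
\]
Hence $1+cx=\bigl(1-4c\sin^2(w/2)\bigr)^{-1}$, and so $x/(1+cx)=4\sin^2(w/2)$. Taking the formal square root with leading term $w/2$ gives $\sin(w/2)=\sqrt{x}/(2\sqrt{1+cx})$. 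Consequently
\[
4\arcsin^2\!\left(\frac{\sqrt x}{2\sqrt{1+cx}}\right)=w^2=-z^2.
\]
The left side is a genuine series in $x$, because $\arcsin^2$ is even, so only integral powers of $\sqrt{x}$ survive. This proves \eqref{eq:B-Phi}.

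\textbf{Step 3: the coefficients.} I would use the classical expansion
\[
\arcsin^2 y=\tfrac12\sum_{k\ge1}\frac{(2y)^{2k}}{k^2\binom{2k}{k}},
\]
which follows from the ODE $(1-y^2)h''-yh'=2$ for $h=\arcsin^2 y$, or can simply be quoted. Together with $\binom{2k}{k}=2\binom{2k-1}{k-1}$ it gives
\[
4\arcsin^2(\sqrt Y/2)=\sum_{k\ge1}\frac{Y^k}{k^2\binom{2k-1}{k-1}}.
\]
Substitute $Y=x(1+cx)^{-1}$, so that $Y^k=x^k(1+cx)^{-k}$. Extracting $[x^{n+1}]$ requires $[x^{n+1-k}](1+cx)^{-k}$. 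By the negative binomial theorem this equals
\[
(-c)^{n+1-k}\binom{n}{n+1-k}=(-1)^{n+1-k}\binom{n}{k-1}c^{n+1-k},
\]
and summing over $1\le k\le n+1$ yields \eqref{eq:coefficient-B-Phi}.

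The only point needing care is the justification of Step 2 as a purely formal manipulation, namely the substitution $z^2\mapsto-w^2$ and the choice of square-root branch. I would handle it by phrasing everything as identities in $\mathbb Q[c][[z^2]]$ and fixing the square roots by their leading terms.
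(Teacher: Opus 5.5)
Your proposal is correct and follows essentially the same route as the paper. You derive the same closed form $f_{\Phi,t}(z)f_{\Phi,t}(-z)=-4\sinh^2(z/2)/(1+4c\sinh^2(z/2))$, invert via $\arcsin$ (your formal substitution $z^2\mapsto -w^2$ is exactly the paper's identity $\arcsin(i\sinh w)=iw$, written out more carefully), and extract coefficients using the same $\arcsin^2$ series and negative binomial expansion.
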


\begin{proof}
A direct simplification of
$f_{\Phi,t}(z)f_{\Phi,t}(-z)$ gives
\[
 f_{\Phi,t}(z)f_{\Phi,t}(-z)
 =-\frac{4\sinh^2(z/2)}{1+4c\sinh^2(z/2)}.
\]
Writing \(x=f_{\Phi,t}(z)f_{\Phi,t}(-z)\), we obtain
\[
\frac{x}{4(1+cx)}=-\sinh^2\!\left(\frac{z}{2}\right).
\]
The formal identity \(\arcsin(i\sinh w)=iw\) now gives
\eqref{eq:B-Phi}.

Now use
\[
 (\arcsin z)^2
 =\frac12\sum_{k\geq1}
 \frac{(2z)^{2k}}{k^2\binom{2k}{k}}.
\]
Substitution in \eqref{eq:B-Phi}, together with
\[
 \binom{2k}{k}=2\binom{2k-1}{k-1}
\]
and
\[
 (1+cx)^{-k}
 =\sum_{j\geq0}(-1)^j\binom{k+j-1}{j}c^jx^j,
\]
gives \eqref{eq:coefficient-B-Phi}.
\end{proof}

\subsection{Arithmetic of the specialized coefficients}\label{sec:arithmetic}

To obtain the lower bound, it remains to prove that
$d_n[x^{n+1}]B_{\Phi,t}(x)$ is primitive in $\mathbb Z[t]$. This reduces to
the arithmetic lemma proved in this subsection.

For $m\geq2$ and $1\leq k\leq m$, put
\begin{equation}\label{eq:r-mk}
 r_{m,k}=
 \frac{\binom{m-1}{k-1}}
 {k^2\binom{2k-1}{k-1}}.
\end{equation}

\begin{lemma}[(Arithmetic lemma)]\label[lemma]{lem:arithmetic}
For \(m\ge2\), put
\[
\alpha_{m,k}=d_{m-1}r_{m,k},
\qquad 1\le k\le m.
\]
Then $\alpha_{m,k}\in\mathbb Z$ for all \(k\), and
\[
\gcd(\alpha_{m,1},\ldots,\alpha_{m,m})=1.
\]
Thus the numbers \(r_{m,k}\) have the common denominator \(d_{m-1}\)
with relatively prime numerators.
\end{lemma}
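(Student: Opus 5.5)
The plan is to first rewrite $\alpha_{m,k}$ in a form where integrality is transparent, and then prove the gcd statement one prime at a time by exhibiting, for each prime $p$, an index $k$ with $p\nmid\alpha_{m,k}$.

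\textbf{Rewriting $\alpha_{m,k}$.} Use $\binom{m-1}{k-1}=\frac km\binom mk$ and $k\binom{2k-1}{k-1}=\frac k2\binom{2k}{k}$ to obtain
\[
 r_{m,k}=\frac{2\binom mk}{m\,k\binom{2k}{k}}.
\]
Since $d_{m-1}=\frac m2 L_{2m}$ with $L_{2m}=\lcm(1,\ldots,2m)$, this gives
\[
 \alpha_{m,k}=L_{2m}\binom mk\,\frac{(k-1)!\,k!}{(2k)!}
 =L_{2m}\binom mk\int_0^1 t^{k-1}(1-t)^k\,\dd t .
\]
The integrand is an integral polynomial of degree $2k-1\le 2m-1$. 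Integrating it termwise produces denominators from $\{1,\ldots,2k\}\subset\{1,\ldots,2m\}$, all of which divide $L_{2m}$. Hence $\alpha_{m,k}\in\mathbb Z$.

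\textbf{The gcd, prime by prime.} Fix a prime $p$ and let $p^a$ be the largest power of $p$ with $p^a\le 2m$, so that $\nu_p(L_{2m})=a$. Then
\[
 \nu_p(\alpha_{m,k})=a+\nu_p\tbinom mk-\nu_p(k)-\nu_p\tbinom{2k}{k}.
\]
By Kummer's theorem, $\nu_p\binom{2k}{k}$ is the number of carries in the base-$p$ addition $k+k$. By Lucas' theorem, $\nu_p\binom mk=0$ exactly when every base-$p$ digit of $k$ is at most the corresponding digit of $m$. It therefore suffices to find $k\le m$ whose digits are dominated by those of $m$ and for which $\nu_p(k)+\#\{\text{carries in }k+k\}=a$.

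\emph{Case $p=2$.} From $2^a\le 2m<2^{a+1}$ we get $2^{a-1}\le m<2^a$, so the leading binary digit of $m$ sits in position $a-1$. Take $k=2^{a-1}$. Lucas gives $\binom mk$ odd, and $\nu_2(k)=a-1$. There is exactly one carry in $k+k$. The total is $a$, as required.

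\emph{Case $p$ odd, $m\ge p^a$.} Take $k=p^a$. The condition $m<p^{a+1}/2$ forces the digit of $m$ in position $a$ to be nonzero, so $\binom mk$ is prime to $p$. Here $\nu_p(k)=a$, and doubling $k$ produces no carry since $2<p$. The total is again $a$.

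\emph{Case $p$ odd, $m<p^a\le 2m$.} This is the main obstacle. Now $m$ has exactly $a$ base-$p$ digits, and $2m$ overflows into position $a$. I need $k$ prime to $p$ with digits dominated by those of $m$ such that $k+k$ carries in every position $0,\ldots,a-1$; equivalently, $2k\ge p^a$ with an unbroken carry chain. My first attempt is to build $k$ from the top of $m$. Choose the largest $j$ such that the truncation $m_{\ge j}$ (the digits of $m$ in positions $\ge j$, lower digits zeroed) already satisfies $2m_{\ge j}\ge p^a$ in the leading block. Then set the digits of $k$ in positions $\ge j$ equal to those of $m$, and fill the lower digits so as to force carries where $m$'s digits allow. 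In cases where the carry chain breaks, I would fall back on an explicit choice such as $k=(p^a+1)/2$. For that $k$, using $k+(k-1)=p^a$, one has $k\binom{2k}{k}=(p^a+1)\binom{p^a}{k}$, which has $p$-adic valuation exactly $a$; it then remains to check $p\nmid\binom mk$ via Lucas, or to perturb $k$ within the set of indices whose digits are dominated by those of $m$.

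If a purely digitwise choice proves too delicate, the alternative is to use the polynomial structure of \eqref{eq:coefficient-B-Phi}: show directly that $d_{m-1}[x^m]B_{\Phi,t}$, viewed as a polynomial in $c$, is nonzero modulo $p$, for instance by specializing $c$ to a suitable residue modulo $p$ using the closed form \eqref{eq:B-Phi}.
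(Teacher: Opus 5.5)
Your integrality argument via the Beta integral $\int_0^1 t^{k-1}(1-t)^k\,\dd t$ is correct, and it is a tidier route than the paper's Legendre bound. Your choices $k=2^{a-1}$ for $p=2$ and $k=p^a$ for odd $p$ with $p^a\le m$ coincide with the paper's, and you verify them correctly.

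The gap is the remaining case: $p$ odd with $m<p^a\le 2m$. You leave it open, and the sub-goal you set there cannot always be met. You ask for $k$ prime to $p$ with a full carry chain in $k+k$. Take $p=3$, $m=6=(20)_3$, $a=2$. Every $k\le m$ whose digits are dominated by those of $m$ lies in $\{3,6\}$, so no admissible $k$ is prime to $3$. Your fallback $k=(p^a+1)/2=5$ also fails Lucas, since $\binom65=6$; indeed $\alpha_{6,5}=132$ is divisible by $3$. The only index that works here is $k=6$: $\nu_3(6)=1$, and $6+6=(110)_3$ has a single carry. Your alternative of specializing $c$ modulo $p$ is not developed into an argument.

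The missing idea is to trade low-order carries for trailing zeros of $k$.
\begin{itemize}
\item Put $h=(p-1)/2$ and $q=(p^a+1)/2$. The base-$p$ digits of $q$ are $q_0=h+1$ and $q_i=h$ for $1\le i<a$.
\item Since $p^a$ is odd and $p^a\le 2m$, one has $q\le m<p^a$.
\item Let $\ell$ be the highest position where the digits of $m$ and $q$ differ, with $\ell=0$ if $m=q$. Then $m_\ell\ge h+1$.
\item Define $k$ by the digits $0$ below position $\ell$, $h+1$ at position $\ell$, and $h$ above it.
\end{itemize}
These digits are dominated by those of $m$, so $p\nmid\binom mk$ by Lucas, and $\nu_p(k)=\ell$. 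Doubling $k$ produces no carries below $\ell$ and carries exactly at positions $\ell,\ldots,a-1$, because $2(h+1)=p+1$ and $2h+1=p$. Hence $\nu_p(k)+\nu_p\binom{2k}{k}=\ell+(a-\ell)=a$, and $\nu_p(\alpha_{m,k})=0$.

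This is precisely the construction in the paper's proof. Without it, or an equivalent, your argument does not establish the gcd statement.
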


\begin{proof}
Put \(D=d_{m-1}\), so that
\[
\alpha_{m,k}=Dr_{m,k}.
\]
We shall prove the stronger identity
\begin{equation}\label{eq:max-valuation}
\max_{1\leq k\leq m}\bigl(-\vp(r_{m,k})\bigr)=\vp(D)
\end{equation}
for every prime \(p\).

The identity
\begin{equation}\label{eq:r-simplified}
r_{m,k}
=
\frac{1}{m}
\frac{\binom{m}{k}}
{k\binom{2k-1}{k-1}}
\end{equation}
gives
\begin{equation}\label{eq:valuation-r}
-\vp(r_{m,k})
=
\vp(m)
+
\vp\!\left(k\binom{2k-1}{k-1}\right)
-
\vp\binom{m}{k}.
\end{equation}

First let \(p\) be odd and put
\[
e=\left\lfloor\log_p(2m-1)\right\rfloor.
\]
Because $p$ is odd and $2m$ cannot be a power of $p$,
\begin{equation}\label{eq:odd-D}
\vp(D)=\vp(m)+e.
\end{equation}
By Legendre's formula,
\begin{align}
\vp\!\left(k\binom{2k-1}{k-1}\right)
&=
\vp\!\left(\frac{(2k-1)!}{(k-1)!^2}\right)\notag\\
&=
\sum_{j\geq1}
\left(
\left\lfloor\frac{2k-1}{p^j}\right\rfloor
-
2\left\lfloor\frac{k-1}{p^j}\right\rfloor
\right).
\label{eq:legendre-sum}
\end{align}
If \(k-1=Ap^j+s\), where \(0\leq s<p^j\), then the \(j\)-th
summand equals
\[
\left\lfloor\frac{2s+1}{p^j}\right\rfloor.
\]
It is either \(0\) or \(1\), and it vanishes when \(p^j>2k-1\).
Consequently,
\[
\vp\!\left(k\binom{2k-1}{k-1}\right)\leq e,
\]
and \eqref{eq:valuation-r} gives
\begin{equation}\label{ineq}
-\vp(r_{m,k})\leq\vp(m)+e=\vp(D).
\end{equation}

We now show that equality in \eqref{ineq} is attained. Put \(t=p^e\).
If \(t\leq m\), take \(k=t\). Lucas' theorem gives
\[
\vp\binom{m}{k}=0,
\]
while \(k-1=p^e-1\), so every summand in
\eqref{eq:legendre-sum}, for \(1\leq j\leq e\), equals \(1\).
Hence equality holds in \eqref{ineq}.

Suppose that \(m<t\). Then
\[
\frac{t+1}{2}\leq m<t.
\]
Put \(h=(p-1)/2\) and \(q=(t+1)/2\). In base \(p\), the digits of
\(q\) are
\[
q_0=h+1,
\qquad
q_i=h\quad (1\leq i<e).
\]
Write \(m=\sum_{i=0}^{e-1}m_ip^i\). If \(m=q\), set \(\ell=0\).
Otherwise, let \(\ell\) be the largest index for which \(m_\ell\neq q_\ell\).
In the latter case, $m>q$ and hence $m_\ell>q_\ell$. In the former case,
$m_0=q_0=h+1$. Define \(k\) by the base-\(p\)
digits
\[
k_i=
\begin{cases}
0,&i<\ell,\\
h+1,&i=\ell,\\
h,&i>\ell.
\end{cases}
\]
Every digit of \(k\) is at most the corresponding digit of \(m\), so
Lucas' theorem gives
\[
\vp\binom{m}{k}=0.
\]
The digits of \(k-1\) are equal to \(p-1\) below position \(\ell\) and
to \(h\) from position \(\ell\) onwards. Therefore, for every
\(1\leq j\leq e\),
\[
(k-1)\bmod p^j\geq\frac{p^j-1}{2},
\]
and hence every summand in \eqref{eq:legendre-sum} equals \(1\).
Thus
\[
\vp\!\left(k\binom{2k-1}{k-1}\right)=e,
\]
so equality in \eqref{ineq} is again attained.

It remains to consider \(p=2\). Legendre's formula gives
\begin{equation}\label{eq:binary-weight}
\nu_2\!\left(k\binom{2k-1}{k-1}\right)=s_2(k-1),
\end{equation}
where \(s_2(r)\) denotes the number of ones in the binary expansion of
\(r\).

If \(m=2^e\), then \(s_2(k-1)\leq e\) for every \(k\leq m\), and
\eqref{eq:valuation-r} gives
\[
-\nu_2(r_{m,k})\leq\nu_2(m)+e=\nu_2(D).
\]
For \(k=m\),
\[
s_2(k-1)=e,
\qquad
\binom{m}{k}=1,
\]
so equality is attained.

Suppose that \(m\) is not a power of \(2\), and put
\[
e=\left\lceil\log_2m\right\rceil.
\]
Then \(s_2(k-1)\leq e-1\) for every \(k\leq m\), and therefore
\[
-\nu_2(r_{m,k})
\leq
\nu_2(m)+e-1
=
\nu_2(D).
\]
Taking \(k=2^{e-1}\), Lucas' theorem gives
\[
\nu_2\binom{m}{k}=0,
\qquad
s_2(k-1)=e-1,
\]
so equality is attained in this case as well. This proves
\eqref{eq:max-valuation} for every prime \(p\).

It follows from \eqref{eq:max-valuation} that
\[
\vp(\alpha_{m,k})
=
\vp(D)+\vp(r_{m,k})
\geq0
\]
for every prime \(p\) and every \(k\). Hence
\[
\alpha_{m,k}\in\mathbb Z.
\]

Finally, for each prime \(p\), choose an index \(k\) for which the maximum
in \eqref{eq:max-valuation} is attained. Then
\[
\vp(\alpha_{m,k})
=
\vp(D)+\vp(r_{m,k})
=
0.
\]
Thus no prime divides all the integers \(\alpha_{m,1},\ldots,\alpha_{m,m}\).
Consequently,
\[
\gcd(\alpha_{m,1},\ldots,\alpha_{m,m})=1.
\]
\end{proof}

\begin{corollary}\label[corollary]{cor:primitive-polynomial}
For $n\geq1$ and an indeterminate $c$, the polynomial
\begin{equation}\label{eq:primitive-index-polynomial}
 \mathcal R_n(c):=d_n\sum_{k=1}^{n+1}(-1)^{n+1-k}
 r_{n+1,k}c^{n+1-k}\in\mathbb Z[c]
\end{equation}
is primitive.
\end{corollary}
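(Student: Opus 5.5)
The corollary is a direct reformulation of \cref{lem:arithmetic} with $m=n+1\geq2$. The plan is to read off the coefficients of $\mathcal R_n(c)$ and then apply that lemma.

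First, I will expand \eqref{eq:primitive-index-polynomial}. The coefficient of $c^{n+1-k}$, for $1\leq k\leq n+1$, is $(-1)^{n+1-k}d_nr_{n+1,k}$. Since $d_n=d_{m-1}$, this equals $\pm\alpha_{n+1,k}$ in the notation of \cref{lem:arithmetic}. As $k$ ranges over $1,\ldots,n+1$, the exponents $n+1-k$ are pairwise distinct and run through $0,\ldots,n$. So no two terms combine, and the coefficient list of $\mathcal R_n(c)$ is exactly $(\pm\alpha_{n+1,1},\ldots,\pm\alpha_{n+1,n+1})$.

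Second, I will invoke \cref{lem:arithmetic} with $m=n+1$. It gives $\alpha_{n+1,k}\in\mathbb Z$ for all $k$, so $\mathcal R_n(c)\in\mathbb Z[c]$. It also gives $\gcd(\alpha_{n+1,1},\ldots,\alpha_{n+1,n+1})=1$, and signs do not affect the gcd, so the content of $\mathcal R_n$ is $1$, i.e.\ $\mathcal R_n$ is primitive.

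There is no real obstacle here, since all the arithmetic was done in the lemma. The only point to check is the index bookkeeping: that the exponent map $k\mapsto n+1-k$ is injective, so no coefficients merge or cancel. For later use I would also record that, by \eqref{eq:coefficient-B-Phi}, $\mathcal R_n(c)=d_n[x^{n+1}]B_{\Phi,t}(x)$ with $c=t(1-t)$.
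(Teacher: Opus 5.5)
Your proposal is correct and follows the paper's argument: the paper proves the corollary in one line by observing that the coefficients of $\mathcal R_n(c)$ are, up to sign, the integers $\alpha_{n+1,k}=d_nr_{n+1,k}$ from \cref{lem:arithmetic} with $m=n+1$, so their greatest common divisor is $1$. Your additional check that the exponents $n+1-k$ are distinct, and your identification $\mathcal R_n(c)=d_n[x^{n+1}]B_{\Phi,t}(x)$ via \eqref{eq:coefficient-B-Phi}, are both accurate.
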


\begin{proof}
Its coefficients, up to signs, are the integers $\alpha_{n+1,k}$ of
\cref{lem:arithmetic}, and hence have greatest common divisor $1$.
\end{proof}

To pass from $\mathcal R_n(c)$ to the genus value, we substitute $c=t(1-t)$.
The following observation shows that this preserves primitivity.

\begin{lemma}[(Injectivity of the substitution)]\label[lemma]{lem:substitution}
For every prime $p$, the ring homomorphism
\[
 (\mathbb Z/p)[c]\longrightarrow(\mathbb Z/p)[t],
 \qquad c\longmapsto t(1-t),
\]
is injective.
\end{lemma}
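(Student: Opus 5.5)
The homomorphism is a ring map from a polynomial ring in one variable, so injectivity means that $t(1-t)$ is transcendental over $\mathbb F_p$ inside $(\mathbb Z/p)[t]$. The plan is to compare degrees directly. For a nonzero polynomial $P(c)=\sum_{i=0}^{N}\pi_ic^i$ over $\mathbb Z/p$ with leading coefficient $\pi_N\neq0$, substitution gives
\[
P\bigl(t(1-t)\bigr)=\sum_{i=0}^{N}\pi_i\,t^i(1-t)^i .
\]
Each summand $t^i(1-t)^i$ has degree exactly $2i$ in $t$. Its leading coefficient is $(-1)^i$, which is a unit modulo every prime $p$, including $p=2$.

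The top term $i=N$ therefore contributes $(-1)^N\pi_Nt^{2N}$. All other summands have degree at most $2N-2$, so nothing can cancel the coefficient of $t^{2N}$. Hence $P(t(1-t))$ has degree exactly $2N$ and leading coefficient $\pm\pi_N\neq0$, so it is nonzero. This proves injectivity.

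The step that needs checking is that the leading coefficient of $t(1-t)$ does not vanish modulo $p$. It equals $-1$, so the argument is uniform in $p$ and needs no case split.

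As the lemma is used next: if $\mathcal R_n(c)$ from \cref{cor:primitive-polynomial} is primitive, then its reduction modulo every $p$ is nonzero. The lemma then shows that $\mathcal R_n(t(1-t))\in\mathbb Z[t]$ is nonzero modulo every $p$, hence primitive. By \eqref{eq:coefficient-B-Phi} this polynomial is $d_n\Phi_t(b_n)$.
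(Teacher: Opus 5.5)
Your proof is correct and is the paper's own argument: a nonzero polynomial of degree $N$ with leading coefficient $\pi_N$ is sent to a polynomial of degree $2N$ with leading coefficient $(-1)^N\pi_N$. This leading coefficient is nonzero modulo every prime, including $p=2$.
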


\begin{proof}
A nonzero polynomial of degree $r$ with leading coefficient $\alpha$ maps
to a polynomial of degree $2r$ with leading coefficient $(-1)^r\alpha$,
which is also nonzero when $p=2$.
\end{proof}

\subsection{The lower bound from the index genus}\label{sec:lower}

We now apply $\Phi_t$ to the universal coefficient $C_n$. The primitivity
of the specialized polynomial rules out every proper divisor of $d_n$ and
therefore completes the exact denominator argument.

\begin{corollary}[(Lower bound)]\label[corollary]{cor:lower}
Let
\[
 C_n=d_nb_n\in\OmU
\]
be the class supplied by \cref{cor:upper}.  Then $C_n$ is primitive in
$\OmU$.
\end{corollary}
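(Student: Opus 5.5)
The plan is to detect primitivity of $C_n$ by evaluating the integral index genus $\Phi_t$ on it. Suppose, for contradiction, that $C_n=pY$ for some prime $p$ and some $Y\in\Omega_{\Uu}^{-4n}$. By \cref{prop:integral-genus}, $\Phi_t$ is a ring homomorphism $\OmU\to\mathbb Z[t]$. Hence
\[
\Phi_t(C_n)=p\,\Phi_t(Y)\in p\,\mathbb Z[t],
\]
so every coefficient of $\Phi_t(C_n)$ would be divisible by $p$. The contradiction will come from computing $\Phi_t(C_n)$ explicitly.

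To compute it, apply $\Phi_t$ coefficientwise to $B(x)$. This gives $B_{\Phi,t}(x)$, and hence $\Phi_t(b_n)=[x^{n+1}]B_{\Phi,t}(x)$. By \cref{lem:specialized-B}, formula \eqref{eq:coefficient-B-Phi}, this coefficient equals
\[
\sum_{k=1}^{n+1}(-1)^{n+1-k}\,r_{n+1,k}\,c^{n+1-k},\qquad c=t(1-t),
\]
with $r_{n+1,k}$ as in \eqref{eq:r-mk}. The rational extension of $\Phi_t$ is also multiplicative and additive. Therefore
\[
\Phi_t(C_n)=d_n\Phi_t(b_n)=\mathcal R_n\bigl(t(1-t)\bigr),
\]
where $\mathcal R_n$ is the polynomial of \cref{cor:primitive-polynomial}.

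By \cref{cor:primitive-polynomial}, $\mathcal R_n(c)\in\mathbb Z[c]$ is primitive, so its reduction modulo $p$ is a nonzero element of $(\mathbb Z/p)[c]$. By \cref{lem:substitution}, the substitution $c\mapsto t(1-t)$ is injective modulo $p$. Hence $\mathcal R_n(t(1-t))$ is nonzero modulo $p$, which contradicts $\Phi_t(C_n)\in p\,\mathbb Z[t]$. Since this holds for every prime $p$, the class $C_n$ is primitive.

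The arithmetic work, namely \cref{lem:arithmetic}, is already done. The only point requiring care is the identification $\Phi_t(b_n)=[x^{n+1}]B_{\Phi,t}(x)$. It rests on the fact that $\Phi_t$, extended to $\OmU\otimes\mathbb Q\to\mathbb Q[t]$, commutes with the functional equation \eqref{eq:def-B}. The genus carries the universal $f$ to $f_{\Phi,t}$, so it carries the unique solution $B$ of $B(f(z)f(-z))=-z^2$ to the unique solution $B_{\Phi,t}$. This uniqueness argument is the step I would write out explicitly.
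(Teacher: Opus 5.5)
Your proposal is correct and is essentially the paper's proof. You evaluate the integral genus $\Phi_t$ on $C_n$, identify $\Phi_t(C_n)=\mathcal R_n\bigl(t(1-t)\bigr)$ via \eqref{eq:coefficient-B-Phi}, and combine \cref{cor:primitive-polynomial} with \cref{lem:substitution} to exclude divisibility by any prime. The paper argues with an arbitrary integer $r>1$ rather than a prime, which is equivalent. Your uniqueness justification of $\Phi_t(b_n)=[x^{n+1}]B_{\Phi,t}(x)$ is the step the paper takes implicitly when it applies $\Phi_t$ coefficientwise to $B$ and passes to the functional equation before \cref{lem:specialized-B}.
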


\begin{proof}
Apply the integral index genus $\Phi_t$ of \cref{prop:integral-genus}.
By \eqref{eq:coefficient-B-Phi} and \eqref{eq:primitive-index-polynomial},
\[
 \Phi_t(C_n)=d_n[x^{n+1}]B_{\Phi,t}(x)=\mathcal R_n\bigl(t(1-t)\bigr).
\]
The polynomial $\mathcal R_n(c)$ is primitive by \cref{cor:primitive-polynomial}.
By \cref{lem:substitution}, its reduction modulo any prime remains nonzero
after the substitution $c=t(1-t)$. Thus $\Phi_t(C_n)$ is primitive in
$\mathbb Z[t]$.

If $C_n=rD$ in $\OmU$ for an integer $r>1$, then
\[
 \Phi_t(C_n)=r\Phi_t(D).
\]
Since $\Phi_t(D)\in\mathbb Z[t]$, every coefficient of $\Phi_t(C_n)$ would
be divisible by $r$, a contradiction. Hence $C_n$ is primitive.
\end{proof}

\begin{proof}[Proof of \cref{thm:main}]
The inclusion $C_n=d_nb_n\in\OmU$ follows from \cref{cor:upper}, and the
primitivity of $C_n$ follows from \cref{cor:lower}. If $Nb_n\in\OmU$ for a
positive integer $N$, then
$(N/d_n)C_n$ is integral. Primitivity of $C_n$ in the free abelian group
$\Omega_{\Uu}^{-4n}$ implies $d_n\mid N$. Thus $d_n$ is the exact denominator
of $b_n$.

To prove undecomposability, fix $n\geq1$ and take the characteristic series
\[
 Q(z)=1+z^{2n}.
\]
For its Hirzebruch genus $\varphi_Q$ and a stably complex manifold $M$ of
complex dimension $2n$, one has
\[
 \varphi_Q(M)
 =\left\langle\sum_i x_i^{2n},[M]\right\rangle
 =s_{2n}(M),
\]
where the $x_i$ are the tangent Chern roots. This identity also holds for
rational cobordism classes of the same degree. The exponential is
$f_Q(z)=z/Q(z)$. By \eqref{eq:def-exp-B}, the specialized two-valued
logarithm $B_Q$ satisfies
\[
 B_Q^{-1}(-z^2)=-\frac{z^2}{(1+z^{2n})^2},
 \qquad
 B_Q^{-1}(x)=\frac{x}{(1+(-1)^n x^n)^2}.
\]
Compositional inversion to order $n+1$ gives
\[
 B_Q(x)=x+2(-1)^n x^{n+1}+O(x^{n+2}).
\]
Consequently,
\[
 s_{2n}(C_n)=\varphi_Q(C_n)
 =d_n[x^{n+1}]B_Q(x)=2(-1)^n d_n,
\]
which proves the asserted Milnor number formula.

The power sum is additive under Whitney sums. Hence, by dimensional
reasons, $s_{2n}(M\times N)=0$ whenever $M$ and $N$ have positive complex
dimensions summing to $2n$. Thus $s_{2n}$ vanishes on
$(\mathfrak a^2)^{-4n}$ and on its rationalization. Its nonzero value on
$C_n$ proves undecomposability, even over $\mathbb Q$.

Finally, $\OmU$ is a graded polynomial ring over $\mathbb Z$, so any
factorization of the homogeneous class $C_n$ has homogeneous factors. Two
factors of negative degree would make $C_n$ decomposable. A factor of
degree zero is an integer and must be a unit by primitivity. Therefore
$C_n$ is irreducible.
\end{proof}

\section{Logarithmic numerators in the Stong and coefficient rings}
\label{sec:stong-numerators}

This section proves
\cref{thm:stong-numerators,thm:sp-denominator,thm:Cn-Lambda}. We first recall
the topological origin of the invariant differentiation and generalized shift,
then derive a closed formula for the series $H_q$. We next extract from these
series an odd integer-valued polynomial whose derivative at zero is a
coefficient of the logarithm $B(x)$. The exact arithmetic factor needed to
recover $C_n$ in $\LambdaSt$ is provided by a general lemma on odd
integer-valued polynomials. We then lift the quotient Stong manifold formula
to genuine quaternionic cobordism and prove that $\widetilde b_n$ has exact
denominator $2d_n$ and primitive numerator $\widehat C_n$.
Finally, the restriction of $\Theta_1$ to the diagonal gives the additional
$2$-primary divisibility needed to place $C_n$ in the smaller ring $\Lambda$.

\subsection{Topological origin of the generalized shift}

Let $A^{\U}$ be the algebra of stable cohomology operations in complex
cobordism. For a finite CW complex $X$, Buchstaber introduced the ring
\cite[\S~4, (26)]{Buc78}
\begin{equation}\label{eq:functor-L-definition}
 \mathcal L^{*}(X)
 =\operatorname{Hom}_{A^{\U}}
   \bigl(\U^{*}(\MSp),\U^{*}(X)\bigr)
 \subset\U^{*}(X).
\end{equation}
For locally finite complexes this definition is extended by inverse limits over
the finite skeleta. The coefficient ring of this functor is the Stong ring
\cite[Lemma~6.6]{Buc78},
\begin{equation}\label{eq:L-point-Stong}
 \mathcal L^{*}(\mathrm{pt})=\St,
\end{equation}
and, if the homology of $X$ has no $2$-torsion, the subring
\[
 \mathcal L^{*}(X\times\CP^\infty)
 \subset\U^{*}(X)[[u]]
\]
is closed under the invariant differentiation
\[
 D=\frac{\dd}{\dd g(u)}
   =\frac{1}{g'(u)}\frac{\dd}{\dd u}.
\]
Here $u=c_1^{\U}(\eta)$ is the universal complex cobordism Chern class
\cite[Lemma~4.5]{Buc78}.

The two-valued formal group describes the corresponding extension in the
Buchstaber--Novikov coordinate
$x=u\bar u=\iota^*c_2^{\U}(\zeta_{\mathbb C})$.
Writing $x_i=u_i\bar u_i$ and
$\mathcal L^{4*}=\bigoplus_{r}\mathcal L^{4r}$, Buchstaber proved
\cite[Theorem~5.2(3)]{Buc78}
\begin{equation}\label{eq:L-quadratic-extension}
 \mathcal L^{4*}(\CP^\infty\times\CP^\infty)
 \cong
 \St[[x_1,x_2,Z]]
 \big/
 \bigl(Z^2-\Theta_1(x_1,x_2)Z+\Theta_2(x_1,x_2)\bigr).
\end{equation}
Moreover, the stable map constructed in \cite[Theorem~2.11]{Buc78}
induces $4\Delta$, where $\Delta$ is the averaging operator defined below.
Thus these operators have a topological origin in cobordism and stable
transfer. The normalization used below is fixed by the quotient
Stong manifold expansion.

\subsection{Generalized shifts and the series \texorpdfstring{$H_q$}{Hq}}

Put
\[
 E(x)=B^{-1}(x),
\]
and denote the two values of the universal two-valued formal group by
\begin{equation}\label{eq:zpm-B-coordinate}
 z_{\pm}(x,y)
 =E\!\left(
   \bigl(\sqrt{B(x)}\mathbin{\pm}\sqrt{B(y)}\bigr)^2
  \right).
\end{equation}
Thus
\[
 z_++z_-=\Theta_1(x,y),
 \qquad
 z_+z_-=\Theta_2(x,y).
\]
For a series $\varphi(x)\in(\OmU\otimes\Q)[[x]]$, let
\[
 \Delta\varphi(x,y)
 =\frac12\bigl(\varphi(z_+(x,y))+\varphi(z_-(x,y))\bigr).
\]
The first canonical invariant differential operator is
\[
 \mathcal D_1\varphi(x)
 =\left.
   \frac{\partial}{\partial y}\Delta\varphi(x,y)
  \right|_{y=0}.
\]
For $x=u\bar u$, it is related to the invariant derivative $D$ by
$D^2\varphi(x)=-2\mathcal D_1\varphi(x)$ \cite[(50)]{Buc78}.
For $q\geq1$, let $\Psi_q(x)$ be the unique solution of
\[
 \mathcal D_1\Psi_q(x)=x^q,
 \qquad
 \Psi_q(0)=0,
\]
and put
\begin{equation}\label{eq:def-uq-Hq}
 u_q(x,y)=\Delta\Psi_q(x,y),
 \qquad
 H_q(x,y)=\frac{\partial^2u_q}{\partial x\,\partial y}(x,y).
\end{equation}

The normalization of the formula for $H_q$ is important. Buchstaber's
coefficient formula contains the factor $2$ below. Nadiradze later realized
this divisibility geometrically by a free involution and its quotient.

\begin{proposition}
\label[proposition]{prop:Hq-stong-expansion}
For every $q\geq1$, one has
\begin{equation}\label{eq:Hq-stong-double}
 2H_q(x,y)
 =-\sum_{k,l\geq0}[M(k,l;q)]x^ky^l.
\end{equation}
Moreover, for Nadiradze's invariant representatives,
\begin{equation}\label{eq:Hq-stong-quotient}
 H_q(x,y)
 =-\sum_{k,l\geq0}[M(k,l;q;\Ii)]x^ky^l.
\end{equation}
In both sums, we set the terms with $q>k+l+1$ equal to zero. Consequently,
$H_q(x,y)\in\LambdaSt[[x,y]]$.
\end{proposition}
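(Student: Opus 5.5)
Our plan is to compute $H_q$ explicitly as a two-variable Hirzebruch-type series and to identify each coefficient with a characteristic number of $M(k,l;q)$ through the tangent--normal isomorphism. The quotient formula then follows from Nadiradze's covering relation.

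\textbf{Step 1: a closed formula for $u_q$.} We pass to the coordinate $w=g(u)$, so that $x=-f(w)f(-w)$ and $B(x)=-w^2$. In this coordinate $\sqrt{B(x)}=iw$, and the two values $z_\pm$ correspond to $w_1\pm w_2$. Hence
\[
\Delta\varphi=\tfrac12\bigl(\varphi(w_1+w_2)+\varphi(w_1-w_2)\bigr),
\]
with $\varphi$ regarded as an even function of $w$. The relation $D^2=-2\mathcal D_1$ turns the equation $\mathcal D_1\Psi_q=x^q$ into
\[
\frac{\dd^2}{\dd w^2}\Psi_q=-2\bigl(-f(w)f(-w)\bigr)^q .
\]
Averaging then gives $u_q(x,y)$ as a function of $w_1\pm w_2$. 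Differentiating in $x$ and $y$ via the chain rule, with $\partial x/\partial w=g'(u)^{-1}\,\partial x/\partial u$, we expect
\[
H_q=\frac{-\bigl(\Psi_q''(w_1+w_2)-\Psi_q''(w_1-w_2)\bigr)}{2\,x_w(w_1)\,x_w(w_2)}
\]
up to sign. This is an explicit expression in $f(w_1\pm w_2)f(-(w_1\pm w_2))$ divided by derivative factors.

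\textbf{Step 2: characteristic number of $M(k,l;q)$.} For $\omega=(k,l)$ we apply the universal characteristic series $Q(\alpha)=\alpha/f(\alpha)$, exactly as in the introduction's computation for $M(0^{2n+2};1)$. Using $\tau\oplus\mathbb R^2\cong(k+1)(\xi\oplus\bar\xi)$ on each factor,
\[
[M(k,l;q)]=\Bigl\langle\prod_{\ell=1,2}\bigl(f(\alpha_\ell)f(-\alpha_\ell)\bigr)^{-(n_\ell+1)}\alpha_\ell^{2n_\ell+2}\cdot\bigl(-f(\alpha_1+\alpha_2)f(-\alpha_1-\alpha_2)\bigr)^q ,[\CP^{2k+1}\times\CP^{2l+1}]\Bigr\rangle
\]
up to sign conventions. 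Since the bundle $\lambda_\omega\oplus\bar\lambda_\omega$ is invariant under $\alpha_2\mapsto-\alpha_2$ only after symmetrization, the odd pairing on $\CP^{2l+1}$ picks out the antisymmetric part in $\alpha_2$. This reproduces the difference $\Psi''(w_1+w_2)-\Psi''(w_1-w_2)$.

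\textbf{Step 3: coefficient extraction.} Taking the coefficient of $\alpha_1^{2k+1}\alpha_2^{2l+1}$ is a residue in $w_1,w_2$. We convert it to the coefficient of $x^ky^l$ by the ramified residue formula of \cref{lem:ramified-residue}. Each variable contributes a factor $\tfrac12$, and together with the $\tfrac12$ in $\Delta$ this yields the normalization $2H_q$ in \eqref{eq:Hq-stong-double}. The convention that terms with $q>k+l+1$ vanish matches the fact that $M(k,l;q)$ is empty there, because the Euler class of $\zeta_\omega^{\oplus q}$ exceeds the dimension.

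\textbf{Step 4: quotients.} The identity $[M(\omega;r)]=2[M(\omega;r;\Ii)]$ gives \eqref{eq:Hq-stong-quotient}, since $\OmU$ is torsion free. Membership in $\LambdaSt[[x,y]]$ is then immediate from the definition of $\LambdaSt$.

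\textbf{Main obstacle.} The hardest step is Step 1 together with the bookkeeping of Steps 2--3: solving $\mathcal D_1\Psi_q=x^q$ correctly, and matching the Jacobian factors $x_w$ with the normal bundle factors $(f f)^{-(n+1)}$. We would first verify the case $q=1$, $k=l=0$ by direct expansion to fix signs. Alternatively, we may simply cite Buchstaber's coefficient formula in \cite[\S5]{Buc78} for \eqref{eq:Hq-stong-double}.
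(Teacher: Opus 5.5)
\textbf{Comparison with the paper.} Your Step~4 is exactly the paper's argument for \eqref{eq:Hq-stong-quotient} and for the final inclusion $H_q\in\LambdaSt[[x,y]]$. For \eqref{eq:Hq-stong-double}, however, the paper computes nothing. It cites Buchstaber's coefficient formula \cite[Theorems~5.10--5.13]{Buc78}, which is your fallback. Your main route evaluates the universal genus on $\CP_*^{2k+1}\times\CP_*^{2l+1}$ and matches double residues. This is a genuinely different and workable proof. It gives an independent check of the normalization (the factor $2$ and the sign), which the citation takes on trust, at the price of orientation bookkeeping.

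You can also avoid solving $\mathcal D_1\Psi_q=x^q$. The closed formula of \cref{prop:Hq-closed-form} does not use the Stong expansion. Put $x=f(\alpha_1)f(-\alpha_1)$, $y=f(\alpha_2)f(-\alpha_2)$, $z_\pm=f(\alpha_1\pm\alpha_2)f(-\alpha_1\mp\alpha_2)$, and take $\sqrt{B(x)}=i\alpha_1$, $\sqrt{B(y)}=i\alpha_2$. Then the closed formula gives directly
\[
 H_q\,\frac{\dd x}{\dd\alpha_1}\,\frac{\dd y}{\dd\alpha_2}=-\bigl(z_+^q-z_-^q\bigr).
\]

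\textbf{Sign and normalization errors.} As written, your signs and constants are wrong. They cannot be fixed by checking only $k=l=0$, $q=1$, because the discrepancies depend on $k,l,q$.
\begin{enumerate}
\item You need $x=u\bar u=f(w)f(-w)$, since this is what gives $B(x)=-w^2$, not $-f(w)f(-w)$.
\item The normal factor is $\prod f(\text{roots})=\bigl(f(s)f(-s)\bigr)^q$ with $s=\alpha_1+\alpha_2$, with no minus sign. Your Step~2 formula is therefore off by $(-1)^q$.
\item Since $Q(\alpha)Q(-\alpha)=-\alpha^2/x(\alpha)$, the tangent factor carries $(-1)^{k+l}$, which you dropped. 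It is compensated only because the Stong structure $(k+1)(\xi\oplus\bar\xi)$ orients $\CP^{2k+1}$ by $(-1)^{k+1}$ relative to the complex orientation ($\xi$ and $\bar\xi$ are isomorphic as real bundles with opposite orientations). So you must pair with $[\CP_*^{2k+1}\times\CP_*^{2l+1}]$, not with the complex fundamental class.
\item In Step~3 you omit the factor $-2$ from $D^2=-2\mathcal D_1$ and the factor $\tfrac12$ from antisymmetrizing in $\alpha_2$.
\end{enumerate}

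Done correctly, put
\[
 R=\res_{\alpha_1=0}\res_{\alpha_2=0}x^{-k-1}y^{-l-1}\bigl(z_+^q-z_-^q\bigr)\,\dd\alpha_2\,\dd\alpha_1 .
\]
The ramified residue formula, which applies verbatim to $x(\alpha)=-\alpha^2+\cdots$, gives $[x^ky^l]H_q=-R/4$. The tangent--normal formula gives $[M(k,l;q)]=R/2$. Together these are exactly \eqref{eq:Hq-stong-double}.

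A test that does detect the $q$-dependent signs is $H_q(x,0)=qx^{q-1}$, which follows from $\mathcal D_1\Psi_q=x^q$. It forces the zero-dimensional class $[M(q-1,0;q)]$ to equal $-2q$. Pairing $c_2(\zeta_\omega)^q$ with the complex orientation of $\CP^{2q-1}\times\CP^1$ would instead give $(-1)^q\,2q$.
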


\begin{proof}
Equation \eqref{eq:Hq-stong-double} is Buchstaber's coefficient formula
\cite[Introduction and Theorems~5.10--5.13]{Buc78}.
The differential definition of $H_1$ is given in Theorem~5.6 of the same
paper, and its extension to $H_q$ is stated in the Introduction.
For every admissible triple $(k,l,q)$, Nadiradze's construction
and covering formula give
\[
 [M(k,l;q)]=2[M(k,l;q;\Ii)]
\]
in complex cobordism \cite[Lemma~1 and the subsequent special case, pp.~264--265]{Nad80}. Substitution into
\eqref{eq:Hq-stong-double} and cancellation of $2$, which is valid because
$\OmU$ has no additive torsion, give \eqref{eq:Hq-stong-quotient}.
\end{proof}

We next give an explicit expression for $H_q$ solely in terms of the
logarithm $B$ and its inverse. Besides its use below, this formula makes the
recurrence in $q$ transparent.

\begin{proposition}[(Closed formula for $H_q$)]
\label[proposition]{prop:Hq-closed-form}
For every $q\geq1$,
\begin{equation}\label{eq:Hq-closed-form}
\begin{split}
 H_q(x,y)
 &=\frac{B'(x)B'(y)}{4\sqrt{B(x)B(y)}}
 \Bigl[
  E\!\left(
    \bigl(\sqrt{B(x)}+\sqrt{B(y)}\bigr)^2
   \right)^q
 \\[-0.2em]
 &\hspace{8em}
  -E\!\left(
    \bigl(\sqrt{B(x)}-\sqrt{B(y)}\bigr)^2
   \right)^q
 \Bigr].
\end{split}
\end{equation}
The right-hand side, although written with formal square roots, belongs to
$(\OmU\otimes\Q)[[x,y]]$.
\end{proposition}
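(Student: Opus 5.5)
We work in the coordinate $w=B(x)$, where everything linearizes, and then go back to $x$. Put $s=\sqrt{B(x)}$ and $\sigma=\sqrt{B(y)}$. These are formal square roots, $s=\sqrt{x}\,(1+O(x))$, and similarly for $\sigma$. By \eqref{eq:zpm-B-coordinate} we have $z_\pm=E((s\pm\sigma)^2)$. So if we write $\varphi(E(w^2))=:\tilde\varphi(w)$, which is an even series in $w$, then
\[
 \Delta\varphi(x,y)=\tfrac12\bigl(\tilde\varphi(s+\sigma)+\tilde\varphi(s-\sigma)\bigr).
\]
The plan has four steps.
\begin{enumerate}
\item Express $\mathcal D_1$ in the variable $w$.
\item Solve $\mathcal D_1\Psi_q=x^q$.
\item Compute $\partial_x\partial_y$ of $u_q$.
\item Check that the resulting expression lies in $(\OmU\otimes\Q)[[x,y]]$.
\end{enumerate}

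For the first step, near $y=0$ we have $\sigma^2=B(y)=y+O(y^2)$. Taylor expanding in $\sigma$, the odd terms cancel and
\[
 \Delta\varphi=\tilde\varphi(s)+\tfrac12\sigma^2\tilde\varphi''(s)+O(\sigma^4).
\]
Differentiating in $y$ at $y=0$, and using $\partial_y(\sigma^2)|_{0}=B'(0)=1$, gives $\mathcal D_1\varphi=\tfrac12\tilde\varphi''(s)$. This is consistent with $D^2=-2\mathcal D_1$, since $s^2=-g(u)^2$ means $s=\pm i g(u)$. Hence $\Psi_q$ is determined by
\[
 \tilde\Psi_q''(w)=2E(w^2)^q,\qquad \tilde\Psi_q(0)=0,
\]
with $\tilde\Psi_q$ even. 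The evenness removes the integration constant in $\tilde\Psi_q'$: we get $\tilde\Psi_q'(w)=2\int_0^wE(v^2)^q\,\dd v$, which is odd. I will not need $\tilde\Psi_q$ itself explicitly.

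Next, $u_q=\tfrac12\bigl(\tilde\Psi_q(s+\sigma)+\tilde\Psi_q(s-\sigma)\bigr)$. Apply $\partial_\sigma\partial_s$ to get
\[
 \tfrac12\bigl(\tilde\Psi_q''(s+\sigma)-\tilde\Psi_q''(s-\sigma)\bigr)=E((s+\sigma)^2)^q-E((s-\sigma)^2)^q.
\]
Then use the chain rule $\partial_x=\frac{B'(x)}{2s}\partial_s$ and $\partial_y=\frac{B'(y)}{2\sigma}\partial_\sigma$. This gives exactly \eqref{eq:Hq-closed-form}, with the prefactor $B'(x)B'(y)/(4s\sigma)$.

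The main obstacle is justifying that these manipulations with formal square roots are legitimate and that the output is an honest power series in $x,y$. I would handle it as follows. The bracket $G(s,\sigma)=E((s+\sigma)^2)^q-E((s-\sigma)^2)^q$ is odd in $s$, odd in $\sigma$, and invariant under $(s,\sigma)\mapsto(-s,-\sigma)$. Therefore $G$ is divisible by $s\sigma$ in $(\OmU\otimes\Q)[[s,\sigma]]$, and $G/(s\sigma)$ is even in each variable separately. Consequently $G/(s\sigma)$ is a power series in $s^2=B(x)$ and $\sigma^2=B(y)$, hence lies in $(\OmU\otimes\Q)[[x,y]]$.

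To make the derivation rigorous, I would run the whole computation in the ring $(\OmU\otimes\Q)[[s,\sigma]]$ with $s,\sigma$ treated as independent variables, and only at the end restrict to the subring of series even in each variable. This subring is identified with $(\OmU\otimes\Q)[[x,y]]$ via $x=E(s^2)$ and $y=E(\sigma^2)$. In this setting, steps one and three are identities between formal series, and the chain rule holds because $\dd x=E'(s^2)\,2s\,\dd s$ and $B'(x)E'(s^2)=1$. The one remaining subtle point is uniqueness of $\Psi_q$ in step two: I would check that $\mathcal D_1$ is injective on series vanishing at $0$, since it lowers degree by exactly one with leading coefficient nonzero, so the solution found above is the $\Psi_q$ of \eqref{eq:def-uq-Hq}.
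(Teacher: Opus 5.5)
Your proposal is correct and follows essentially the same route as the paper: pass to the square-root coordinates of $B$, rewrite $\mathcal D_1\Psi_q=x^q$ as an ODE, take $\partial_s\partial_\sigma$ of $u_q$, apply the chain rule $\partial_x=\frac{B'(x)}{2s}\partial_s$, and use that the bracket is odd in each variable, hence divisible by $s\sigma$ with an even quotient. The only differences are that you parametrize by $w=\sqrt T$, so the paper's equation $\psi_q'(T)+2T\psi_q''(T)=E(T)^q$ becomes $\tilde\Psi_q''(w)=2E(w^2)^q$, and that you carry out the formal-square-root bookkeeping in $(\OmU\otimes\Q)[[s,\sigma]]$ more explicitly than the paper does.
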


\begin{proof}
Put
\[
 X=B(x),\qquad Y=B(y),\qquad U=\sqrt X,\qquad V=\sqrt Y,
\]
and set
\[
 \psi_q(T)=\Psi_q(E(T)).
\]
In the $B$-coordinate, the generalized shift is
\begin{equation}\label{eq:uq-B-coordinate}
 u_q(x,y)
 =\frac12\left(
  \psi_q((U+V)^2)+\psi_q((U-V)^2)
 \right).
\end{equation}
For an arbitrary series $\psi(T)$, the coefficient of $Y$ in
\[
 \frac12\left(
  \psi((\sqrt X+\sqrt Y)^2)
  +\psi((\sqrt X-\sqrt Y)^2)
 \right)
\]
is $\psi'(X)+2X\psi''(X)$. Therefore the equation
$\mathcal D_1\Psi_q=x^q$ becomes
\begin{equation}\label{eq:psi-q-differential}
 \psi_q'(T)+2T\psi_q''(T)=E(T)^q.
\end{equation}
Differentiating \eqref{eq:uq-B-coordinate} first with respect to $U$ and then
with respect to $V$, and using \eqref{eq:psi-q-differential}, gives
\[
 \frac{\partial^2u_q}{\partial U\,\partial V}
 =E((U+V)^2)^q-E((U-V)^2)^q.
\]
Since
\[
 \frac{\partial}{\partial x}
 =\frac{B'(x)}{2U}\frac{\partial}{\partial U},
 \qquad
 \frac{\partial}{\partial y}
 =\frac{B'(y)}{2V}\frac{\partial}{\partial V},
\]
formula \eqref{eq:Hq-closed-form} follows. The difference in square brackets
is divisible by $UV$, and the quotient is invariant under the independent
sign changes $U\mapsto-U$ and $V\mapsto-V$. Hence it is an ordinary series in
$U^2=B(x)$ and $V^2=B(y)$, and therefore in $x$ and $y$.
\end{proof}

\begin{corollary}[{\cite[Theorems~5.12--5.13]{Buc78}}]\label[corollary]{cor:Hq-recurrence-generators}
The series $H_q$ satisfy
\begin{equation}\label{eq:Hq-recurrence}
 H_{q+1}
 =\Theta_1H_q-\Theta_2H_{q-1},
 \qquad q\geq1,
\end{equation}
where $H_0=0$ and $H_1(0,0)=1$. In particular,
\begin{equation}\label{eq:theta-via-H}
 \Theta_1=\frac{H_2}{H_1},
 \qquad
 \Theta_2=\Theta_1^2-\frac{H_3}{H_1}.
\end{equation}
If
\[
 R_H=\mathbb Z[\text{coefficients of }H_1,H_2,H_3],
\]
then
\begin{equation}\label{eq:RH-LambdaSt}
 R_H=\LambdaSt.
\end{equation}
\end{corollary}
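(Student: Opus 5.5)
The statement to prove is \cref{cor:Hq-recurrence-generators}: the recurrence \eqref{eq:Hq-recurrence}, the formulas \eqref{eq:theta-via-H}, and the equality $R_H=\LambdaSt$.

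\emph{The recurrence.} I would derive it from the closed formula of \cref{prop:Hq-closed-form}. Put $W(x,y)=B'(x)B'(y)/(4\sqrt{B(x)B(y)})$. Then that formula reads
\[
 H_q=W\,(z_+^q-z_-^q),
\]
with $z_\pm$ as in \eqref{eq:zpm-B-coordinate}. Since $z_\pm$ are the two roots of $Z^2-\Theta_1Z+\Theta_2$, the power differences satisfy
\[
 z_+^{q+1}-z_-^{q+1}=\Theta_1(z_+^q-z_-^q)-\Theta_2(z_+^{q-1}-z_-^{q-1}).
\]
Multiplying by $W$ gives \eqref{eq:Hq-recurrence} for $q\geq1$, with the convention $H_0=W(z_+^0-z_-^0)=0$.

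\emph{The normalization $H_1(0,0)=1$.} Write $U=\sqrt{B(x)}$ and $V=\sqrt{B(y)}$. Then $E((U\pm V)^2)=(U\pm V)^2+O(4)$, so $z_+-z_-=4UV+\dots$. Since $B'(0)=1$, we have $W=1/(4UV)+\dots$, and hence $H_1=1+O(x,y)$. The same fact also follows from $[M(0,0;1;\Ii)]$ being the point class in \cref{prop:Hq-stong-expansion}.

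\emph{The formulas \eqref{eq:theta-via-H}.} Taking $q=1$ gives $H_2=\Theta_1H_1$. Taking $q=2$ gives $H_3=\Theta_1H_2-\Theta_2H_1=(\Theta_1^2-\Theta_2)H_1$. Because $H_1$ has constant term $1$, it is invertible in $R_H[[x,y]]$, so $\Theta_1=H_2/H_1$ and $\Theta_2=\Theta_1^2-H_3/H_1$ hold as integral formal power series identities.

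\emph{The equality $R_H=\LambdaSt$.} The formulas just obtained show that all coefficients of $\Theta_1$ and $\Theta_2$ lie in $R_H$, so $\Lambda\subset R_H$. The recurrence then shows by induction that every $H_q$ has coefficients in $R_H$. By \eqref{eq:Hq-stong-quotient}, these coefficients are exactly $\pm[M(k,l;q;\Ii)]$ for all $k,l\geq0$ and all $q$.

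For the inclusion $\LambdaSt\subset R_H$, recall that $\LambdaSt$ is generated by $[M(k,l;q;\Ii)]$ with $q=1,2,3$ (Buchstaber's Theorem~5.13 combined with Nadiradze's formula, as recalled in the introduction). All of these are coefficients of $H_1,H_2,H_3$.

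For the reverse inclusion $R_H\subset\LambdaSt$, \cref{prop:Hq-stong-expansion} gives $H_q\in\LambdaSt[[x,y]]$ directly.

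\emph{Main obstacle.} The only substantive input is the cited generator statement, that $\LambdaSt$ is generated by the two-term tuples with $q\leq3$. It is taken from \cite{Buc78}, not reproved here. Everything else is the elementary Newton-type identity for the power differences $z_+^q-z_-^q$, together with the invertibility of $H_1$.
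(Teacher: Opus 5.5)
Your proof is correct and follows the paper's argument. You write $H_q=W\,(z_+^q-z_-^q)$ via the closed formula and apply the power-difference recurrence for the roots of $Z^2-\Theta_1Z+\Theta_2$; the $\Theta$ formulas are the cases $q=1,2$; and $R_H=\LambdaSt$ follows from the quotient Stong expansion of $H_q$ plus Buchstaber's generator theorem and Nadiradze's identification, with your checks of $H_1(0,0)=1$ and of the invertibility of $H_1$ only making explicit what the paper leaves implicit. One small correction to your alternative justification of the normalization: since $H_1(0,0)=-[M(0,0;1;\Ii)]$, it requires $[M(0,0;1;\Ii)]=-1$, a negatively oriented point, consistent with $[M(0,0;1)]=\langle -c_1(\lambda_\omega)^2,[\CP^1\times\CP^1]\rangle=-2$, rather than the positive point class.
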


\begin{proof}
The difference $z_+^q-z_-^q$ satisfies the standard recurrence determined by
$z_++z_-=\Theta_1$ and $z_+z_-=\Theta_2$. Formula
\eqref{eq:Hq-closed-form} therefore gives \eqref{eq:Hq-recurrence}, and
\eqref{eq:theta-via-H} follows from the cases $q=1,2$. By
\eqref{eq:Hq-stong-quotient}, the coefficients of $H_1,H_2,H_3$ are the
negatives of the classes $[M(k,l;q;\Ii)]$, $q=1,2,3$. These classes generate
$\LambdaSt$ by \cite[Theorem~5.13]{Buc78}, together with
Nadiradze's identification of the halves with quotient classes
\cite[pp.~264--265]{Nad80}. This proves
\eqref{eq:RH-LambdaSt}.
\end{proof}

\subsection{An odd integer-valued polynomial}

Fix $n\geq1$. For positive integers $q$, define
\begin{equation}\label{eq:def-Pnq}
 P_n(q)=[x^{q-1}y^n]H_q(x,y).
\end{equation}
By Proposition~\ref{prop:Hq-stong-expansion},
\begin{equation}\label{eq:Pnq-stong-class}
 P_n(q)=-[M(q-1,n;q;\Ii)]\in\LambdaSt.
\end{equation}
The real dimension of $M(q-1,n;q;\Ii)$ is $4n$, independently of $q$.
The next proposition is the link between these geometric classes and the
coefficient $b_n$.

\begin{proposition}\label[proposition]{prop:Pn-interpolation}
There is a unique odd polynomial
\[
 P_n(T)\in(\LambdaSt\otimes\Q)[T]
\]
of degree at most $2n+1$ whose value at every positive integer $q$ is given by
\eqref{eq:def-Pnq}. It satisfies
\begin{equation}\label{eq:Pn-derivative}
 P_n'(0)=(n+1)b_n.
\end{equation}
\end{proposition}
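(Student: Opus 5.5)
The plan is to compute $P_n(q)$ directly from the closed formula of \cref{prop:Hq-closed-form}, turning the coefficient extraction $[x^{q-1}]$ into a formal residue in the ramified coordinate $U=\sqrt{B(x)}$, in the spirit of \cref{lem:ramified-residue}. Write $X=B(x)=U^2$, $V=\sqrt{B(y)}$. Then $[x^{q-1}]H_q=\res_{x=0}H_q\,x^{-q}\,\dd x$. Substituting $x=E(U^2)$, $B'(x)\,\dd x=2U\,\dd U$, and inserting the factor $\tfrac12$ from the ramification ($x$ has order $2$ in $U$) should give
\[
 P_n(q)=[y^n]\,\frac{B'(y)}{4V}\,\res_{U=0}\Bigl(R_+(U,V)^q-R_-(U,V)^q\Bigr)\dd U,
 \qquad
 R_\pm=\frac{E\bigl((U\pm V)^2\bigr)}{E(U^2)}.
\]
Here I regard $R_\pm$ as power series in $V$ whose coefficients are Laurent series in $U$. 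Since $E(T)=T\,(1+O(T))$, we have $R_\pm=(1\pm V/U)^2\cdot(\text{unit power series in }U,V)$, so $L_\pm:=\log R_\pm$ is a well-defined power series in $V$ without constant term, with Laurent coefficients in $U$.

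Expanding $R_\pm^q=\exp(qL_\pm)=\sum_j q^jL_\pm^j/j!$, the coefficient of $V^m$ is a polynomial in $q$ of degree at most $m$, because $L_\pm=O(V)$. The factor $B'(y)/(4V)$, together with the requirement that the result be a series in $V^2=B(y)$, shows that only $V^m$ with $m$ odd and $m\leq2n+1$ contribute to $[y^n]$. This produces a polynomial $P_n(T)$ of degree at most $2n+1$ that agrees with \eqref{eq:def-Pnq} at every positive integer, and uniqueness is clear. For oddness in $T$, I would use $L_-(U,V)=L_+(-U,V)$: the substitution $U\mapsto-U$ reverses the sign of the residue, which yields $\res R_-^q=-\res R_+^q$, so $P_n(q)=2\cdot[y^n]\tfrac{B'(y)}{4V}\res R_+^q\,\dd U$. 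I would then show that the $V^{m}$-coefficient of $\res R_+^{-q}$ equals $-$ that of $\res R_+^{q}$. The idea is the formal shift $W=U+V$, which turns $R_+^{-q}$ into $\bigl(E((W-V)^2)/E(W^2)\bigr)^q=R_-(W,V)^q$; the shift is legitimate coefficientwise in $V$, since residues of Laurent series are invariant under $U\mapsto U+V$ when expanded as power series in $V$. Together with $\res R_-^q=-\res R_+^q$, this gives $\res R_+^{-q}=-\res R_+^q$, i.e.\ oddness. This shift argument is the step I expect to need the most care, because one must check that expanding in $V$ before taking residues is compatible with the substitution. If it becomes awkward, I would instead argue with the polynomial identity in $q$ termwise from $\exp(qL)$.

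Finally, $P_n'(0)$ is the coefficient of $q^1$, namely
\[
 P_n'(0)=[y^n]\,\frac{B'(y)}{4V}\res_{U}\bigl(L_+-L_-\bigr)\dd U .
\]
Write $\log E(T)=\log T+\ell(T)$ with $\ell$ a power series. The terms coming from $\ell$ and from $\log E(U^2)$ are power series in $U$ (coefficientwise in $V$) or cancel in the difference, so they have zero residue. The remaining part is $2\log(1+V/U)-2\log(1-V/U)$, whose $U^{-1}$-coefficient is $4V$. Hence $P_n'(0)=[y^n]B'(y)=(n+1)b_n$, as claimed in \eqref{eq:Pn-derivative}. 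The one point I would double-check is the factor $\tfrac12$ from ramification: it is exactly what makes the constant come out as $(n+1)b_n$ rather than $2(n+1)b_n$.
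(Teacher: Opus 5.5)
Your argument is correct and is essentially the paper's proof. You use the ramified residue in $x$ to produce $R_\pm$, the $V$-adic expansion of $R_\pm^T$ for polynomiality and the degree bound, residue invariance under $U\mapsto -U$ and $U\mapsto U+V$ for oddness (the paper composes these into $u\mapsto -u-v$), and the residue $4V$ of $\log(R_+/R_-)$; keeping $[y^n]$ as a coefficient extraction instead of a second residue even lets you skip the final Lagrange inversion. The one omission is that the coefficients lie in $\LambdaSt\otimes\Q$ and not merely in $\OmU\otimes\Q$, which follows by interpolating the values $P_n(q)=-[M(q-1,n;q;\Ii)]\in\LambdaSt$ at finitely many positive integers $q$.
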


\begin{proof}
Apply the ramified residue formula, Lemma~\ref{lem:ramified-residue}, first in $x$
and then in $y$ to the changes of variables
\[
 x=E(u^2),\qquad y=E(v^2).
\]
Using \eqref{eq:Hq-closed-form}, we obtain
\begin{equation}\label{eq:Pn-residue}
 P_n(q)
 =\frac14\res_{v=0}\res_{u=0}
 \left(R_+(u,v)^q-R_-(u,v)^q\right)
 E(v^2)^{-n-1}\,\dd u\,\dd v,
\end{equation}
where
\begin{equation}\label{eq:Rpm}
 R_\pm(u,v)
 =\frac{E((u\pm v)^2)}{E(u^2)}.
\end{equation}
The inner residue is taken first, with all expressions expanded $v$-adically.
Since $R_\pm(u,v)=1+O(v)$, we may replace the positive integer $q$ in
\eqref{eq:Pn-residue} by an indeterminate $T$ and use
\[
 R_\pm(u,v)^T
 =\sum_{j\geq0}\binom{T}{j}(R_\pm(u,v)-1)^j.
\]
Furthermore,
\[
 E(v^2)^{-n-1}
 =v^{-2n-2}\left(\frac{v^2}{E(v^2)}\right)^{n+1}.
\]
Thus only terms of $v$-degree at most $2n+1$ in $R_\pm^T$ contribute to the
outer residue. The coefficient of $v^j$ has degree at most $j$ in $T$, so the
result is a polynomial of degree at most $2n+1$.

The $v$-adically continuous changes of variables
\[
 u\longmapsto-u-v
 \qquad\text{and}\qquad
 u\longmapsto-u+v
\]
send $R_+$ and $R_-$ to their inverses, respectively, and reverse the sign of
$\dd u$. Formal invariance of residues therefore gives
\[
 P_n(-T)=-P_n(T).
\]
Hence $P_n$ is odd. Since its values at $1,2,\ldots,n+1$ lie in $\LambdaSt$,
ordinary interpolation also shows that its coefficients lie in
$\LambdaSt\otimes\Q$.

It remains to compute the derivative at zero. Differentiating
\eqref{eq:Pn-residue} gives
\begin{equation}\label{eq:Pn-derivative-residue}
 P_n'(0)
 =\frac14\res_{v=0}\res_{u=0}
 \log\!\left(
  \frac{E((u+v)^2)}{E((u-v)^2)}
 \right)
 E(v^2)^{-n-1}\,\dd u\,\dd v.
\end{equation}
Write $E(w)=w\varepsilon(w)$ with $\varepsilon(w)\in1+w(\OmU\otimes\Q)[[w]]$.
Then
\[
 \log\!\left(
  \frac{E((u+v)^2)}{E((u-v)^2)}
 \right)
 =2\log\!\left(\frac{u+v}{u-v}\right)
  +\log\!\left(
    \frac{\varepsilon((u+v)^2)}{\varepsilon((u-v)^2)}
   \right).
\]
The second summand has no $u^{-1}$ term. In the $v$-adic expansion,
\[
 2\log\!\left(\frac{u+v}{u-v}\right)
 =4\sum_{j\geq0}\frac1{2j+1}\left(\frac vu\right)^{2j+1},
\]
so its $u$-residue is $4v$. Therefore
\begin{equation}\label{eq:Pn-derivative-coefficient}
 P_n'(0)
 =\res_{v=0}vE(v^2)^{-n-1}\,\dd v
 =[w^n]\left(\frac{w}{E(w)}\right)^{n+1}.
\end{equation}
Lagrange inversion for $B=E^{-1}$ gives
\[
 [x^{n+1}]B(x)
 =\frac1{n+1}[w^n]\left(\frac{w}{E(w)}\right)^{n+1}.
\]
Since $[x^{n+1}]B(x)=b_n$, this proves \eqref{eq:Pn-derivative}.
\end{proof}

\subsection{Arithmetic of odd integer-valued polynomials}
\label{sec:odd-polynomial-arithmetic}

For $N\geq1$, put
\[
 L_N=\lcm(1,2,\ldots,N),
 \qquad
 e_n=\frac{L_{2n+2}}2.
\]
Thus $d_n=(n+1)e_n$.

\begin{lemma}[(Odd integer-valued polynomials)]
\label[lemma]{lem:odd-integer-valued}
Let $R$ be a torsion-free commutative ring, and let
\[
 P(T)\in(R\otimes\Q)[T]
\]
be an odd polynomial of degree at most $2n+1$ such that $P(m)\in R$ for every
$m\in\mathbb Z$. Then
\begin{equation}\label{eq:odd-int-derivative}
 e_nP'(0)\in R.
\end{equation}
The factor $e_n$ is optimal: already for $R=\mathbb Z$, if a positive integer
$e$ satisfies $eP'(0)\in\mathbb Z$ for every such polynomial $P$, then
$e_n\mid e$. More precisely, if $m=n+1$, then
\begin{equation}\label{eq:central-difference}
 e_nP'(0)
 =\sum_{q=1}^{m}A_{n,q}P(q),
\end{equation}
where the integers $A_{n,q}$ are given by \eqref{eq:Anq-main}.
\end{lemma}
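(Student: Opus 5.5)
The plan is to prove the explicit formula \eqref{eq:central-difference} first, as an identity valid for every odd polynomial of degree at most $2n+1$ with coefficients in any $\Q$-algebra. The other claims then follow. Integrality of the $A_{n,q}$ together with $P(q)\in R$ gives \eqref{eq:odd-int-derivative}. Optimality will come from one well-chosen test polynomial.

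\textbf{Step 1: the central-difference formula.} Put $m=n+1$. An odd polynomial $P$ of degree $\le 2m-1$ is determined by its values $P(1),\dots,P(m)$, since $P(0)=0$ and $P(-q)=-P(q)$. Together these give $2m+1$ nodes $-m,\dots,m$. Hence $P'(0)$ is the Lagrange-interpolation derivative at $0$ on those nodes. Differentiating the Lagrange basis at $0$ gives the classical weights
\[
 \ell_q'(0)=\frac{(-1)^{q+1}}{q}\frac{(m!)^2}{(m-q)!\,(m+q)!}\quad(q\ne0),
\]
and these satisfy $\ell_{-q}'(0)=-\ell_q'(0)$. By oddness,
\[
 P'(0)=\sum_{q=1}^m 2\ell_q'(0)P(q)=\sum_{q=1}^m\frac{2(-1)^{q+1}}{q}\frac{\binom{2m}{m-q}}{\binom{2m}{m}}P(q).
\]
Multiplying by $e_n$ gives exactly $A_{n,q}$ as in \eqref{eq:Anq-main}. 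This is routine; I would verify it via the standard formula $\prod_{j\ne q}(0-j)/(q-j)$ over $j\in\{-m,\dots,m\}\setminus\{0,q\}$.

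\textbf{Step 2: integrality of $A_{n,q}$.} This is the main obstacle. We write
\[
 A_{n,q}=(-1)^{q+1}\,\frac{L_{2m}}{q}\cdot\frac{\binom{2m}{m-q}}{\binom{2m}{m}}
\]
and work prime by prime with Kummer's theorem. Then $\nu_p\binom{2m}{m}-\nu_p\binom{2m}{m-q}$ is the difference of carry counts when adding $m+m$ and $(m-q)+(m+q)$ in base $p$. The carries in $m+m$ occur at positions $j$ with $m\bmod p^j\ge p^j/2$, and there are at most $\lfloor\log_p 2m\rfloor=\nu_p(L_{2m})$ of them. So we must show
\[
 \#\mathrm{carries}(m+m)-\#\mathrm{carries}(m-q,m+q)\le \nu_p(L_{2m})-\nu_p(q).
\]
I would try the following. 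At each position $j\le\nu_p(q)$ we have $m-q\equiv m+q\equiv m\pmod{p^j}$. So a carry out of the low $j$ digits in $m+m$ forces one in the other sum as well. At most $\lfloor\log_p 2m\rfloor-\nu_p(q)$ of the remaining positions can contribute to the deficit. Making this rigorous, via the criterion that a carry at level $j$ occurs iff $(a\bmod p^j)+(b\bmod p^j)\ge p^j$, is the delicate part. An alternative route is to express $e_nP'(0)$ through the integer basis $\binom{T+k}{2k+1}$ of odd integer-valued polynomials, which may be cleaner. In that basis the derivative at $0$ of $\binom{T+k}{2k+1}$ is $\pm\frac{(k!)^2}{(2k+1)!}\cdot\frac{1}{1}$, up to a simple factor. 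Then $e_nP'(0)\in R$ reduces to $\frac{L_{2n+2}}{2}\cdot\frac{(k!)^2}{(2k+1)!}\cdot(\text{sign})\cdot\ldots$, and the $A_{n,q}$ are then integral automatically because $P\mapsto P(q)$ has a unimodular inverse on that basis. I would pursue this basis approach first.

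\textbf{Step 3: optimality.} Take $R=\mathbb Z$ and the integral basis element $P_k(T)=\binom{T+k}{2k+1}=\frac{T\prod_{i=1}^k(T^2-i^2)}{(2k+1)!}$ for $k\le n$. Its derivative at $0$ is $\pm (k!)^2/(2k+1)! = \pm 1/\bigl((2k+1)\binom{2k}{k}\bigr)$. For each prime power $p^a\le 2n+2$, I would choose $k$ so that this denominator has $p$-adic valuation $\nu_p(e_n)$. For odd $p$, taking $k$ with $2k+1\ge p^a$ suitably, Legendre-type digit conditions as in \cref{lem:arithmetic} give the required valuation. For $p=2$, $\nu_2\bigl((2k+1)\binom{2k}{k}\bigr)=s_2(k)$, and with $k=2^{a-1}-1\le n$ this equals $a-1=\nu_2(e_n)$. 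Hence $e\,P_k'(0)\in\mathbb Z$ for all $k$ forces $e_n\mid e$.
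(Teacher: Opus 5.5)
Your proposal is correct, and for the integrality half it takes a different route from the paper.

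\textbf{Where you agree with the paper.} Steps 1 and 3 essentially match it. The paper interpolates the even polynomial $P(T)/T$ at $1^2,\dots,m^2$ rather than $P$ at $-m,\dots,m$. Its optimality test polynomials are your $\binom{T+k}{2k+1}$.

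\textbf{Where you differ.} The paper never estimates the weights $A_{n,q}$ directly. It proves $\lambda_r=(2r+1)!/(r!)^2\mid e_n$ for $0\le r\le n$ by Legendre's formula:
\begin{itemize}
\item for odd $p$, each summand $\lfloor(2s+1)/p^a\rfloor$ is $0$ or $1$ and vanishes once $p^a>2r+1$;
\item for $p=2$, $\nu_2(\lambda_r)=s_2(r)\le\lfloor\log_2(n+1)\rfloor$.
\end{itemize}
It then gets $e_nP'(0)\in R$ from the basis expansion, and $A_{n,q}\in\mathbb Z$ for free from unimodularity of the evaluation map. Both bounds come from the single identity $e_n=\lcm(\lambda_0,\dots,\lambda_n)$.

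The basis route you say you would pursue first stops exactly before this divisibility (your ``reduces to \dots''). As written, it omits the actual arithmetic content.

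\textbf{Your Kummer route is already complete.} Use the criterion that level $j$ carries iff $(a\bmod p^j)+(b\bmod p^j)\ge p^j$.
\begin{itemize}
\item The carry indicators of $m+m$ and $(m-q)+(m+q)$ agree for $j\le\nu_p(q)$.
\item A carry of $m+m$ at level $j$ forces $p^j\le 2m$.
\item Hence at most $\lfloor\log_p(2m)\rfloor-\nu_p(q)$ carries of $m+m$ are unmatched.
\end{itemize}
That is precisely $\nu_p(A_{n,q})\ge 0$, since $\nu_p(L_{2m})=\lfloor\log_p(2m)\rfloor$. Nothing delicate remains.

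This route proves integrality of the explicit weights in \eqref{eq:Cn-stong-combination-main} directly. It never needs the upper bound $\lambda_r\mid e_n$, which follows a posteriori by applying the formula to $P=\binom{T+r}{2r+1}$. The paper's route instead makes the integrality of $A_{n,q}$ structural.

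\textbf{A fix in Step 3.} Make the odd-prime choice explicit. Take $a=\lfloor\log_p(2n+2)\rfloor$ and $2k+1=p^a$. Since $p^a$ is odd and at most $2n+2$, one has $k\le n$. Then $\nu_p(\lambda_k)\ge\nu_p(2k+1)=a=\nu_p(e_n)$, which is all optimality requires; no digit analysis is needed at odd $p$.
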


\begin{proof}
For $0\leq r\leq n$, put
\begin{equation}\label{eq:phi-r-int-valued}
 \phi_r(T)
 =\binom{T+r}{2r+1}
 =\frac{T\prod_{j=1}^{r}(T^2-j^2)}{(2r+1)!}.
\end{equation}
These are odd integer-valued polynomials. Moreover,
\[
 \phi_r(r+1)=1,
 \qquad
 \phi_s(r+1)=0\quad\text{for }s>r.
\]
Consequently, the polynomials $\phi_0,\ldots,\phi_n$ form an $R$-basis of the
$R$-module of odd $R$-valued polynomials of degree at most $2n+1$, and the
evaluation map
\[
 P\longmapsto(P(1),P(2),\ldots,P(n+1))
\]
is an isomorphism onto $R^{n+1}$.

Differentiating \eqref{eq:phi-r-int-valued} at zero gives
\begin{equation}\label{eq:phi-r-derivative}
 \phi_r'(0)
 =(-1)^r\frac{(r!)^2}{(2r+1)!}.
\end{equation}
Set
\[
 \lambda_r=\frac{(2r+1)!}{(r!)^2}.
\]
We claim that $\lambda_r$ divides $e_n$ for every $0\leq r\leq n$.

Let $p$ be an odd prime. Legendre's formula gives
\[
 \nu_p(\lambda_r)
 =\sum_{a\geq1}
 \left(
  \left\lfloor\frac{2r+1}{p^a}\right\rfloor
  -2\left\lfloor\frac r{p^a}\right\rfloor
 \right).
\]
If $r=Ap^a+s$ with $0\leq s<p^a$, the summand is
$\lfloor(2s+1)/p^a\rfloor$, and hence is either $0$ or $1$. Therefore
\[
 \nu_p(\lambda_r)
 \leq\left\lfloor\log_p(2r+1)\right\rfloor
 \leq\left\lfloor\log_p(2n+2)\right\rfloor
 =\nu_p(e_n).
\]
For $p=2$, Legendre's formula yields
\[
 \nu_2(\lambda_r)=s_2(r),
\]
where $s_2(r)$ is the number of ones in the binary expansion of $r$. If
$s_2(r)=k$, then $r\geq2^k-1$, and therefore
\[
 s_2(r)
 \leq\left\lfloor\log_2(r+1)\right\rfloor
 \leq\left\lfloor\log_2(n+1)\right\rfloor
 =\nu_2(e_n).
\]
This proves $\lambda_r\mid e_n$.

We now show that this common multiple is the least possible one. Put
\[
 E_n=\lcm\{\lambda_0,\lambda_1,\ldots,\lambda_n\}.
\]
The divisibility just proved gives $E_n\mid e_n$. Conversely, let $p$ be an
odd prime and put $\alpha=\lfloor\log_p(2n+2)\rfloor$ and
$r=(p^\alpha-1)/2$. Then $r\leq n$, and every summand in Legendre's formula
for $\nu_p(\lambda_r)$ with $1\leq a\leq\alpha$ equals one. All later
summands vanish. Consequently,
\[
 \nu_p(\lambda_r)=\alpha=\nu_p(e_n).
\]
For $p=2$, put $\alpha=\lfloor\log_2(n+1)\rfloor$ and $r=2^\alpha-1$.
Then $r\leq n$ and
\[
 \nu_2(\lambda_r)=s_2(r)=\alpha=\nu_2(e_n).
\]
Thus every prime power dividing $e_n$ occurs in one of the numbers
$\lambda_r$, and therefore $E_n=e_n$. Since
$\phi_r'(0)=(-1)^r/\lambda_r$, any positive integer that makes $P'(0)$
integral for every odd integer-valued polynomial $P$ of degree at most
$2n+1$ must be divisible by all $\lambda_r$, and hence by $e_n$. This proves
the optimality assertion.

Writing $P=\sum_{r=0}^{n}a_r\phi_r$ with $a_r\in R$ and using
\eqref{eq:phi-r-derivative}, we obtain
\[
 e_nP'(0)
 =\sum_{r=0}^{n}
  (-1)^r\frac{e_n}{\lambda_r}a_r\in R.
\]
This proves \eqref{eq:odd-int-derivative}.

It remains to identify the coefficients in terms of the values $P(q)$. Since
$P(T)/T$ is an even polynomial, Lagrange interpolation at
$1^2,2^2,\ldots,m^2$ gives
\[
 P'(0)
 =\sum_{q=1}^{m}
  (-1)^{q+1}\frac{2}{q}
  \frac{\binom{2m}{m-q}}{\binom{2m}{m}}P(q).
\]
Multiplication by $e_n$ gives \eqref{eq:central-difference}. Taking $R=\mathbb Z$,
the preceding basis argument shows that the resulting linear functional on $\mathbb Z^{n+1}$ has
integral coefficients. Hence all $A_{n,q}$ are integers.
\end{proof}

\begin{proof}[Proof of \cref{thm:stong-numerators}]
Apply Lemma~\ref{lem:odd-integer-valued} to the polynomial $P_n(T)$ from
Proposition~\ref{prop:Pn-interpolation}, with $R=\LambdaSt$. By
\eqref{eq:Pn-derivative},
\[
 e_nP_n'(0)
 =e_n(n+1)b_n
 =d_nb_n
 =C_n.
\]
Thus $C_n\in\LambdaSt$. Using \eqref{eq:central-difference} and
\eqref{eq:Pnq-stong-class}, we obtain
\[
 C_n
 =\sum_{q=1}^{n+1}A_{n,q}P_n(q)
 =-\sum_{q=1}^{n+1}A_{n,q}[M(q-1,n;q;\Ii)],
\]
which is exactly \eqref{eq:Cn-stong-combination-main}.
\end{proof}

\subsection{The quaternionic lift and its primitive exact numerator}
\label{sec:sp-lift}

\begin{proof}[Proof of \cref{prop:sp-covering-numerator}]
For each $q$, Nadiradze's double covering \cite{Nad80}
\[
 \pi\colon M(q-1,n;q)\longrightarrow M(q-1,n;q;\Ii)
\]
is compatible with the stable complex structures obtained by forgetting the
stable $\Sp$-structure upstairs and the self-conjugate structure downstairs.
Hence
\begin{equation}\label{eq:Sp-cover-forgetful}
 j([M(q-1,n;q)]_{\Sp})
 =2[M(q-1,n;q;\Ii)].
\end{equation}
Indeed, naturality of the Chern classes and $\deg\pi=2$ give, for every Chern
monomial $c_I$ of top degree,
\[
 \begin{aligned}
 &\left\langle c_I(M(q-1,n;q)),[M(q-1,n;q)]\right\rangle\\
 &\qquad=\left\langle\pi^*c_I(M(q-1,n;q;\Ii)),
                     [M(q-1,n;q)]\right\rangle\\
 &\qquad=2\left\langle c_I(M(q-1,n;q;\Ii)),
                      [M(q-1,n;q;\Ii)]\right\rangle.
 \end{aligned}
\]
Since Chern numbers determine complex cobordism classes, this proves
\eqref{eq:Sp-cover-forgetful}.
Together with \eqref{eq:def-Cn-hat} and
\eqref{eq:Cn-stong-combination-main}, this yields
$j(\widehat C_n)=2C_n$. Injectivity of $j_{\Q}$ then gives
$\widehat C_n=2d_n\widetilde b_n$. Hence
$\widetilde{d_n}\mid2d_n$. Conversely, primitivity of $C_n$ implies that
$(N/d_n)C_n\in\OmU$ only if $d_n\mid N$. Applying $j$ therefore gives
$d_n\mid\widetilde{d_n}$, proving \eqref{eq:dn-Sp-dichotomy}.
\end{proof}

\begin{proof}[Proof of \cref{thm:sp-denominator}]
By \eqref{eq:dn-Sp-dichotomy}, it suffices to show that
$C_n\notin j(\OmSpfree^{-4n})$.
Suppose that $C_n=j([X]_{\Sp})$ for a closed stably quaternionic
manifold $X^{4n}$. For $\pi\colon X\to\mathrm{pt}$ and $0\leq r\leq n$, put
\[
 Y_r:=\pi_!^{\Sp}\bigl(p_r^{\Sp}(\tau_X)\bigr)
 \pmod{\Tors}\in\OmSpfree^{-4(n-r)},
\]
where $\pi_!^{\Sp}$ is the integral Gysin homomorphism in quaternionic
cobordism. The Buchstaber--Novikov symplectic Pontryagin classes are evaluated
on the stable quaternionic
tangent bundle, so this definition is unchanged by adding trivial summands.

Write $\operatorname{ch}_{\U}$ for the Chern--Dold character and
$\mathrm{id}\otimes\Td$ for application of the Todd genus to its cobordism
coefficients. For a complex line bundle $L$ with $x=c_1(L)$, the character
and genus formulas \cite[(3), (45)]{BV24} give
$\operatorname{ch}_{\U}(c_1^{\U}(L))=f(x)$ and
$\Td(f(x))=1-e^{-x}$. The quaternionic splitting principle and Whitney
formula \cite[\S1]{PW18}, with our normalization
$\rho(p_1^{\Sp}(L\oplus\bar L))=c_2^{\U}(L\oplus\bar L)$, consequently give
\[
 (\mathrm{id}\otimes\Td)\operatorname{ch}_{\U}
 \bigl(\rho(p_r^{\Sp}(\tau_X))\bigr)
 =[c^r]\prod_i\bigl(1+c(2-e^{x_i}-e^{-x_i})\bigr),
\]
where $\pm x_i$ are the formal Chern roots of the stable tangent bundle.
Compatibility of $\rho$ with Gysin homomorphisms and the
Riemann--Roch--Grothendieck--Hirzebruch formula in complex cobordism
\cite[Theorem~2.4, formula~(29)]{BV24} now yield
\[
 \begin{aligned}
 \Td(j(Y_r))
 &=\left\langle
   \Td(\tau_X)(\mathrm{id}\otimes\Td)\operatorname{ch}_{\U}
   \bigl(\rho(p_r^{\Sp}(\tau_X))\bigr),[X]
   \right\rangle\\
 &=(-1)^r[c^r]\mathcal R_n(c).
 \end{aligned}
\]
The last equality follows from \eqref{eq:intro-Phi-roots} and
$\Phi_t(C_n)=\mathcal R_n(t(1-t))$. Since a stably quaternionic manifold is
spin with $c_1=0$, its Todd genus equals its ordinary $\widehat A$-genus.
Thus \cite[Corollary~2(ii), p.~279]{AH59} implies that
$[c^r]\mathcal R_n(c)$ is even whenever $n-r$ is odd.

Choose
\[
 q=2^{\lfloor\log_2(n+1)\rfloor},
 \qquad r=n+1-q.
\]
The $p=2$ calculation in the proof of \cref{lem:arithmetic} gives
$\nu_2(d_nr_{n+1,q})=0$. Hence
\eqref{eq:primitive-index-polynomial} shows that
$[c^r]\mathcal R_n(c)=(-1)^r d_nr_{n+1,q}$ is odd. But
$n-r=q-1$ is odd, a contradiction. Therefore
$C_n\notin j(\OmSpfree^{-4n})$, and the dichotomy forces
$\widetilde{d_n}=2d_n$.

Finally, if $\widehat C_n=aY$ with an integer $a>1$, then
$2C_n=a\,j(Y)$. Primitivity of $C_n$ forces $a=2$, and hence
$j(Y)=C_n$, contradicting the result just proved. Thus
$\widetilde C_n=\widehat C_n$ is primitive.
\end{proof}

\begin{remark}[(The first quaternionic denominator)]
\label[remark]{rem:first-Sp-denominator}
The second case occurs already for $n=1$. By
\eqref{eq:C1-C2-Enriques}, $C_1$ is represented by an Enriques surface $S$,
whose signature is
\[
 \sigma(C_1)
 =\frac{c_1^2[S]-2c_2[S]}{3}
 =-8.
\]
Every stably quaternionic four-manifold is spin, and its signature is divisible
by $16$ \cite{Rok52,Och81}. Hence $C_1\notin j(\OmSpfree^{-4})$. Since
$d_1=12$, we obtain
\[
 \widetilde{d_1}=24=2d_1,
\]
agreeing with \cref{thm:sp-denominator}.
\end{remark}

\subsection{\texorpdfstring{$\Theta_1$}{Theta1} on the diagonal and the coefficient ring \texorpdfstring{$\Lambda$}{Lambda}}
\label{sec:coefficient-ring}

Put
\[
 \Lambda_{(2)}=\Lambda\otimes\Ztwo.
\]
For a torsion-free commutative $\Ztwo$-algebra $R$ and an integer $a$, we write $2^aR$
for the corresponding fractional $R$-submodule of $R[1/2]$.

\begin{proposition}[($\Theta_1$ on the diagonal)]
\label[proposition]{prop:diagonal-doubling}
One has
\begin{equation}\label{eq:diagonal-doubling}
 \Theta_1(x,x)=E(4B(x)),
 \qquad
 B\bigl(\Theta_1(x,x)\bigr)=4B(x).
\end{equation}
In particular, $\Theta_1(x,x)=4x+O(x^2)$. Moreover,
\begin{equation}\label{eq:diagonal-parity}
 [x^k]\Theta_1(x,x)\in2\Lambda
 \qquad\text{for every odd }k>1.
\end{equation}
\end{proposition}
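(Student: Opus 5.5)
The diagonal identity will follow from the $B$-coordinate description \eqref{eq:zpm-B-coordinate} of the two values. At $y=x$ we have $\sqrt{B(y)}=\sqrt{B(x)}$, so
\[
 z_+(x,x)=E\bigl((2\sqrt{B(x)})^2\bigr)=E(4B(x)),
 \qquad
 z_-(x,x)=E(0)=0 .
\]
Hence $\Theta_1(x,x)=z_++z_-=E(4B(x))$. Applying $B=E^{-1}$ gives $B(\Theta_1(x,x))=4B(x)$. Since $E(h)=h+O(h^2)$ and $B(x)=x+O(x^2)$, the leading term is $4x$.

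To justify the formal square roots, I will argue as in the proof of \cref{prop:Hq-closed-form}. Any symmetric function of $z_\pm$ is a series in $U^2=B(x)$ and $V^2=B(y)$, and setting $V=U$ is a legitimate specialization of the series $\Theta_1(x,y)\in\Lambda[[x,y]]$. As an independent check, $z_-(x,x)=0$ agrees with the topological picture: with $v=u$ we get $z_-=F(u,\bar u)F(\bar u,u)=0$.

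For the parity claim \eqref{eq:diagonal-parity}, I will use only the structure of $\Theta_1$ as a symmetric series with coefficients in $\Lambda$. Write $\Theta_1(x,y)=\sum_{i,j}\theta_{ij}x^iy^j$ with $\theta_{ij}=\theta_{ji}\in\Lambda$. Then
\[
 [x^k]\Theta_1(x,x)=\sum_{i+j=k}\theta_{ij}.
\]
For odd $k$, no index pair with $i+j=k$ has $i=j$. The terms therefore pair off as $\theta_{ij}+\theta_{ji}=2\theta_{ij}$ with $i<j$, and the sum equals $2\sum_{i<j,\,i+j=k}\theta_{ij}\in2\Lambda$.

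The only remaining point is to confirm that the unit relation does not disturb this pairing. It does not: $\Theta_1(x,0)=z_+(x,0)+z_-(x,0)=2x$, so $\theta_{i0}=2\delta_{i1}$. These boundary terms are themselves symmetric pairs, and for $k>1$ odd the pair $(k,0),(0,k)$ simply contributes $0$. The restriction $k>1$ is needed because $[x^1]\Theta_1(x,x)=\theta_{10}+\theta_{01}=4$; this value also lies in $2\Lambda$, so the restriction is harmless.

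I expect no serious obstacle. The one delicate point is the legitimacy of the diagonal substitution in the presence of the formal square roots. That is handled by noting that $\Theta_1=z_++z_-$ is invariant under $V\mapsto-V$, so it is a genuine power series in $B(x)$ and $B(y)$, and the substitution $y=x$ commutes with this expansion.
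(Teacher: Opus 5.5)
Your proof is correct and follows the paper's argument. The diagonal identity comes from evaluating the two branches at $y=x$, where $z_-(x,x)=E(0)=0$. The parity claim comes from the unit relation $\Theta_1(x,0)=2x$ together with pairing $\theta_{ij}$ with $\theta_{ji}$ when $k$ is odd. Your justification of the diagonal substitution through the invariance of $\Theta_1$ under $U\mapsto -U$ and $V\mapsto -V$ is a useful clarification that the paper leaves implicit.
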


\begin{proof}
On the diagonal $y=x$, the two branches in \eqref{eq:zpm-B-coordinate} are
\[
 z_+(x,x)=E(4B(x)),
 \qquad
 z_-(x,x)=E(0)=0.
\]
Since $\Theta_1=z_++z_-$, this proves \eqref{eq:diagonal-doubling}.

The strong unit identity gives $\Theta_1(x,0)=2x$, and symmetry gives
$\Theta_1(x,y)=\Theta_1(y,x)$. Hence we may write
\[
 \Theta_1(x,y)
 =2x+2y+\sum_{i,j\geq1}\theta_{ij}x^iy^j,
 \qquad
 \theta_{ij}=\theta_{ji}\in\Lambda.
\]
For $k>1$,
\[
 [x^k]\Theta_1(x,x)=\sum_{i+j=k}\theta_{ij}.
\]
If $k$ is odd, no term with $i=j$ occurs, and the remaining terms are paired
by $(i,j)\leftrightarrow(j,i)$. Thus $[x^k]\Theta_1(x,x)\in2\Lambda$.
\end{proof}

For $N\geq1$, put
\begin{equation}\label{eq:def-kappa-two}
 \kappa_2(N)=\nu_2(N)+\left\lfloor\log_2N\right\rfloor.
\end{equation}

\begin{lemma}[(A dyadic estimate for powers)]
\label[lemma]{lem:dyadic-powers}
Let $R$ be a torsion-free commutative $\Ztwo$-algebra and let
\[
 F(x)=4x+\sum_{k\geq2}f_kx^k\in R[[x]]
\]
satisfy $f_k\in2R$ for every odd $k>1$. Then, for $1\leq M\leq N$,
\begin{equation}\label{eq:dyadic-power-estimate}
 [x^N]F(x)^M
 \in
 2^{\max\{0,\,2+\kappa_2(M)-\kappa_2(N)\}}R.
\end{equation}
\end{lemma}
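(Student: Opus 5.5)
The plan is to expand the power directly and group terms by cyclic symmetry. Write $f_1=4$. Then
\[
 [x^N]F(x)^M=\sum_{(k_1,\ldots,k_M)}f_{k_1}\cdots f_{k_M},
\]
where the sum runs over ordered compositions $k_1+\cdots+k_M=N$ with all $k_i\geq1$.

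The cyclic group $\mathbb Z/M$ acts on these compositions by rotation, and the summand is constant on orbits. An orbit whose elements have minimal period $d\mid M$ has $m:=M/d$ elements. Each element is the $d$-fold concatenation of a composition $\kappa=(k_1,\ldots,k_m)$ of $n:=N/d$ into $m$ parts. So the orbit contributes
\[
 m\,(f_{k_1}\cdots f_{k_m})^{d}.
\]
This product is an element of $R$ of valuation at least $\nu_2(m)+d\,w(\kappa)$. Here $w(\kappa)$ is the guaranteed $2$-adic weight of the block: $2$ for each part equal to $1$, $1$ for each odd part $>1$, and $0$ for each even part. It therefore suffices to prove, for every divisor $d$ of $\gcd(M,N)$ and every composition $\kappa$ of $n=N/d$ into $m=M/d$ parts,
\[
 \nu_2(m)+d\,w(\kappa)\;\geq\;2+\kappa_2(M)-\kappa_2(N).
\]
We may assume the right-hand side is positive, since otherwise there is nothing to show.

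Next I will record the two elementary constraints on a block. Let $a,b,c$ be the numbers of parts equal to $1$, odd parts $>1$, and even parts, so that $a+b+c=m$ and $a+3b+2c\leq n$. These give $a\geq 2m-n+b$, hence $w\geq 2a+b\geq 2(2m-n)$. Parity gives $a+b\equiv n\pmod2$, so $w\geq1$ whenever $n$ is odd. Also $\nu_2(M)=\nu_2(m)+\nu_2(d)$ and $\nu_2(N)=\nu_2(n)+\nu_2(d)$, so the $\nu_2(d)$ terms cancel. The target thus becomes
\[
 d\,w(\kappa)\;\geq\;2+\nu_2(n)-\lfloor\log_2 N\rfloor+\lfloor\log_2 M\rfloor,
\]
with only $n$ and the floor logarithms left to control. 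I will split into cases. If $d=1$ (aperiodic orbits) and $n=N$ is odd, the bound follows from $w\geq\max(1,2(2M-N))$, using $\lfloor\log_2M\rfloor-\lfloor\log_2N\rfloor\leq 0$ and comparing $2M$ with $N$. If $N$ is even, I will use $w\geq 2(2M-N)$ together with the fact that the right-hand side is positive only when $M$ is close to $N$ in dyadic size. For $d\geq2$, the factor $d$ multiplying $w$ absorbs the deficit, because $\lfloor\log_2M\rfloor-\lfloor\log_2N\rfloor$ equals $\lfloor\log_2 m\rfloor-\lfloor\log_2 n\rfloor$ up to $\pm1$.

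I expect this last combinatorial inequality to be the main obstacle. The issue is that the $\nu_2(n)$ term can be large while $w$ is small, for instance when the block consists of a few large even parts. The first thing I will try is induction on $\nu_2(N)$ in that regime. If all parts of a block are even, then the contribution factors through $F_{\mathrm{even}}(x)$, and the estimate reduces to the same statement for $N/2$, $M$ with a substitution $x^2\mapsto x$. If I cannot close the general inequality cleanly, I will fall back to proving \eqref{eq:dyadic-power-estimate} by induction on $M$ through $F^{M}=F^{M_1}F^{M_2}$ with $M_1=2^{\lfloor\log_2M\rfloor}$. In that route, the power-of-two case $F^{2^s}\equiv F(x)^{2^{s-1}}\cdot F(x)^{2^{s-1}}$ would be handled by the square-pairing (cyclic orbits of size $2$) argument above.
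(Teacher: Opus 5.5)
There is a genuine gap: a sign error in the reduction step. Your setup matches the paper's: the cyclic-orbit grouping, the block weight $w(\kappa)$, and the constraints $w\geq 2(2m-n)$ and $w\geq 1$ for odd $n$. But the reduction turns the goal into a false inequality. Since
\[
 \kappa_2(M)-\kappa_2(N)=\nu_2(m)-\nu_2(n)+\lfloor\log_2M\rfloor-\lfloor\log_2N\rfloor,
\]
subtracting $\nu_2(m)$ from both sides gives
\[
 d\,w(\kappa)\geq 2-\nu_2(n)-\Delta,\qquad \Delta=\lfloor\log_2N\rfloor-\lfloor\log_2M\rfloor\geq0.
\]
The correct sign is $-\nu_2(n)$, not $+\nu_2(n)$.

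Your version already fails for $M=2$, $N=6$ and the orbit $\{(2,4),(4,2)\}$. Here $d=1$, $m=2$, $n=6$ and $w=0$. The original target $2+\kappa_2(2)-\kappa_2(6)=1$ is positive and is met by $\nu_2(m)=1$; indeed the orbit contributes $2f_2f_4\in2R$. Yet your reduced inequality demands $0\geq2$.

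The same failure occurs with $d\geq2$, for example $M=4$, $N=12$ with the block $(2,4)$ repeated twice. So the factor $d$ cannot "absorb the deficit" when $w=0$. The main obstacle you identify, large $\nu_2(n)$ with small $w$ (all-even blocks), is an artifact of the sign: with the correct sign a large $\nu_2(n)$ only helps. Your fallback inductions on $\nu_2(N)$ or on $M$ are therefore unnecessary, and as sketched they are not a proof.

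With the sign corrected, your own two constraints close the argument. This is exactly the paper's proof, which phrases it with the orbit cardinality and the block sum $S$, and uses $\nu_2(S)\geq1$ in the all-even case.
\begin{itemize}
\item If $\Delta\geq2$, the right-hand side is nonpositive and there is nothing to prove.
\item If $\Delta=1$ and $n$ is even, the right-hand side is at most $0$. If $\Delta=1$ and $n$ is odd, the block has an odd part, so $w\geq1$.
\item If $\Delta=0$, then $N<2^{\lfloor\log_2M\rfloor+1}\leq 2M$, so $n<2m$. Hence the block contains a part equal to $1$, and $w\geq2$.
\end{itemize}
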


\begin{proof}
Expand the coefficient as a sum over ordered compositions
\[
 k_1+\cdots+k_M=N,
 \qquad k_i\geq1,
\]
and group the summands into orbits under cyclic rotation. Fix one orbit and
let $d$ be its cardinality. The corresponding $M$-tuple is obtained by
repeating a primitive block of length $d$ exactly $\rho=M/d$ times. If $S$ is
the sum of the entries in the primitive block, then
\[
 M=\rho d,
 \qquad
 N=\rho S,
\]
and therefore
\begin{equation}\label{eq:orbit-vtwo}
 \nu_2(d)=\nu_2(M)-\nu_2(N)+\nu_2(S).
\end{equation}

Let $\ell$ be the number of entries equal to $1$ and $\ell_{\mathrm{odd}}$ the
number of odd entries greater than $1$, both counted in the full $M$-tuple.
Since $f_1=4$ and
$f_k\in2R$ for odd $k>1$, the product attached to the tuple belongs to
$2^{2\ell+\ell_{\mathrm{odd}}}R$. The sum over its cyclic orbit therefore belongs to
\[
 2^{\nu_2(d)+2\ell+\ell_{\mathrm{odd}}}R.
\]
Put
\[
 \Delta
 =\left\lfloor\log_2N\right\rfloor
  -\left\lfloor\log_2M\right\rfloor\geq0.
\]
Using \eqref{eq:orbit-vtwo}, the difference between the last exponent and
$2+\kappa_2(M)-\kappa_2(N)$ is
\begin{equation}\label{eq:orbit-difference}
 \nu_2(S)+2\ell+\ell_{\mathrm{odd}}-2+\Delta.
\end{equation}
If $\Delta\geq2$, this is nonnegative. If $\Delta=1$, then either at least one entry is
odd, so $2\ell+\ell_{\mathrm{odd}}\geq1$, or all entries are even, so $S$ is even and
$\nu_2(S)\geq1$. Finally, if $\Delta=0$, then $N<2M$. Hence some entry equals
$1$, and $\ell\geq1$. Thus \eqref{eq:orbit-difference} is always nonnegative.
Every orbit has the divisibility asserted in
\eqref{eq:dyadic-power-estimate}, and so does their sum.
\end{proof}

\begin{lemma}[(Dyadic linearization)]
\label[lemma]{lem:dyadic-linearization}
Let $R$ and $F$ satisfy the assumptions of \cref{lem:dyadic-powers}. Suppose
that
\[
 B_0(x)=\sum_{N\geq1}h_Nx^N
 \in R[1/2][[x]],
 \qquad
 h_1=1,
\]
satisfies
\begin{equation}\label{eq:abstract-schroeder}
 B_0(F(x))=4B_0(x).
\end{equation}
Then
\begin{equation}\label{eq:dyadic-linearization-conclusion}
 2^{\kappa_2(N)}h_N\in R
 \qquad\text{for every }N\geq1.
\end{equation}
\end{lemma}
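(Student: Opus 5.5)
We prove \eqref{eq:dyadic-linearization-conclusion} by strong induction on $N$, extracting $h_N$ from the coefficient of $x^N$ in the Schröder equation \eqref{eq:abstract-schroeder}. The case $N=1$ is the normalization $h_1=1$, since $\kappa_2(1)=0$.

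For $N\geq2$, compare coefficients of $x^N$ on both sides:
\[
 \sum_{M=1}^{N}h_M\,[x^N]F(x)^M=4h_N .
\]
The term $M=N$ contributes $h_N4^N$, because $[x^N]F^N=4^N$. Solving for $h_N$ gives
\[
 (4-4^N)h_N=\sum_{M=1}^{N-1}h_M\,[x^N]F(x)^M .
\]
Since $4-4^N=4(1-4^{N-1})$ and $1-4^{N-1}$ is odd, hence a unit in $\Ztwo$ and so in $R$, this becomes
\[
 h_N=\frac{1}{4(1-4^{N-1})}\sum_{M=1}^{N-1}h_M\,[x^N]F(x)^M .
\]

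Now bound each summand $2$-adically. By induction, $h_M\in2^{-\kappa_2(M)}R$. By \cref{lem:dyadic-powers}, $[x^N]F^M\in2^{\max\{0,\,2+\kappa_2(M)-\kappa_2(N)\}}R$. The product therefore lies in
\[
 2^{-\kappa_2(M)+\max\{0,\,2+\kappa_2(M)-\kappa_2(N)\}}R\subset 2^{2-\kappa_2(N)}R,
\]
using $\max\{0,a\}\geq a$. Dividing by $4$ times an odd unit leaves $h_N\in2^{-\kappa_2(N)}R$, which is \eqref{eq:dyadic-linearization-conclusion}.

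All fractional submodules here live inside $R[1/2]$, and $R$ is torsion-free, so these manipulations are legitimate. The two facts that make the inductive step close are that $4-4^N$ has $2$-adic valuation exactly $2$, and that the exponent $2$ appearing in \cref{lem:dyadic-powers} matches it. The genuine difficulty sits in \cref{lem:dyadic-powers}, which we are allowed to assume, so the induction itself should go through without obstruction.
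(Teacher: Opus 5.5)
Your proposal is correct and matches the paper's proof: strong induction on $N$, comparing coefficients of $x^N$ in the Schröder equation to get $(4^N-4)h_N$ on one side, bounding each summand $h_M[x^N]F(x)^M$ in $2^{2-\kappa_2(N)}R$ by the induction hypothesis together with \cref{lem:dyadic-powers}, and dividing by $4$ times the $2$-adic unit $4^{N-1}-1$.
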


\begin{proof}
We argue by induction on $N$. The assertion is clear for $N=1$. For $N\geq2$,
comparison of the coefficient of $x^N$ in \eqref{eq:abstract-schroeder} gives
\begin{equation}\label{eq:linearization-recurrence}
 (4^N-4)h_N
 =-\sum_{M=1}^{N-1}h_M[x^N]F(x)^M.
\end{equation}
By the induction hypothesis and \cref{lem:dyadic-powers}, the $M$th summand on
the right belongs to
\[
 2^{-\kappa_2(M)
 +\max\{0,\,2+\kappa_2(M)-\kappa_2(N)\}}R
 \subset 2^{2-\kappa_2(N)}R.
\]
Hence the right-hand side of \eqref{eq:linearization-recurrence} belongs to
$2^{2-\kappa_2(N)}R$. Since
\[
 4^N-4=4(4^{N-1}-1)
\]
and $4^{N-1}-1$ is a unit in $\Ztwo$, division by $4^N-4$ gives
$h_N\in2^{-\kappa_2(N)}R$. This is
\eqref{eq:dyadic-linearization-conclusion}.
\end{proof}

\begin{proposition}[(Inclusion in the coefficient ring at $2$)]
\label[proposition]{prop:Cn-Lambda-two-local}
For every $n\geq1$,
\begin{equation}\label{eq:two-local-bn}
 2^{\nu_2(d_n)}b_n\in\Lambda_{(2)}.
\end{equation}
Consequently,
\[
 C_n=d_nb_n\in\Lambda_{(2)}.
\]
\end{proposition}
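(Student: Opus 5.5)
The plan is to apply \cref{lem:dyadic-linearization} with $R=\Lambda_{(2)}$, $F(x)=\Theta_1(x,x)$ and $B_0=B$. First, $R$ qualifies: $\Lambda\subset\OmU$ is torsion-free, hence so is $\Lambda\otimes\Ztwo$, and it is a $\Ztwo$-algebra. By \cref{prop:diagonal-doubling}, $F(x)=\Theta_1(x,x)$ has coefficients in $\Lambda$, has the form $4x+\sum_{k\ge2}f_kx^k$, and satisfies $f_k\in2\Lambda\subset2R$ for odd $k>1$ by \eqref{eq:diagonal-parity}. The same proposition gives the Schröder equation $B(F(x))=4B(x)$, which is \eqref{eq:abstract-schroeder}.

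The point I expect to need the most care is that $B$ must have coefficients in $R[1/2]=\Lambda\otimes\Ztwo[1/2]=\Lambda\otimes\Q$. This comes from the chain $C_n\in\LambdaSt\subset\St\subset\widetilde\Lambda\subset\Lambda[1/2]$ (\cref{thm:stong-numerators} and \eqref{eq:ring-hierarchy-main}), so $b_n=C_n/d_n\in\Lambda\otimes\Q$. Alternatively, one can bypass this: the recurrence \eqref{eq:linearization-recurrence} determines the $h_N$ uniquely over $R\otimes\Q$, since $4^N-4\neq0$, so $B$ is the unique solution of the Schröder equation with $h_1=1$. That solution has coefficients in $\Lambda\otimes\Q$, and it must coincide with $B$ because $B$ also solves the equation.

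Lemma~\ref{lem:dyadic-linearization} then gives $2^{\kappa_2(N)}h_N\in\Lambda_{(2)}$ with $h_{n+1}=b_n$, that is,
\[
2^{\kappa_2(n+1)}b_n\in\Lambda_{(2)},\qquad
\kappa_2(n+1)=\nu_2(n+1)+\lfloor\log_2(n+1)\rfloor .
\]
It remains to check that $\kappa_2(n+1)=\nu_2(d_n)$. Since $\nu_2(\lcm(1,\dots,2n+2))=\lfloor\log_2(2n+2)\rfloor=1+\lfloor\log_2(n+1)\rfloor$, we get $\nu_2(d_n)=\nu_2(n+1)-1+1+\lfloor\log_2(n+1)\rfloor$, which is \eqref{eq:two-local-bn}.

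Finally, $d_n=2^{\nu_2(d_n)}\delta_n$ with $\delta_n$ odd, and $\delta_n$ is a unit in $\Ztwo$. Hence $C_n=\delta_n\cdot 2^{\nu_2(d_n)}b_n\in\Lambda_{(2)}$.
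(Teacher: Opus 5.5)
Your proof is correct and follows the paper's argument essentially verbatim: the paper also applies \cref{lem:dyadic-linearization} with $R=\Lambda_{(2)}$, $F(x)=\Theta_1(x,x)$, $B_0=B$, verifies the hypotheses via \cref{prop:diagonal-doubling}, and checks $\nu_2(d_n)=\kappa_2(n+1)$. Your alternative remark is also valid: since $4^N-4\neq0$, the normalized solution of the Schröder recurrence is unique, which gives $b_n\in\Lambda\otimes\Q$ directly. This removes the paper's reliance on \cref{thm:stong-numerators} and the ring hierarchy for that step.
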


\begin{proof}
By \cref{thm:stong-numerators,eq:ring-hierarchy-main}, the coefficients of
$B(x)$ belong to $\Lambda\otimes\Q=\Lambda_{(2)}[1/2]$. Apply
\cref{lem:dyadic-linearization} with
\[
 R=\Lambda_{(2)},
 \qquad
 F(x)=\Theta_1(x,x),
 \qquad
 B_0(x)=B(x).
\]
The hypotheses on $F$ are supplied by \cref{prop:diagonal-doubling}, and
\eqref{eq:abstract-schroeder} is precisely
$B\bigl(\Theta_1(x,x)\bigr)=4B(x)$. Since $h_{n+1}=b_n$, we obtain
\[
 2^{\kappa_2(n+1)}b_n\in\Lambda_{(2)}.
\]
Moreover,
\begin{align*}
 \nu_2(d_n)
 &=\nu_2(n+1)-1
   +\nu_2\bigl(\lcm(1,2,\ldots,2n+2)\bigr)\\
 &=\nu_2(n+1)+\left\lfloor\log_2(n+1)\right\rfloor\\
 &=\kappa_2(n+1).
\end{align*}
This proves \eqref{eq:two-local-bn}. Multiplication by the odd integer
$d_n/2^{\nu_2(d_n)}$ gives $C_n\in\Lambda_{(2)}$.
\end{proof}

\begin{proof}[Proof of \cref{thm:Cn-Lambda}]
By \cref{thm:stong-numerators,eq:ring-hierarchy-main},
\[
 C_n\in\LambdaSt
 \subset\St
 \subset\widetilde\Lambda
 \subset\Lambda\!\left[\frac12\right].
\]
On the other hand, \cref{prop:Cn-Lambda-two-local} gives
$C_n\in\Lambda_{(2)}$. In degree $-4n$, the group
$\Omega_{\Uu}^{-4n}$ is free abelian of finite rank, so its subgroup
$\Lambda^{-4n}$ is free. Hence, inside
$\Lambda^{-4n}\otimes\Q$,
\[
 \Lambda^{-4n}\!\left[\frac12\right]
 \cap
 \bigl(\Lambda^{-4n}\otimes\Ztwo\bigr)
 =\Lambda^{-4n}.
\]
Thus $C_n\in\Lambda^{-4n}$.
\end{proof}

\begin{remark}[(The first numerator classes)]
Let
\[
 \theta_{ij}=[x^iy^j]\Theta_1(x,y).
\]
Direct expansion gives
\[
 C_1=-\theta_{11},
 \qquad
 C_2=C_1^2-3\theta_{21},
\]
and
\[
 C_3
 =-25\theta_{31}-2\theta_{22}
  +6C_1C_2-3C_1^3.
\]
Thus $C_1,C_2,C_3$ already lie in the smaller coefficient ring generated by
$\Theta_1$. Theorem~\ref{thm:Cn-Lambda} gives the uniform statement in all degrees
in $\Lambda$, while Theorem~\ref{thm:stong-numerators} additionally gives
explicit integral expressions in quotient Stong manifold classes.
\end{remark}

\section{Chern numbers of the primitive numerators}\label{sec:chern-divisibility}

In this section, we prove that all Chern numbers of
$C_n\in\Lambda^{-4n}\subset\LambdaSt^{-4n}$ are divisible by
$d_n=c_{2n}(C_n)$, even though $C_n$ is primitive in
$\Omega_{\Uu}^{-4n}$.

We use a genus that records all Chern numbers simultaneously. Let
\[
 Q_{\boldsymbol u}(z)=1+u_1z+u_2z^2+\cdots
 \in\mathbb Z[u_1,u_2,\ldots][[z]].
\]
For a stably complex manifold $M$ with formal Chern roots
$x_1,\ldots,x_r$, put
\begin{equation}\label{eq:universal-chern-genus}
 \mathcal C_{\boldsymbol u}(M)
 =\left\langle\prod_{i=1}^rQ_{\boldsymbol u}(x_i),[M]\right\rangle.
\end{equation}
The coefficient of $u_1^{m_1}u_2^{m_2}\cdots$ in
\eqref{eq:universal-chern-genus} is $m_\lambda[M]$, where
$\lambda=(1^{m_1}2^{m_2}\cdots)$. Since products of ordinary Chern classes
are integral linear combinations of the $m_\lambda$, this genus also
records their integrality and common divisors. The exponential of
$\mathcal C_{\boldsymbol u}$ is
\[
 f_{\boldsymbol u}(z)=\frac{z}{Q_{\boldsymbol u}(z)}.
\]

Let $B_{\boldsymbol u}(x)$ be the logarithm of the corresponding two-valued
formal group. Then
\begin{equation}\label{eq:chern-genus-two-valued-exponential}
 B_{\boldsymbol u}^{-1}(-z^2)
 =f_{\boldsymbol u}(z)f_{\boldsymbol u}(-z)
 =-\frac{z^2}{Q_{\boldsymbol u}(z)Q_{\boldsymbol u}(-z)}.
\end{equation}
The product $Q_{\boldsymbol u}(z)Q_{\boldsymbol u}(-z)$ is an even series with
constant term $1$ and integral coefficients. Therefore
\[
 B_{\boldsymbol u}^{-1}(x)
 \in\mathbb Z[u_1,u_2,\ldots][[x]].
\]
The compositional inverse of a strict integral series is again integral, so
\begin{equation}\label{eq:chern-genus-logarithm-integral}
 B_{\boldsymbol u}(x)
 \in\mathbb Z[u_1,u_2,\ldots][[x]].
\end{equation}
Applying the genus $\mathcal C_{\boldsymbol u}$ to the universal identity \eqref{eq:def-B}, we get
\begin{equation}\label{eq:chern-genus-bn}
 \mathcal C_{\boldsymbol u}(b_n)
 =[x^{n+1}]B_{\boldsymbol u}(x)
 \in\mathbb Z[u_1,u_2,\ldots].
\end{equation}
Consequently, every Chern number of the rational cobordism class $b_n$ is an
integer. Since $C_n=d_nb_n$, every Chern number of $C_n$ is divisible by
$d_n$.

To compute the top Chern number, specialize $u_1=u$ and $u_j=0$ for $j\geq2$,
so that $Q_{\boldsymbol u}(z)=1+uz$.
Then \eqref{eq:chern-genus-two-valued-exponential} becomes
\[
 B_u^{-1}(x)=\frac{x}{1+u^2x},
 \qquad
 B_u(x)=\frac{x}{1-u^2x}
       =x+\sum_{n\geq1}u^{2n}x^{n+1}.
\]
The coefficient of $u^{2n}$ in the corresponding genus is the top Chern
number. Hence
\[
 c_{2n}(b_n)=1,
 \qquad
 c_{2n}(C_n)=d_n.
\]

Since all Chern numbers of $C_n$ are divisible by $d_n$ and the top Chern
number equals $d_n$, their greatest common divisor is $d_n$. This proves
\cref{thm:chern-divisibility}.

\section{Odd genera and characteristic classes on the Stong ring}\label{sec:odd-genera}

This section studies the odd genera $\Bc_N$ and their applications to the
logarithm $B(x)$ of the universal two-valued formal group. We first prove that
the universal odd genus has no $2$-primary denominators on the Stong ring. We
then determine the exact odd-primary denominators in $b_n$, identify the
precise range in which the finite truncations are integral on $\St$, explain
why oddification does not change a genus on this ring, and finish with
undecomposable classes in $\Lambda$ that distinguish the Buchstaber genus
from the pair $(\Och,\chi)$.

\subsection{Two-local integrality of the universal odd genus}
\label{sec:two-local-integrality}

Let $B_{\Bc,\infty}(x)$ be the logarithm of the two-valued formal group obtained
from \eqref{eq:bc-infty-exponential}, and let
\[
 \Gamma(x)=B_{\Bc,\infty}^{-1}(x).
\]

\begin{proof}[Proof of \cref{thm:bc-infty-two-local}]
Put
\[
 A(x)=1+a_1x+a_2x^2+\cdots
\]
and set $s=-t^2$. Since $f_{\Bc,\infty}$ is odd, the modulus square identity is
\[
 \Gamma(-t^2)=-f_{\Bc,\infty}(t)^2.
\]
Differentiating this identity and using
\eqref{eq:bc-infty-exponential} gives
\begin{equation}\label{eq:Gamma-two-local-differential}
 s\bigl(\Gamma'(s)\bigr)^2
 =\Gamma(s)A(\Gamma(s)).
\end{equation}
Write
\[
 \Gamma(s)=s+\sum_{n\geq2}\gamma_ns^n.
\]
Comparison of the coefficient of $s^n$ in
\eqref{eq:Gamma-two-local-differential} gives
\[
 (2n-1)\gamma_n
 \in
 \mathbb Z[a_1,\ldots,a_{n-1},
            \gamma_2,\ldots,\gamma_{n-1}].
\]
The integer $2n-1$ is odd. Induction therefore yields
\begin{equation}\label{eq:Gamma-two-local}
 \Gamma(s)\in\Ztwo[a_1,a_2,\ldots][[s]].
\end{equation}
The compositional inverse of a strict series over a ring is again defined over
that ring, and hence
\begin{equation}\label{eq:B-infty-two-local}
 B_{\Bc,\infty}(s)
 \in\Ztwo[a_1,a_2,\ldots][[s]].
\end{equation}
Equivalently, if $Q(t)=t/f_{\Bc,\infty}(t)$ and
$K(s)=s/\Gamma(s)$, then
\[
 Q(t)Q(-t)=K(-t^2),
 \qquad
 K(s)\in\Ztwo[a_1,a_2,\ldots][[s]].
\]

It remains to pass from the logarithm to the coefficient ring of the two-valued
law. Introduce independent variables $U,V$ and put
\[
 Z_\pm(U,V)=\Gamma\bigl((U\mathbin{\pm}V)^2\bigr).
\]
By \eqref{eq:Gamma-two-local}, the product $Z_+Z_-$ is invariant under the
independent sign changes $U\mapsto-U$ and $V\mapsto-V$, and hence belongs to
$\Ztwo[a_1,a_2,\ldots][[U^2,V^2]]$. Furthermore,
\[
 \frac{Z_+(U,V)+Z_-(U,V)}{2}
 \in\Ztwo[a_1,a_2,\ldots][[U^2,V^2]],
\]
because in every pair
$(U+V)^{2r}+(U-V)^{2r}$ the odd powers cancel and the remaining coefficients
have a common factor $2$. We may therefore substitute
\[
 U^2=B_{\Bc,\infty}(x),
 \qquad
 V^2=B_{\Bc,\infty}(y),
\]
using \eqref{eq:B-infty-two-local}. The resulting series are respectively the
images of $\Theta_2(x,y)$ and $\Theta_1(x,y)/2$. Thus $\Bc_\infty$ sends every
coefficient of $\Theta_1/2$ and $\Theta_2$ to
$\Ztwo[a_1,a_2,\ldots]$. Since
$\St=\widetilde\Lambda\cap\OmU$ and $\widetilde\Lambda$ is generated by these
coefficients, the required inclusion follows. Every $\Bc_N$ is obtained by
putting $a_j=0$ for $j>N$, which proves the final assertion.
\end{proof}

\subsection{Exact odd denominators}
\label{sec:odd-denominators}

The logarithm associated with $\Bc_N$ satisfies a differential identity that
excludes all $2$-primary factors from the denominators of its specialized
coefficients $\beta_n=\Bc_N(b_n)$. The remaining odd-primary lower bound is
obtained by comparison with the index genus $\Phi_t$.

\begin{proof}[Proof of \cref{thm:bc-odd-denominator}]
Put
\[
 A_N(x)=1+a_1x+\cdots+a_Nx^N,
 \qquad
 G_N(z)=\int_0^z A_N(s^2)^{-1/2}\,\dd s.
\]
By \eqref{eq:bcN-two-valued-logarithm},
\[
 B_{\Bc,N}(x)=G_N(\sqrt{x})^2.
\]
Write
\[
 B_{\Bc,N}(x)=x+\sum_{r\geq1}\beta_r x^{r+1}.
\]
The coefficient $\beta_r$ depends only on $a_1,\ldots,a_r$. Differentiating
$G_N(\sqrt{x})^2$ gives
\begin{equation}\label{eq:bc-differential-equation}
 xA_N(x)\bigl(B_{\Bc,N}'(x)\bigr)^2=B_{\Bc,N}(x).
\end{equation}
Comparison of the coefficients of $x^{r+1}$ in
\eqref{eq:bc-differential-equation} gives
\begin{equation}\label{eq:bc-recursion-denominator}
 (2r+1)\beta_r
 \in\mathbb Z[a_1,\ldots,a_r,\beta_1,\ldots,\beta_{r-1}].
\end{equation}
Induction on $r$ therefore shows that
\begin{equation}\label{eq:bc-no-two-denominators}
 \beta_r\in\Ztwo[a_1,\ldots,a_r].
\end{equation}
Thus no power of $2$ occurs in the denominator of $\beta_r$.

Write
\begin{equation}\label{eq:def-rho-j}
 A_N(x)^{-1/2}=\sum_{j\geq0}\rho_jx^j.
\end{equation}
The binomial expansion gives
\[
 \rho_j\in\mathbb Z\left[\frac12,a_1,\ldots,a_N\right].
\]
Let $m=n+1$ and take $N\geq n$. Since
\[
 G_N(z)=\sum_{j\geq0}\frac{\rho_j}{2j+1}z^{2j+1},
\]
we obtain
\begin{align}
 \beta_n
 &=\sum_{j=0}^{m-1}
   \frac{\rho_j\rho_{m-1-j}}
        {(2j+1)(2m-2j-1)}\notag\\
 &=\frac1m\sum_{j=0}^{m-1}
   \frac{\rho_j\rho_{m-1-j}}{2j+1}.
 \label{eq:bc-coefficient-formula}
\end{align}
The second equality follows by pairing the summands with indices $j$ and
$m-1-j$.

Let $p$ be an odd prime. The coefficients $\rho_j$ are $p$-integral, and
\eqref{eq:bc-coefficient-formula} shows that the $p$-adic denominator of every
coefficient of $\beta_n$ is at most
\[
 p^{\nu_p(m)+\lfloor\log_p(2m-1)\rfloor}.
\]
This is exactly the $p$-primary part of
\[
 \delta_n=m_{\mathrm{odd}}\lcm(1,3,5,\ldots,2m-1),
 \qquad
 m_{\mathrm{odd}}=\frac{m}{2^{\nu_2(m)}}.
\]
Together with \eqref{eq:bc-no-two-denominators}, this proves
\begin{equation}\label{eq:bc-upper-odd}
 \delta_n\beta_n\in\mathbb Z[a_1,\ldots,a_n].
\end{equation}

It remains to prove that no odd factor can be removed. We use the index
genus $\Phi_t$ from \cref{sec:genus}. Put $c=t(1-t)$ and define the odd
strict series
\begin{equation}\label{eq:oddification-Phi}
 \widetilde f_{\Phi,t}(z)
 =\sqrt{-f_{\Phi,t}(z)f_{\Phi,t}(-z)},
 \qquad
 \widetilde f_{\Phi,t}(z)=z+O(z^3).
\end{equation}
Since $\widetilde f_{\Phi,t}$ is odd,
\[
 \widetilde f_{\Phi,t}(z)\widetilde f_{\Phi,t}(-z)
 =f_{\Phi,t}(z)f_{\Phi,t}(-z).
\]
Hence its modulus square construction has the same logarithm
$B_{\Phi,t}(x)$ as $\Phi_t$. From the formula for
$f_{\Phi,t}(z)f_{\Phi,t}(-z)$ in the proof of \cref{lem:specialized-B},
direct differentiation gives
\begin{equation}\label{eq:oddification-equation}
 \bigl(\widetilde f_{\Phi,t}'(z)\bigr)^2
 =\bigl(1-c\widetilde f_{\Phi,t}(z)^2\bigr)^2
  \left(1+\left(\frac14-c\right)\widetilde f_{\Phi,t}(z)^2\right).
\end{equation}
Consequently, comparison with \eqref{eq:bcN-exponential} gives the
specialization
\begin{equation}\label{eq:bc-to-Phi-specialization}
 \begin{aligned}
 a_1&\longmapsto 3c-\frac14,\\
 a_2&\longmapsto 3c^2-\frac c2,\\
 a_3&\longmapsto c^3-\frac{c^2}{4},\\
 a_j&\longmapsto0,\qquad j\geq4,
 \end{aligned}
\end{equation}
which sends $\beta_n$ to $[x^{n+1}]B_{\Phi,t}(x)=\Phi_t(b_n)$ for
$n\geq3$. For every odd prime $p$, this specialization takes values in
$\mathbb Z_{(p)}[t]$. By \cref{cor:primitive-polynomial,lem:substitution},
\[
 d_n\Phi_t(b_n)=\Phi_t(C_n)=\mathcal R_n\bigl(t(1-t)\bigr)\in\mathbb Z[t]
\]
is primitive.

For $n=1,2$, both assertions follow directly from the displayed formulas
for $\beta_1$ and $\beta_2$, so suppose that $n\geq3$. For every odd prime
$p$, we have
\[
 \nu_p(d_n)=\nu_p(\delta_n),
\]
and hence $\delta_n/d_n$ is a unit in $\mathbb Z_{(p)}$. Under
\eqref{eq:bc-to-Phi-specialization},
\[
 \delta_n\beta_n\longmapsto
 \delta_n\Phi_t(b_n)=\frac{\delta_n}{d_n}\,\Phi_t(C_n).
\]
The polynomial on the right is not divisible by $p$, since $\Phi_t(C_n)$
is primitive and $\delta_n/d_n$ is a $p$-adic unit. Consequently,
$\delta_n\beta_n$ is not divisible by $p$. In particular, whenever
$p\mid\delta_n$, the polynomial $(\delta_n/p)\beta_n$ is not integral.
Thus for every odd prime $p$, the exact $p$-primary denominator of
$\beta_n$ is $p^{\nu_p(\delta_n)}$. It follows that $\delta_n$ is the exact
common denominator of $\beta_n$, and that $\delta_n\beta_n$ is not divisible
by any odd prime.

It remains only to exclude the possibility that all coefficients of \(\delta_n\beta_n\) are even, since the preceding specialization argument applies only at odd primes. The coefficient of \(a_n\) in \(\beta_n\) is \([a_n]\beta_n=-\frac{1}{2n+1}\).
Indeed, it comes from the term \(-a_nz^{2n}/2\) in \(A_n(z^2)^{-1/2}\). Therefore
\[
[a_n]\delta_n\beta_n = -\frac{\delta_n}{2n+1}.
\]
Since \(2n+1\mid\delta_n\) and \(\delta_n\) is odd, this coefficient is an odd integer. Hence \(\delta_n\beta_n\) is not divisible by \(2\). Together with the preceding argument, which excludes all odd primes, this shows that \(\delta_n\beta_n\) is primitive. This proves \cref{thm:bc-odd-denominator}.
\end{proof}

\subsection{Integral truncations of the universal odd genus}
\label{sec:integral-truncations}

We now show that $\Bc_{N}$ takes integral values on the Stong ring $\St$ if and only if $N \leq 3$ and thus prove \cref{thm:bc-stong-threshold}.

For $N=3$, the genus $\Bc_3$ coincides with the Buchstaber genus. The
associated two-valued law is defined by the polynomial (see \cite{Buc90,Kor26})
\begin{equation}\label{eq:buchstaber-polynomial}
 \mathcal B_{\boldsymbol{a}}(z;x,y)
 =\bigl(x+y+z-a_2xyz\bigr)^2
 -4\bigl(1+a_3xyz\bigr)
  \bigl(xy+xz+yz+a_1xyz\bigr).
\end{equation}
Put
\begin{equation}\label{eq:def-A-N-bc}
\begin{aligned}
 Q(x,y)
 &=1-2a_2xy-4a_3xy(x+y)
   +(a_2^2-4a_1a_3)x^2y^2,\\
 P(x,y)
 &=x+y+2a_1xy+a_2xy(x+y)+2a_3x^2y^2.
\end{aligned}
\end{equation}
Then
\begin{equation}\label{eq:buchstaber-polynomial-monic-data}
 \mathcal B_{\boldsymbol{a}}(z;x,y)
 =Q(x,y)z^2-2P(x,y)z+(x-y)^2.
\end{equation}
After division by $Q(x,y)$, comparison with
\eqref{universal_two_valued_formal_group} gives
\begin{equation}\label{eq:bc-images-theta}
 \Bc_3\left(\frac{\Theta_1(x,y)}2\right)
 =\frac{P(x,y)}{Q(x,y)},
 \qquad
 \Bc_3\bigl(\Theta_2(x,y)\bigr)
 =\frac{(x-y)^2}{Q(x,y)}.
\end{equation}
Both $P$ and $Q$ belong to
$\mathbb Z[2a_1,a_2,2a_3][x,y]$, and $Q(0,0)=1$. Hence the right-hand sides of
\eqref{eq:bc-images-theta} have coefficients in
$\mathbb Z[2a_1,a_2,2a_3]$. Since $\widetilde\Lambda$ is generated by the
coefficients of $\Theta_1/2$ and $\Theta_2$, and
$\St=\widetilde\Lambda\cap\OmU$ by
\cite[Theorems~4.1, 4.4 and Lemma~6.6]{Buc78}, we obtain
\[
 \Bc_3(\St)\subset\mathbb Z[2a_1,a_2,2a_3].
\]
The cases $N=2$ and $N=1$ follow by the specializations $a_3=0$ and
$a_2=a_3=0$, respectively.

We next prove failure of integrality for $N>3$. Let
\[
 \theta_{32}=[x^3y^2]\Theta_1(x,y).
\]
This class belongs to $\St$, because the coefficients of $\Theta_1$ lie in
$\widetilde\Lambda\cap\OmU$. Fix $N>3$ and specialize $a_j=0$ for $j\ne4$.
The resulting two-valued logarithm and its inverse are
\[
 B_4(x)
 =\left(\int_0^{\sqrt{x}}\frac{\dd t}{\sqrt{1+a_4t^8}}\right)^2
 =x-\frac{a_4}{9}x^5+O(x^9),
\]
\[
 E_4(x)=B_4^{-1}(x)
 =x+\frac{a_4}{9}x^5+O(x^9).
\]
The first coefficient of the corresponding two-valued law is
\[
 \Theta_{1,4}(x,y)
 =E_4\!\left((\sqrt{B_4(x)}+\sqrt{B_4(y)})^2\right)
  +E_4\!\left((\sqrt{B_4(x)}-\sqrt{B_4(y)})^2\right).
\]
Modulo terms of total degree at least $6$, this gives
\begin{equation}\label{eq:theta14-degree-five}
 \Theta_{1,4}(x,y)
 =2x+2y+\frac{2a_4}{9}
  \sum_{j=1}^{4}\binom{10}{2j}x^{5-j}y^j.
\end{equation}
Therefore
\[
\left. \Bc_N(\theta_{32})\right|_{a_j=0,\ j\ne4} = [x^3y^2]\Theta_{1,4}(x,y) =\frac{140}{3}a_4,
\]
which cannot be the specialization of a polynomial in
$\mathbb Z[a_1,\ldots,a_N]$. This proves
\cref{thm:bc-stong-threshold}.

\subsection{Oddification of Hirzebruch genera}
\label{sec:oddification}

Let $j\colon\OmSp/\Tors\to\OmU$ be the forgetful map. The Stong ring is the
$2$-primary saturation of its image:
\[
 \St=\operatorname{Sat}_2\bigl(j(\OmSp/\Tors)\bigr)
 =j(\OmSp/\Tors)\!\left[\frac12\right]\cap\OmU.
\]
It is therefore enough to compare the two genera on quaternionic
representatives, where the Chern roots occur in pairs $\pm x_i$ and the
characteristic class can be written in the Pontryagin roots $x_i^2$.

\begin{proof}[Proof of \cref{thm:oddification-stong}]
Put
\begin{equation}\label{eq:oriented-series-Q}
 P_Q(u)=\sqrt{Q(\sqrt{u})Q(-\sqrt{u})}\in R[[u]],\qquad P_Q(0)=1.
\end{equation}
The multiplicative sequence determined by $P_Q$ defines an oriented genus
\begin{equation}\label{eq:oriented-extension-Q}
 \varphi_Q^{\mathrm{SO}}(M)
 =\left\langle\prod_{i=1}^rP_Q(u_i),[M]\right\rangle,
\end{equation}
where $u_1,\ldots,u_r$ are the formal Pontryagin roots of the oriented
manifold $M$. After inverting $2$, this is the extension obtained from the
composite \cite{Sto67}
\[
 \Omega_*^{\mathrm{SO}}\longrightarrow
 \Omega_*^{\mathrm{SO}}\!\left[\frac12\right]
 \cong
 \Omega_*^{\mathrm{Sp}}\!\left[\frac12\right]
 \longrightarrow
 \Omega_*^{\mathrm U}\!\left[\frac12\right]
 \xrightarrow{\ \varphi_Q\ } R.
\]

Let $Y$ be a stably quaternionic manifold. The formal Chern roots of its
underlying stably complex tangent bundle occur in pairs
\[
 x_1,-x_1,\ldots,x_r,-x_r,
\]
and
\[
 p(\tau_Y)=\prod_{i=1}^r(1+x_i^2)^2.
\]
Thus each $u_i=x_i^2$ occurs twice among the ordinary Pontryagin roots. Hence
\begin{align*}
 \varphi_Q(Y)
 &=\left\langle
   \prod_{i=1}^r Q(x_i)Q(-x_i),[Y]
  \right\rangle\\
 &=\left\langle
   \prod_{i=1}^r P_Q(u_i)^2,[Y]
  \right\rangle
 =\varphi_Q^{\mathrm{SO}}(Y).
\end{align*}
The series $\widetilde Q$ is even, and
\[
 \widetilde Q(x_i)^2
 =Q(x_i)Q(-x_i)
 =P_Q(u_i)^2.
\]
Therefore the same calculation gives
\[
 \varphi_{\widetilde Q}(Y)
 =\left\langle
   \prod_{i=1}^r\widetilde Q(x_i)\widetilde Q(-x_i),[Y]
  \right\rangle
 =\varphi_Q^{\mathrm{SO}}(Y).
\]
Thus $\varphi_Q$ and $\varphi_{\widetilde Q}$ agree on the image of
quaternionic cobordism in complex cobordism.

Now let $X\in\St$. Since $\St$ is the $2$-primary saturation of that image, there are
an integer $k\geq0$ and a class $Y\in\OmSp/\Tors$ such that
\[
 2^kX=j(Y)\quad\text{in }\OmU.
\]
Consequently,
\[
 2^k\bigl(\varphi_Q(X)-\varphi_{\widetilde Q}(X)\bigr)=0.
\]
Since $2^k$ is a unit in the $\mathbb Q$-algebra $R$, the required equality
follows.
\end{proof}

\begin{proof}[Proof of \cref{cor:bc-euler-todd-stong}]
The top Chern number genus $\chi$ and its oddification have Hirzebruch series
\[
 Q_\chi(z)=1+z,
 \qquad
 \widetilde Q_\chi(z)=\sqrt{1-z^2}.
\]
The oddification has exponential
\[
 \frac{z}{\widetilde Q_\chi(z)}
 =\frac{z}{\sqrt{1-z^2}}.
\]
This is the unique odd strict solution of
\[
 (\,f')^2=(1+f^2)^3
 =1+3f^2+3f^4+f^6,
\]
which is the specialization of $f_{\Bc,3}$ at
$(a_1,a_2,a_3)=(-3,3,-1)$.

For the Todd genus,
\[
 Q_{\Td}(z)=\frac{z}{1-e^{-z}},
 \qquad
 \widetilde Q_{\Td}(z)=\frac{z}{2\sinh(z/2)}.
\]
The exponential of the oddification is $2\sinh(z/2)$, and it satisfies
\[
 (f')^2=1+\frac14\,f^2.
\]
This is the specialization of \eqref{eq:bcN-exponential} at
$(a_1,a_2,a_3)=(-1/4,0,0)$. The result now follows from
\cref{thm:oddification-stong}.
\end{proof}

\subsection{Information not detected by the Ochanine genus and the top Chern number}
\label{sec:bc-not-determined}

\begin{proof}[Proof of \cref{thm:bc-not-determined}]
Recall the coefficient classes
\[
 \theta_{m1}=[x^m y]\Theta_1(x,y)\in\Lambda^{-4m},\qquad m\geq1.
\]
Taking the coefficient of $y$ in \eqref{eq:bc-images-theta} gives
\[
 [y]\,\Bc\bigl(\Theta_1(x,y)\bigr)
 =2+4a_1x+6a_2x^2+8a_3x^3.
\]
Thus
\[
 \Bc(\theta_{11})=4a_1,\qquad
 \Bc(\theta_{21})=6a_2,\qquad
 \Bc(\theta_{31})=8a_3,
\]
and $\Bc(\theta_{m1})=0$ for $m\geq4$. For $n\geq5$, put
\begin{equation}\label{eq:undecomposable-comparison-classes}
 \begin{aligned}
  X_n&=2\theta_{n1}+\theta_{31}\theta_{11}^{\,n-3},\\
  Y_n&=\theta_{n1}+8\theta_{31}\theta_{21}\theta_{11}^{\,n-5}.
 \end{aligned}
\end{equation}
Every summand belongs to $\Lambda^{-4n}\subset\St^{-4n}$. Multiplicativity
of the genus gives
\[
 \begin{aligned}
  \Bc(X_n)&=128(4a_1)^{n-5}a_1^2a_3,\\
  \Bc(Y_n)&=384(4a_1)^{n-5}a_2a_3.
 \end{aligned}
\]
Their difference is the nonzero polynomial stated in the theorem, and
setting $a_3=0$ gives $\Och(X_n)=\Och(Y_n)=0$. By
\cref{cor:bc-euler-todd-stong}, evaluation at $(a_1,a_2,a_3)=(-3,3,-1)$
also gives
\[
 \chi(X_n)=\chi(Y_n)=-1152(-12)^{n-5}.
\]

To prove undecomposability, let $\mathfrak a_{\Q}$ be the augmentation ideal
of $\OmU\otimes\Q$. Modulo its square, compositional inversion gives
\[
 E(x)=B^{-1}(x)
 \equiv x-\sum_{j\geq1}b_jx^{j+1}\pmod{\mathfrak a_{\Q}^{\,2}}.
\]
Put
\[
 X=B(x),\qquad Y=B(y),\qquad
 T_\pm=(\sqrt X\pm\sqrt Y)^2.
\]
Substituting the preceding congruence into the two branches
\eqref{eq:zpm-B-coordinate} gives
\[
 \Theta_1(x,y)
 \equiv T_++T_-
 -\sum_{j\geq1}b_j\bigl(T_+^{j+1}+T_-^{j+1}\bigr)
 \pmod{\mathfrak a_{\Q}^{\,2}}.
\]
Since $T_++T_-=2B(x)+2B(y)$, this term contributes no mixed monomials.
In each summand of the remaining sum there is already a factor
$b_j\in\mathfrak a_{\Q}$. Since $B(x)\equiv x$ and $B(y)\equiv y$ modulo
$\mathfrak a_{\Q}$, the binomial theorem gives
\[
 T_+^{j+1}+T_-^{j+1}
 \equiv
 2\sum_{k=0}^{j+1}\binom{2j+2}{2k}x^{j+1-k}y^k
 \pmod{\mathfrak a_{\Q}}.
\]
Multiplication by $b_j$ places the error in
$\mathfrak a_{\Q}^{\,2}$. For $m\geq1$, only $j=m$ and $k=1$ contribute
to the coefficient of $x^my$. Consequently,
\[
 \theta_{m1}
 \equiv-2\binom{2m+2}{2}b_m
 =-2(m+1)(2m+1)b_m
 \pmod{\mathfrak a_{\Q}^{\,2}}.
\]
Since $s_{2m}$ vanishes on decomposables and
$s_{2m}(b_m)=2(-1)^m$ by \cref{thm:main}, we obtain
\begin{equation}\label{eq:milnor-theta-m1}
 s_{2m}(\theta_{m1})=4(-1)^{m+1}(m+1)(2m+1)\ne0.
\end{equation}
For $n\geq5$, the product terms in
\eqref{eq:undecomposable-comparison-classes} have at least two
positive-dimensional factors. Hence
\[
 \begin{aligned}
  s_{2n}(X_n)&=2s_{2n}(\theta_{n1}),\\
  s_{2n}(Y_n)&=s_{2n}(X_n-Y_n)=s_{2n}(\theta_{n1})\ne0.
 \end{aligned}
\]
These nonzero Milnor numbers prove that $X_n$, $Y_n$, and $X_n-Y_n$ are
undecomposable, even after rationalization.
\end{proof}

\section{Proofs of the spin and string integrality theorems}
\label{sec:spin-integrality}

We first prove \cref{thm:bc-spin-below24} by comparing the Buchstaber and
Ochanine genera. In real dimensions below $24$, the possible additional
denominators reduce to halves of a few integral characteristic numbers, whose
parity is controlled by the Wu formulas. We then prove
\cref{thm:bc-string-through24}; only dimension $24$ remains, and there we
evaluate the characteristic polynomial on an integral basis of string
bordism.

\begin{proof}[Proof of \cref{thm:bc-spin-below24}]
The oriented extension in \cref{sec:spin-main} already gives
\begin{equation}\label{eq:spin-dyadic-values}
 \Bc(M)\in\mathbb Z\!\left[\frac12,a_1,a_2,a_3\right].
\end{equation}
Thus no odd prime can occur in a denominator of $\Bc(M)$, and it remains
to exclude powers of $2$. We do this by proving that the value also belongs
to $\Ztwo[a_1,a_2,a_3]$, where only odd denominators are allowed.
The classical spin integrality of the Ochanine genus gives
\[
 \Och(M)\in\mathbb Z[a_1,a_2].
\]
Indeed, for the normalization
$(f')^2=1-2\delta f^2+\varepsilon f^4$, its image on spin cobordism is
$\mathbb Z[16\delta,(8\delta)^2,\varepsilon]$
\cite[Introduction]{HH24}. Here $\delta=a_1/2$ and $\varepsilon=a_2$.
It therefore suffices to prove
$\Bc(M)-\Och(M)\in\Ztwo[a_1,a_2,a_3]$.

Write $p_i=p_i(\tau_M)$ and $w_i=w_i(\tau_M)$. The integral spin characteristic
classes $q_1\in H^4(M;\mathbb Z)$ and $q_2\in H^8(M;\mathbb Z)$ satisfy
\begin{equation}\label{eq:spin-characteristic-classes}
 p_1=2q_1,\qquad p_2=q_1^2+2q_2,\qquad
 \overline q_1=w_4,\qquad \overline q_2=w_8,
\end{equation}
where the bar denotes reduction modulo $2$
\cite[\S~5]{HHLZ22}. We also use
$\overline p_i=w_{2i}^2$ \cite[\S~15]{MS74}.

\emph{The calculation of the characteristic polynomial.}
Put $Q(z)=z/f_{\Bc}(z)$ and $Q_0(z)=Q(z)|_{a_3=0}$.
For the oriented extensions, these even series are evaluated on the square
roots of the ordinary Pontryagin roots, as in
\eqref{eq:oriented-series-Q}--\eqref{eq:oriented-extension-Q}.
The defining equation
\[
 (f_{\Bc}')^2=1-a_1f_{\Bc}^2+a_2f_{\Bc}^4-a_3f_{\Bc}^6
\]
gives
\begin{equation}\label{eq:spin-log-ratio}
 \log\frac{Q(z)}{Q_0(z)}
 =a_3\left(\frac{z^6}{14}-\frac{a_1z^8}{42}
  +\left(\frac{31a_1^2}{6930}+\frac{4a_2}{385}\right)z^{10}\right)
  +O(z^{12}),
\end{equation}
and
\[
 Q_0(z)=1+\frac{a_1}{6}z^2
  +\left(\frac{7a_1^2}{360}-\frac{a_2}{10}\right)z^4+O(z^6).
\]
These coefficients are obtained recursively from
$f_{\Bc}''=-a_1f_{\Bc}+2a_2f_{\Bc}^3-3a_3f_{\Bc}^5$ and
$f_{\Bc}(0)=0$, $f_{\Bc}'(0)=1$.

Let $u_i$ be the formal Pontryagin roots, so that
\[
 p(\tau_M)=\prod_i(1+u_i),\qquad
 p_j=\sum_{i_1<\cdots<i_j}u_{i_1}\cdots u_{i_j}.
\]
Thus the Pontryagin classes $p_j$ are the elementary symmetric functions
of the $u_i$. The power sums
\[
 s_j(p)=\sum_i u_i^j
\]
are expressed in the $p_i$ by Newton's identities. They are not additional
independent characteristic classes. In particular,
$\sum_i u_i=p_1$ and $\sum_i u_i^2=p_1^2-2p_2$.
All degree bounds below refer to ordinary cohomology on $M$:
\[
 \deg u_i=4,\qquad
 \deg p_j=\deg s_j(p)=4j.
\]
The grading $\deg a_j=-4j$ in the genus coefficient ring is separate from
this cohomological degree. The parameters $a_j$ are coefficients when we
select the component to pair with $[M]$. For example,
$a_1a_3s_4(p)$ has cohomological degree $16$ on $M$.

Put
\[
 \mathcal Q=\prod_iQ(\sqrt{u_i}),\qquad
 \mathcal Q_0=\prod_iQ_0(\sqrt{u_i}).
\]
The degree-$\dim M$ components of these classes evaluate to $\Bc(M)$ and
$\Och(M)$, respectively. Their constant terms are $1$, so the formal
logarithm of their ratio is defined. Write
\[
 D=\log\frac{\mathcal Q}{\mathcal Q_0}
   =\sum_i\log\frac{Q(\sqrt{u_i})}{Q_0(\sqrt{u_i})}.
\]
For a characteristic class $A$, let $[A]_d$ denote its component of
cohomological degree $d$, and let $[A]_{\leq d}$ denote the sum of its
components of degrees at most $d$. In \eqref{eq:spin-log-ratio}, the term
$z^{2j}$ becomes $u_i^j$ after substitution and becomes $s_j(p)$ after
summation over $i$. Hence
\[
 [D]_{\leq20}
 =a_3\left(\frac{s_3(p)}{14}-\frac{a_1s_4(p)}{42}
       +\left(\frac{31a_1^2}{6930}+\frac{4a_2}{385}\right)s_5(p)\right).
\]
The three displayed summands have cohomological degrees $12$, $16$, and
$20$. Terms coming from $O(z^{12})$ have cohomological degree at least
$24$, so they cannot contribute when $\dim M<24$.

To recover the difference of the characteristic classes from the
logarithmic difference, use the exact identity
\[
 \mathcal Q-\mathcal Q_0
 =\mathcal Q_0(e^D-1)
 =\mathcal Q_0\left(D+\frac{D^2}{2!}+\frac{D^3}{3!}+\cdots\right).
\]
The smallest possible cohomological degree in $D$ is $12$. A product of
two terms of $D$ therefore has degree at least $12+12=24$, and a product
of $r$ terms has degree at least $12r$. Consequently,
\[
 [e^D-1]_{\leq20}=[D]_{\leq20},\qquad
 [\mathcal Q-\mathcal Q_0]_{\leq20}
 =[\mathcal Q_0D]_{\leq20}.
\]
This discards powers $D^r$ with $r\geq2$, not quadratic monomials in the
parameters: the term involving $a_1a_3$, for example, is retained in $D$.

Since $D$ starts in degree $12$, only the components of $\mathcal Q_0$ of
degrees $0$, $4$, and $8$ can contribute to $\mathcal Q_0D$ through degree
$20$. From the displayed expansion of $Q_0$, we obtain
\[
 \begin{aligned}
  [\mathcal Q_0]_0&=1,\\
  [\mathcal Q_0]_4&=\frac{a_1p_1}{6},\\
  [\mathcal Q_0]_8
  &=\left(\frac{7a_1^2}{360}-\frac{a_2}{10}\right)(p_1^2-2p_2)
    +\frac{a_1^2}{36}p_2\\
  &=\frac{a_1^2(7p_1^2-4p_2)}{360}
    -\frac{a_2(p_1^2-2p_2)}{10}.
 \end{aligned}
\]
Indeed, a degree-$8$ term in the product comes either from $u_i^2$ in
one factor or from $u_i u_j$ in two distinct factors. Summing these
contributions uses $\sum_i u_i^2=p_1^2-2p_2$ and
$\sum_{i<j}u_i u_j=p_2$.
The components needed in dimensions $12$, $16$, and $20$ are therefore
\[
 \begin{aligned}
  [\mathcal Q-\mathcal Q_0]_{12}
   &=[D]_{12},\\
  [\mathcal Q-\mathcal Q_0]_{16}
   &=[D]_{16}+[\mathcal Q_0]_4[D]_{12},\\
  [\mathcal Q-\mathcal Q_0]_{20}
   &=[D]_{20}+[\mathcal Q_0]_4[D]_{16}
     +[\mathcal Q_0]_8[D]_{12}.
 \end{aligned}
\]
These identities list all ways in which the cohomological degrees of the
two factors can add to the dimension of $M$.

We next rewrite these components using integral spin characteristic
classes. The Newton identities needed here are
\[
 \begin{aligned}
  s_3(p)&=p_1^3-3p_1p_2+3p_3,\\
  s_4(p)&=p_1^4-4p_1^2p_2+4p_1p_3+2p_2^2-4p_4,\\
  s_5(p)&=p_1^5-5p_1^3p_2+5p_1^2p_3+5p_1p_2^2
          -5p_1p_4-5p_2p_3+5p_5.
 \end{aligned}
\]
Substituting these Newton identities into the three components above and
then using $p_1=2q_1$ and $p_2=q_1^2+2q_2$ from
\eqref{eq:spin-characteristic-classes} gives the exact expressions
\[
 \begin{aligned}
  [\mathcal Q-\mathcal Q_0]_{12}
   &=\frac{a_3}{14}\bigl(2q_1^3-12q_1q_2+3p_3\bigr),\\[4pt]
  [\mathcal Q-\mathcal Q_0]_{16}
   &=\frac{a_1a_3}{42}
     \bigl(12q_1^2q_2-8q_2^2-5q_1p_3+4p_4\bigr),
 \end{aligned}
\]
and
\[
 \begin{aligned}
  [\mathcal Q-\mathcal Q_0]_{20}
   &=\frac{a_1^2a_3}{6930}
     \bigl(18q_1^5-338q_1^3q_2+932q_1q_2^2\\
   &\hspace{3em}{}+124q_1^2p_3-343q_2p_3-90q_1p_4+155p_5\bigr)\\[4pt]
   &\quad+\frac{a_2a_3}{770}
     \bigl(-6q_1^5-144q_1^3q_2+56q_1q_2^2\\
   &\hspace{3em}{}+87q_1^2p_3-14q_2p_3-80q_1p_4+40p_5\bigr).
 \end{aligned}
\]
Every monomial in $q_1,q_2,p_3,p_4,p_5$ appearing here is an integral
cohomology class. Thus, for integrality
at $2$, a term whose rational coefficient has odd denominator needs no
further argument. We collect coefficients after the spin substitutions
and separate these terms from the possible half-integral terms.
For example, in cohomological degree $12$,
\[
 \frac{s_3(p)}{14}
 =\frac{2q_1^3-12q_1q_2+3p_3}{14}
 =\frac{p_3}{2}
  +\frac{q_1^3-6q_1q_2-2p_3}{7}.
\]
The second summand has odd denominator, so only the parity of
$\langle p_3,[M]\rangle$ can obstruct integrality at $2$ in this dimension.
In degree~$16$ the component is a multiple of $a_1a_3$. In degree~$20$,
we apply the same procedure separately to the coefficients of $a_1^2a_3$
and $a_2a_3$. Evaluating on $[M]$ gives
\begin{equation}\label{eq:spin-half-obstructions}
\begin{aligned}
 \dim M=12:\quad
 \Bc(M)-\Och(M)
 &\equiv\frac{a_3}{2}\langle p_3,[M]\rangle,\\
 \dim M=16:\quad
 \Bc(M)-\Och(M)
 &\equiv\frac{a_1a_3}{2}\langle q_1p_3,[M]\rangle,\\
 \dim M=20:\quad
 \Bc(M)-\Och(M)
 &\equiv\frac{a_1^2a_3}{2}\langle q_2p_3+p_5,[M]\rangle\\
 &\quad+\frac{a_2a_3}{2}\langle q_1^2p_3,[M]\rangle.
\end{aligned}
\end{equation}
Each congruence is taken modulo $\Ztwo[a_1,a_2,a_3]$. Equivalently, after substituting
$p_1=2q_1$ and $p_2=q_1^2+2q_2$, all omitted coefficients have odd
denominators. In dimensions $0$, $4$, and $8$, the two genera coincide,
because the correction starts in cohomological degree $12$.
Thus \eqref{eq:spin-half-obstructions} reduces the proof to the evenness of
four integral characteristic numbers. It does not yet establish their
parity.

\emph{The parity calculation.}
An integral characteristic number is even precisely when the reduction
of its characteristic class evaluates to zero on $[M]_2$. Using the
reductions in \eqref{eq:spin-characteristic-classes} and
$\overline p_i=w_{2i}^2$, the characteristic numbers in
\eqref{eq:spin-half-obstructions} reduce to the evaluations of
\[
 w_6^2,\qquad w_4w_6^2,\qquad
 w_8w_6^2+w_{10}^2,\qquad w_4^2w_6^2
\]
on the mod-$2$ fundamental class in dimensions $12$, $16$, $20$, and $20$,
respectively. We show that these evaluations vanish.
For a spin manifold, $w_1=w_2=w_3=0$, and its Wu classes satisfy
$v_2=0$, $v_4=w_4$. Thus, for classes of the appropriate degrees,
\begin{equation}\label{eq:spin-wu-pairings}
 \langle\operatorname{Sq}^2z,[M]_2\rangle=0,\qquad
 \langle\operatorname{Sq}^4z,[M]_2\rangle
 =\langle w_4z,[M]_2\rangle
\end{equation}
\cite[Theorem~11.14]{MS74}.
Since $w_4$ and $w_8$ lift integrally by
\eqref{eq:spin-characteristic-classes}, their first Steenrod squares vanish.
Wu's formula for the squares of Stiefel--Whitney classes
\cite[Problem~8-A]{MS74} gives
\[
 \operatorname{Sq}^2w_4=w_6,\quad
 \operatorname{Sq}^2w_6=0,\quad
 \operatorname{Sq}^2w_8=w_{10},\quad
 \operatorname{Sq}^2w_{10}=0.
\]
The Cartan formula now yields
\[
\begin{aligned}
 \operatorname{Sq}^2(w_4w_6)&=w_6^2,&
 \operatorname{Sq}^4(w_6^2)&=0,\\
 \operatorname{Sq}^2(w_4^3w_6)&=w_4^2w_6^2,&
 \operatorname{Sq}^2(w_8w_{10})&=w_{10}^2.
\end{aligned}
\]
Evaluation using \eqref{eq:spin-wu-pairings} proves the required vanishing
for $w_6^2$, $w_4w_6^2$, and $w_4^2w_6^2$, as well as for $w_{10}^2$.
Finally,
\[
\begin{aligned}
 \operatorname{Sq}^2(w_4w_6w_8)&=w_6^2w_8+w_4w_6w_{10},\\
 \operatorname{Sq}^4(w_6w_{10})&=w_{10}^2.
\end{aligned}
\]
For the second identity, use
$\operatorname{Sq}^4w_6=w_4w_6+w_{10}$,
$\operatorname{Sq}^4w_{10}=w_4w_{10}$, and
$\operatorname{Sq}^3w_6=\operatorname{Sq}^3w_{10}=0$. The latter follow
from $\operatorname{Sq}^3=\operatorname{Sq}^1\operatorname{Sq}^2$.
Consequently, in dimension $20$,
\[
 \langle w_6^2w_8,[M]_2\rangle
 =\langle w_4w_6w_{10},[M]_2\rangle
 =\langle w_{10}^2,[M]_2\rangle=0.
\]
All the integral characteristic numbers occurring with a factor $1/2$ in
\eqref{eq:spin-half-obstructions} are therefore even. This proves
$\Bc(M)\in\Ztwo[a_1,a_2,a_3]$ in dimensions $12$, $16$, and $20$.
In dimensions $0$, $4$, and $8$, the correction
$\mathcal Q-\mathcal Q_0$ has no component of top degree, since it starts in
cohomological degree $12$. Hence
\[
 \Bc(M)=\Och(M)\in\mathbb Z[a_1,a_2].
\]
If $\dim M$ is less than $24$ and is not divisible by $4$, then $\mathcal Q$
has no component in degree $\dim M$: it is expressed in Pontryagin classes, whose
cohomological degrees are multiples of $4$. Thus $\Bc(M)=0$.
These cases, together with dimensions $12$, $16$, and $20$, exhaust the
range $\dim M<24$.
Together with \eqref{eq:spin-dyadic-values} and
\[
 \mathbb Z\!\left[\frac12,a_1,a_2,a_3\right]
 \cap\Ztwo[a_1,a_2,a_3]=\mathbb Z[a_1,a_2,a_3],
\]
this proves the theorem. For a Calabi--Yau manifold, $c_1(\tau_M)=0$ in $H^2(M;\mathbb Z)$, so
$w_2(\tau_M)=c_1(\tau_M)\bmod2=0$.
\end{proof}

\begin{proof}[Proof of \cref{thm:bc-string-through24}]
Every string manifold is spin, so \cref{thm:bc-spin-below24} treats all
dimensions below $24$. It remains to consider a closed string manifold $M$ of
real dimension $24$. In particular, $p_1(\tau_M)=0$.

For $j\geq1$, put
\[
 \ell_j=[z^{2j}]\log Q(z).
\]
The coefficients needed in dimension $24$, obtained recursively from the
defining differential equation for $f_{\Bc}$, are
\begin{equation}\label{eq:string-log-coefficients}
\begin{aligned}
 \ell_2&=\frac{a_1^2}{180}-\frac{a_2}{10},\\
 \ell_3&=\frac{a_1^3}{2835}+\frac{a_1a_2}{105}
          +\frac{a_3}{14},\\
 \ell_4&=\frac{a_1^4}{37800}-\frac{a_1^2a_2}{1050}
          -\frac{a_1a_3}{42}-\frac{a_2^2}{300},\\
 \ell_6&=\frac{691a_1^6}{3831077250}
          -\frac{191a_1^4a_2}{70945875}
          -\frac{514a_1^3a_3}{945945}
          -\frac{5933a_1^2a_2^2}{47297250}\\
 &\hspace{2em}{}
          -\frac{562a_1a_2a_3}{105105}
          -\frac{a_2^3}{4875}
          -\frac{11a_3^2}{2548}.
\end{aligned}
\end{equation}
Since $p_1=0$, one has
\[
 \log\mathcal Q=\sum_{j\geq2}\ell_js_j(p).
\]
Only the partitions $6$, $4+2$, $3+3$, and $2+2+2$ can contribute to its
exponential in cohomological degree $24$. Newton's identities give
\[
 s_2(p)=-2p_2,\qquad s_3(p)=3p_3,\qquad
 s_4(p)=2p_2^2-4p_4,
\]
and
\[
 s_6(p)=-2p_2^3+3p_3^2+6p_2p_4-6p_6.
\]
Consequently,
\begin{equation}\label{eq:string-degree24-class}
\begin{aligned}
 [\mathcal Q]_{24}
 &=\left(-2\ell_6-4\ell_2\ell_4-\frac43\ell_2^3\right)p_2^3\\
 &\quad+\left(3\ell_6+\frac92\ell_3^2\right)p_3^2
       +\left(6\ell_6+8\ell_2\ell_4\right)p_2p_4
       -6\ell_6p_6.
\end{aligned}
\end{equation}

Han and Huang give an integral basis $[M_1],\ldots,[M_4]$ of
$\Omega_{24}^{\mathrm{String}}$
\cite[Theorem~1 and Corollary~3]{HH22String} and compute its Pontryagin
numbers \cite[Theorem~2.1]{HH22String}. Write a homogeneous polynomial of
weight $6$ as its coefficient vector in the ordered monomial basis
\[
 \mathcal A=
 (a_1^6,a_1^4a_2,a_1^3a_3,a_1^2a_2^2,
   a_1a_2a_3,a_2^3,a_3^2).
\]
Substitution of those characteristic numbers into
\eqref{eq:string-degree24-class} gives
\begin{equation}\label{eq:string-basis-values}
\begin{aligned}
 [\Bc(M_1)]_{\mathcal A}
  &=(0,6144,23040,-13824,622080,331776,2332800),\\
 [\Bc(M_2)]_{\mathcal A}
  &=(4096,-30720,714240,3552768,19284480,-13602816,72316800),\\
 [\Bc(M_3)]_{\mathcal A}
  &=(0,0,-32,-16,576,288,0),\\
 [\Bc(M_4)]_{\mathcal A}
  &=(0,0,0,0,28,8,12).
\end{aligned}
\end{equation}
Every entry is an integer. Since the four classes form an integral basis,
additivity of the genus gives
\[
 \Bc(M)\in\mathbb Z[a_1,a_2,a_3]
\]
for every closed string $24$-manifold $M$.
\end{proof}

\begin{example}[(The Anderson--Brown--Peterson manifold)]
\label[example]{ex:abp-spin24}
By Anderson--Brown--Peterson \cite{ABP67}, there exists a closed smooth
spin $24$-manifold $\Sigma_{\mathrm{ABP}}$ for which
\[
 \bigl\langle s_6(p),[\Sigma_{\mathrm{ABP}}]\bigr\rangle
 \equiv1\pmod2.
\]
See also \cite[\S~1.3]{AS25} for this precise formulation.
As in the preceding proof, $p_i=p_i(\tau_M)$ and
$s_j(p)=\sum_i u_i^j$ for the ordinary Pontryagin roots $u_i$.
We now show directly that the oddness of this characteristic number forces
$\Bc(\Sigma_{\mathrm{ABP}})$ to be nonintegral.

It suffices to examine the coefficient of $a_3^2$. Set $a_1=a_2=0$, and
write $f=\left.f_{\Bc}\right|_{a_1=a_2=0}$ and $Q_*(z)=z/f(z)$.
The equation $(f')^2=1-a_3f^6$, with $f(0)=0$ and $f'(0)=1$, gives
\[
 \begin{aligned}
  f(z)&=z-\frac{a_3}{14}z^7+\frac{5a_3^2}{728}z^{13}+O(z^{19}),\\
  Q_*(z)&=1+\frac{a_3}{14}z^6
            -\frac{9a_3^2}{5096}z^{12}+O(z^{18}).
 \end{aligned}
\]
After substituting $z=\sqrt{u_i}$, a term of cohomological degree $24$
in $\prod_iQ_*(\sqrt{u_i})$ comes either from $u_i^6$ in one factor or
from $u_i^3u_j^3$ in two distinct factors. Thus the coefficient of $a_3^2$
in that characteristic class is
\[
 -\frac{9}{5096}\sum_i u_i^6
 +\frac{1}{196}\sum_{i<j}u_i^3u_j^3.
\]
The second summand is a product of two degree-$12$ contributions. It must
be retained in dimension $24$, unlike in the preceding proof.
Since
\[
 s_3(p)^2=s_6(p)+2\sum_{i<j}u_i^3u_j^3,
\]
we obtain, for every closed oriented $24$-manifold $M$,
\begin{equation}\label{eq:abp-a3-square}
 [a_3^2]\Bc(M)
 =\frac{13\bigl\langle s_3(p)^2,[M]\bigr\rangle
             -22\bigl\langle s_6(p),[M]\bigr\rangle}{5096}.
\end{equation}
Both classes in the numerator have cohomological degree $24$.
The symmetric polynomial $\sum_{i<j}u_i^3u_j^3$ is an integral polynomial
in the Pontryagin classes. Consequently,
\[
 s_3(p)^2\equiv s_6(p)\pmod2.
\]
For $M=\Sigma_{\mathrm{ABP}}$, both characteristic numbers in
\eqref{eq:abp-a3-square} are therefore odd. Its numerator is odd, whereas
$5096=2^3\cdot7^2\cdot13$. Hence
\[
 \nu_2\!\left([a_3^2]\Bc(\Sigma_{\mathrm{ABP}})\right)=-3.
\]
The oriented extension takes values in
$\mathbb Z[1/2,a_1,a_2,a_3]$, so no odd prime remains in the reduced
denominator. This coefficient therefore has exact denominator $8$, and
\[
 \Bc(\Sigma_{\mathrm{ABP}})\notin\mathbb Z[a_1,a_2,a_3].
\]
This proves that $24$ is the first real dimension in which integrality can
fail on spin manifolds.

The class $[\Sigma_{\mathrm{ABP}}]$ is also undecomposable in spin
cobordism, even after rationalization: the number
$\langle s_6(p),[M]\rangle$ vanishes on every product of positive-dimensional
manifolds of total dimension $24$, since the power sum is additive under
Whitney sums and each factor has dimension less than $24$.
By contrast, every decomposable spin class in dimension $24$ has integral
Buchstaber genus by \cref{thm:bc-spin-below24} and multiplicativity.
\end{example}

\printbibliography[heading=bibliography]

\bigskip

\noindent\begin{minipage}{\linewidth}
\noindent
{\itshape Victor Buchstaber}\\
Steklov Mathematical Institute of Russian Academy of Sciences\\
Email: \texttt{buchstab@mi-ras.ru}

\vspace{1.5em}

\noindent
{\itshape Mikhail Kornev}\\
HSE University\\
Email: \texttt{mkorneff@mi-ras.ru}

\end{minipage}


\end{document}